\documentclass[a4paper,10pt,twoside]{article}
\usepackage[english]{babel}

\usepackage{psfrag}
\usepackage{amsmath, amsthm, amssymb, amsfonts}
\usepackage{graphicx}
\usepackage{rotating}
\usepackage{multirow}
\usepackage{multicol}
\usepackage{relsize}
\usepackage{geometry}
\geometry{a4paper,top=3cm,bottom=3cm,left=2.5cm,right=2.5cm,heightrounded}


\usepackage{float}
\usepackage{mathtools}
\usepackage{multicol}

\newcommand{\Ed}{E_D}
\newcommand{\Pd}{P_D}
\newcommand{\mN}{\mathcal{N}}
\newcommand{\pO}{\partial \Omega}

\newcommand{\bilformj}[2]{a^{(j)} \left( #1, \ #2 \right)}
\newcommand{\bilformi}[2]{a_i \left( #1, \ #2 \right)}
\newcommand{\bilforme}[2]{a_e \left( #1, \ #2 \right)}
\newcommand{\bilformiloc}[2]{a_i ^{(j)} \left( #1, \ #2 \right)}
\newcommand{\bilformeloc}[2]{a_e ^{(j)} \left( #1, \ #2 \right)}
\newcommand{\Gammaj}{\Gamma^{(j)}}
\newcommand{\harmext}[2]{\mathcal{H}^\Delta_{#1} #2}
\newcommand{\harmextie}[2]{\mathcal{H}^{i,e}_{#1} #2}

\newcommand{\innerprod}[2]{\left( #1, \ #2 \right)}
\newcommand{\interpol}[1]{I^h(#1)}
\newcommand{\norm}[2]{|| #1 ||_{L^2(#2)} }
\newcommand{\normtau}[1]{||| #1 |||_{\tau} }
\newcommand{\seminormHone}[2]{| #1 |_{H^1(#2)} }
\newcommand{\seminormHhalf}[2]{| #1 |_{H^{1/2}(#2)} }
\newcommand{\seminormS}[1]{| #1 |_{S_\Gamma} }
\newcommand{\seminormSj}[1]{| #1 |_{S_\Gamma^{(j)}} }
\newcommand{\seminormSjF}[1]{| #1 |_{S_\mathcal{F}^{(j)}} }
\newcommand{\seminormSkF}[1]{| #1 |_{S_\mathcal{F}^{(k)}} }
\newcommand{\seminormSjuno}[1]{| #1 |_{S_\Gamma^{(j_1)}} }

\newcommand{\Iapp}{I_{\text{app}}}
\newcommand{\Ion}{I_{\text{ion}}}
\newcommand{\skk}{s^{k}}
\newcommand{\sikk}{s_i^{k}}
\newcommand{\sekk}{s_e^{k}}
\newcommand{\silkk}{s_{i,l}^{k}}
\newcommand{\selkk}{s_{e,l}^{k}}

\newcommand{\dvdt}{\dfrac{\partial v}{\partial t}}
\newcommand{\dwdt}{\dfrac{\partial w}{\partial t}}
\newcommand{\sumdidv}{\sum_{l=1}^{N_h} \dfrac{\partial I_{\text{ion}}}{\partial v_l} (v^{k-1})}
\newcommand{\sumdidve}{\sum_{l=1}^{N_h} \dfrac{\partial I_{\text{ion}}}{\partial v_l} (v)}
\newcommand{\pIon}{\dfrac{\partial I_{\text{ion}}}{\partial v_l} (v^k)}

\newcommand{\sumJN}[1]{\sum_{j = 1}^N #1}

\newcommand{\sumNx}[1]{\sum_{k \in \mathcal{N}_x} #1 }

\newcommand{\diej}{ \delta_j^{i,e \ \dagger}}

\newcommand{\RF}{R_{\mathcal{F}}}
\newcommand{\RE}{R_{\mathcal{E}}}

\newcommand{\SjF}{S_{\mathcal{F}}^{(j)}}
\newcommand{\SkF}{S_{\mathcal{F}}^{(k)}}
\newcommand{\Sj}{S^{(j)}}
\newcommand{\Sk}{S^{(k)}}
\newcommand{\SjallE}{S_{\mathcal{E}}^{(j_{123})}}
\newcommand{\SjunoE}{S_{\mathcal{E}}^{(j_1)}}
\newcommand{\SjdueE}{S_{\mathcal{E}}^{(j_2)}}
\newcommand{\SjtreE}{S_{\mathcal{E}}^{(j_3)}}

\newcommand{\uiejF}{u_{j, \mathcal{F}}^{i,e}}
\newcommand{\uiekF}{u_{k, \mathcal{F}}^{i,e}}

\newcommand{\uFj}{u_{j, \mathcal{F}}}
\newcommand{\uFk}{u_{k, \mathcal{F}}}

\newcommand{\uEj}{u_{j, \mathcal{E}}}
\newcommand{\uEjuno}{u_{j_1, \mathcal{E} } }
\newcommand{\uEjdue}{u_{j_2, \mathcal{E}}}
\newcommand{\uEjtre}{u_{j_3, \mathcal{E}}}
\newcommand{\umeanE}{\bar{u}_\mathcal{E} }
\newcommand{\uiemeanE}{\bar{u}^{i,e}_\mathcal{E} }

\newcommand{\uiemeanEj}{\bar{u}^{i,e}_{j,\mathcal{E} } }
\newcommand{\uiemeanEk}{\bar{u}^{i,e}_{k,\mathcal{E} } }
\newcommand{\uiemeanEjuno}{\bar{u}^{i,e}_{j_1,\mathcal{E} } }
\newcommand{\umeanF}{\bar{u}_\mathcal{F}}

\newcommand{\umeanieFj}{\bar{u}^{i,e}_{j,\mathcal{F}} }
\newcommand{\umeanieFk}{\bar{u}^{i,e}_{k,\mathcal{F}} }
\newcommand{\umeanFie}{\bar{u}_\mathcal{F}^{i,e}}

\newtheorem{theorem}{Theorem}[section]

\newtheorem{lemma}{Lemma}[section]
\newtheorem{proposition}{Proposition}[section]
\newtheorem{remark}{Remark}[section]


\title{Parallel Newton-Krylov-BDDC and FETI-DP deluxe solvers for implicit time discretizations of the cardiac Bidomain equations}

\author{
	Ngoc Mai Monica Huynh
	\thanks{Dipartimento di Matematica, Universit\`a degli Studi di Pavia, 
		Via Ferrata, 27100 Pavia, Italy.
		E-mail: {\sf ngocmaimonica.huynh@unipv.it},
		{\sf luca.pavarino@unipv.it}.
		This work was supported by grants of MIUR (PRIN 2017AXL54F\_002),
		Istituto Nazionale di Alta Matematica (INDAM-GNCS) and
		the EuroHPC Joint Undertaking under grant agreement No 955495 (MICROCARD).
	}, \
	Luca F. Pavarino \footnotemark[1] \
	and Simone Scacchi
	\thanks{Dipartimento di Matematica, Universit\`a degli Studi di Milano,
		Via Saldini 50, 20133 Milano, Italy.
		E-mail: {\sf simone.scacchi@unimi.it}.
		This work was supported by grants of MIUR (PRIN 2017AXL54F\_003),
		Istituto Nazionale di Alta Matematica (INDAM-GNCS) and
		the EuroHPC Joint Undertaking under grant agreement No 955495 (MICROCARD).}
}

\begin{document}
	
	\maketitle
	\begin{center}
		\today
	\end{center}

	\begin{abstract}
		Two novel parallel Newton-Krylov Balancing Domain Decomposition by Constraints (BDDC) and Dual-Primal Finite Element Tearing and Interconnecting (FETI-DP) solvers with deluxe scaling are constructed, analyzed and tested numerically for implicit time discretizations of the three-dimensional Bidomain system of equations. 
		This model represents the most advanced mathematical description of the cardiac bioelectrical activity and it consists of a degenerate system of two non-linear reaction-diffusion partial differential equations (PDEs), coupled with a stiff system of ordinary differential equations (ODEs). 
		A finite element discretization in space and a segregated implicit discretization in time, based on decoupling the PDEs from the ODEs, yields at each time step the solution of a non-linear algebraic system.
		The Jacobian linear system at each Newton iteration is solved by a Krylov method, accelerated by BDDC or FETI-DP preconditioners, both augmented with the recently introduced {\em deluxe} scaling of the dual variables.
		A polylogarithmic convergence rate bound is proven for the resulting parallel Bidomain solvers. 
		Extensive numerical experiments on Linux clusters up to two thousands processors confirm the theoretical estimates, showing that the proposed parallel solvers are scalable and quasi-optimal. 
	\end{abstract}
	
	\pagestyle{myheadings}
	\thispagestyle{plain}
	\markboth{NMM Huynh, LF Pavarino and S Scacchi}{Newton-Krylov-Dual-Primal Bidomain solvers}
	
	\section{Introduction}
	The aim of this work is to design, analyze and numerically test Balancing Domain Decomposition by Constraints (BDDC) and Finite Element Tearing and Interconnecting dual-primal (FETI-DP) algorithms for a preconditioned Newton-Krylov solver for implicit time discretizations of the Bidomain system.
	This model describes the propagation of the electric impulse in cardiac tissue by means of a degenerate parabolic system of two non-linear reaction-diffusion partial differential equations (PDEs), modelling the evolution of the transmembrane electric potential \cite{franzone2014mathematical, pennacchio2005multiscale, quarteroni2017cardiovascular, quarteroni2017integrated}. These PDEs are coupled through the non-linear reaction term with a system of ordinary differential equations (ODEs), deriving from a membrane model describing the ionic currents flowing through the cell membrane and the dynamics of the associated gating variables.
	
	Several time discretizations have been proposed for these complex nonlinear cardiac models, employing different semi-implicit, operator splitting and decoupling techniques; see \cite[Ch. 7.2]{franzone2014mathematical} for a review.
	The most popular time discretizations have been based on semi-implicit (see e.g. \cite{charawi2017, colli2018numerical, franzone2015parallel, zampini2014dual, zampini2014inexact}) and/or operator splitting schemes (see e.g. \cite{sundness2006book,chen2017splitting, chen2019two}). These techniques have been largely preferred to fully implicit schemes such as the Bidomain monolithic solver proposed in \cite{murillo2004fully}. Fully implicit solvers can be computationally very expensive when the ionic model consists of very stiff and high-dimensional non-linear systems of ODEs 
	(e.g. \cite{luo1991model, ten2004model}). 
	On the other hand, operator splitting and decoupling techniques introduce additional errors which increase the time finite difference errors, see \cite[Ch. 7.2]{franzone2014mathematical} and the references therein. 
	As a possible compromise to balance computational effort and accuracy, we propose here a decoupling solution strategy based on a segregated implicit time discretization of our model, where at each time step we solve the ODEs system of the ionic model first, and then we solve and update the non-linear system arising from the discretized Bidomain equations.
	This strategy was studied previously in
	\cite{munteanu2009decoupled, munteanu2009scalable, scacchi2011multilevel}, where only overlapping one-level and Multilevel Schwarz preconditioners were considered.
	In this paper, we extend this solution strategy to two of the most efficient preconditioners currently available, the FETI-DP and BDDC dual-primal preconditioners with deluxe scaling.
	We recall that a practical advantage of these methods with respect to other domain decomposition preconditioners such as Multilevel Schwarz is that they can be easily extended to unstructured meshes, because they do not need the implementation of an inter-grid operator.
	
	FETI-DP methods were proposed by \cite{farhat2001feti} as an alternative to one-level or two-level FETI. They have been applied in several contexts, from three-dimensional elliptic problems with heterogeneous coefficients \cite{klawonn2002dual} to linear elasticity problems \cite{klawonn2006dual, rheinbach2006parallel}. In the biomechanics field, applications of FETI and FETI-DP have been extensively studied by \cite{augustin2014classical, brands2008modelling, klawonn2010highly, zampini2014dual}.
	BDDC were introduced by \cite{dohrmann2003preconditioner} as an alternative to FETI-DP for scalar elliptic problems and then analyzed by \cite{mandel2003convergence, mandel2005algebraic}. Among other applications, BDDC has been applied to the linearized (semi-implicit) Bidomain system in
	\cite{zampini2014dual, zampini2014inexact} and to cardiac mechanics in
	\cite{colli2018numerical, pavarino2015newton}. 
	FETI-DP and BDDC have been proven to be spectrally equivalent (\cite{li2006feti, mandel2005algebraic}). 
	The other main family of domain decomposition methods, based on the Overlapping Schwarz method, has also been applied to the Bidomain system, see \cite{PS2008,scacchi2008,charawi2017} for semi-implicit time discretizations and \cite{munteanu2009decoupled, munteanu2009scalable, scacchi2011multilevel} for segregated implicit time discretizations.
	
	Our main contribution consists in a novel theoretical estimate for the condition number of the preconditioned operator using the recent deluxe scaling introduced in \cite{dohrmann2016bddc}, for the solution of the non-linear system arising from a fully implicit discretization of the decoupled cardiac electrical model.
	We also present the results of extensive numerical tests confirming our optimality bound and the scalability of the proposed solver. Robustness of and computational equivalence between the proposed dual-primal algorithms are shown, thus encouraging further investigations with more complex realistic geometries and the development of monolithic solvers for electro-mechanical models.
	
	The rest of the paper is structured as follows. In Section \ref{cardiacmodel}, we briefly introduce the Bidomain model describing the electrical activity in the cardiac tissue. In Section \ref{numericalmethods}, we formulate the space discretization and 
	the time decoupling strategy. An overview of non-overlapping Domain Decomposition (DD) spaces and objects follows and an excursus of FETI-DP and BDDC preconditioners concludes Section \ref{ddprecond}. The novel convergence rate estimate is 
	proved in Section \ref{convrate}, while the results of several parallel numerical tests in three dimensions are presented in Section \ref{results}.

	\section{The cardiac Bidomain model}	\label{cardiacmodel}
	We consider here the non-linear parabolic reaction-diffusion system, describing the propagation of electrical signal through the cardiac tissue. 
	
	In the Bidomain system (\cite{franzone2014mathematical, pennacchio2005multiscale}), the cardiac tissue is represented as two interpenetrating domains, the intra- and extracellular domains, coexisting at every point of the cardiac tissue and connected by a distributed continuous cellular membrane, which (as the intra- and extracellular domains) fills the complete volume. 
	Cardiac cells are arranged in fibers set as laminar sheets running counterclockwise from the epicardium to the endocardium (see \cite{legrice1995laminar,franzone2014mathematical} for more details).
	In this way, at each point $\mathbf{x}$ of the cardiac domain $\Omega\subset \mathbb{R}^3$ it is possible to define an orthonormal triplet of vectors $\mathbf{a}_l(\mathbf{x})$, $\mathbf{a}_t(\mathbf{x})$ and $\mathbf{a}_n(\mathbf{x})$ parallel to the local fiber direction, tangent and orthogonal to the laminar sheets, respectively.
	
	We define the conductivity tensors of the two media as $D_{i,e}(\mathbf{x}) = \sum_{\bullet = \left\{l, t, n\right\} } \sigma_\bullet^{i,e} \mathbf{a}_\bullet (\mathbf{x})$, where $\sigma_\bullet^{i,e}$ are conductivity coefficients in the intra- and extracellular domain along the corresponding direction $\mathbf{a}_\bullet$, with $\bullet = {l, t, n}$. In our analysis, we assume here that these coefficients are constant in space. 
	By defining $u_{i,e}: \Omega \times (0, T) \rightarrow \mathbb{R}$ the intra- and extracellular electric potential, $v = u_i - u_e$ the transmembrane potential and by $w: \Omega \times (0, T) \rightarrow \mathbb{R}^{N_w}$, the gating and ionic concentration variables, the parabolic-parabolic formulation of the Bidomain system reads:
	\begin{eqnarray}\label{bido}
		\begin{cases}
			\chi C_m  \dvdt - \text{div} \left( D_i \cdot \nabla u_i \right) + \Ion (v,w ) = 0	  & \text{in } \Omega \times ( 0,T ),	\\
			-\chi C_m  \dvdt - \text{div} \left( D_e \cdot \nabla u_e \right) - \Ion (v,w) = - \Iapp^e	  &  \text{in } \Omega \times ( 0,T ),	\\
			\dwdt - R(v,w) = 0, 		&\text{in } \Omega \times ( 0,T ), \\
			v(x,t) = u_i(x,t) - u_e(x,t) 		&\text{in } \Omega \times ( 0,T ), \\
		\end{cases}
	\end{eqnarray}
	with initial values $ v(\mathbf{x},0) =   u_i(\mathbf{x},0) - u_e(\mathbf{x},0)$, $w(\mathbf{x},0) = w_0(\mathbf{x}) $, where $\chi$ is the ratio of membrane area per tissue volume and $C_m$ is the surface capacitance. 
	Assuming that the heart is immersed in a non-conductive medium, we require zero-flux boundary conditions $	\textbf{n}^T D_{i,e} \nabla u_{i,e}  = 0$ on $\pO \times ( 0,T )$ and compatibility condition $\int_{\Omega} \Iapp^e dx = 0$, where $\Iapp^{e}: \Omega \times (0, T) \rightarrow \mathbb{R}$ is the extracellular applied current. 
	Existence, uniqueness and regularity results for (\ref{bido}) have been extensively studied, see for example \cite{franzone2002degenerate}. 
	
	The ionic model describing the ionic currents flowing through the cell membrane is defined by the terms $\Ion$ and $R(v,w)$ in (\ref{bido}). In this work, we consider the very simple (yet macroscopically reliable) Rogers-McCulloch (RMC) ionic model (\cite{rogers1994collocation}), with only one gating variable and
	\begin{eqnarray}\nonumber
		\Ion (v,w)  = G \ v \left( 1 - \frac{v}{v_{th}}  \right) \left( 1 - \frac{v}{v_{p}}  \right) + \eta_1 v w,
		\qquad 	R(v,w) = \eta_2 \left( \frac{v}{v_p} - w \right),
	\end{eqnarray}
	where $G$, $v_{th}$, $v_p$, $\eta_1$ and $\eta_2$ are constant coefficients. \\

	\section{Numerical Methods} 		\label{numericalmethods}
	{\bf a) Space discretization.}
	The cardiac domain $\Omega$ is discretized by a structured quasi-uniform grid of hexahedral isoparametric $Q_1$ elements of maximal diameter $h$. Let $V_h \subset V$ be the associated finite element space, with the same basis functions $\left\{ \varphi_p \right\}_{p=1}^{N_h}$ for both variables $u_{i,e}$ and $w$ and let $A_{i,e}$ and $M$ be the stiffness and mass matrices with entries
	\begin{eqnarray} \nonumber
		\left\{ A_{i,e} \right\}_{nm} = \int_{\Omega} \left(  \nabla \varphi_n  \right)^T D_{i,e} \cdot \nabla \varphi_m dx , 	\qquad 		\left\{ M \right\}_{nm} = \int_{\Omega} \varphi_n \varphi_m dx.
	\end{eqnarray}
	For our purposes, the mass matrix is obtained with the usual mass-lumping technique.
	We denote by $\mathbf{u}_{i,e}$, $\mathbf{v}$, $\mathbf{w}$, $\mathbf{\Ion}$ and $\mathbf{\Iapp^{i,e}}$ the coefficient vectors from the discretization of $u_{i,e}$, $v$, $w$, $\Ion$ and $\Iapp^{i,e}$, respectively. 
	With these choices, we thus need to solve the semi-discrete Bidomain system
	\begin{eqnarray}
		\begin{cases}
			\chi C_m \mathcal{M} \frac{\partial}{\partial t} 
			\begin{bmatrix}
				\mathbf{u}_i \\ \mathbf{u}_e
			\end{bmatrix}
			+ \mathcal{A} 
			\begin{bmatrix}
				\mathbf{u}_i \\ \mathbf{u}_e
			\end{bmatrix}
			+ 
			\begin{bmatrix}
				M \ \mathbf{\Ion} (\mathbf{v}, \mathbf{w}) \\ -M \ \mathbf{\Ion} (\mathbf{v}, \mathbf{w}) 
			\end{bmatrix}
			= 
			\begin{bmatrix}
				\mathbf{0} \\ - M \ \mathbf{\Iapp^e}
			\end{bmatrix},
			\\
			\frac{\partial \mathbf{w}}{\partial t} = R \left( \mathbf{v}, \mathbf{w} \right),
		\end{cases} 
	\end{eqnarray}
	where $\mathcal{A}$ and $\mathcal{M}$ are the stiffness and mass block-matrices
	$	\displaystyle\mathcal{A} =
	\begin{bmatrix}
		A_i 	& 0 	\\
		0 		& A_e
	\end{bmatrix},
	\ \ \ \
	\mathcal{M} = 
	\begin{bmatrix}
		M 		& -M 	\\
		-M 		& M
	\end{bmatrix}.
	$\\
	
	{\bf b) Decoupled implicit time discretization.}
	Fully implicit discretizations in time of the Bidomain system coupled with ionic models while using physiological coefficients, lead to the solution of non-linear problems at each time step, which can be very expensive from a computational point of view: indeed, realistic ionic models are very complex and can present up to fifty non-linear ODEs 
	(see e.g. \cite{luo1991model, ten2004model}). 
	Few attempts in this direction have been done by using simple ionic models (e.g. \cite{murillo2004fully}).
	In the literature, common alternatives consider implicit-explicit (IMEX) schemes and/or operator splitting, where the diffusion terms are treated separately from the reaction (e.g. \cite{chen2017splitting, chen2019two, colli2018numerical, zampini2014dual}); see also \cite[Ch. 7.2]{franzone2014mathematical} and the references therein.
	
	Instead, we choose here to decouple the gating variable $w$ from the intra- and extracellular potentials $u_i$ and $u_e$ as in \cite{munteanu2009decoupled}.  
	At each time step, this decoupled Backward Euler strategy consists of two more sub-steps.
	\begin{itemize}
		\item {\bf Step 1}: update gating and ionic variables. 
		Given $\mathbf{u}_{i,e}^n$ (hence $\mathbf{v}^n$) at the previous time step $t_n$, compute $\mathbf{w}^{n+1}$ by solving the membrane model
		\begin{eqnarray}\nonumber
			\mathbf{w}^{n+1} - \tau \mathbf{R} (\mathbf{v}^n, \mathbf{w}^{n+1}) = \mathbf{w}^n,
		\end{eqnarray}
		where $\tau = t_{n+1} - t_n$ is the current time step.
		
		\item {\bf Step 2}: solve the Bidomain system.
		Given $\mathbf{u}_{i,e}^n$ at the previous time step and given $\mathbf{w}^{n+1}$, calculate $\mathbf{u}^{n+1} = (\mathbf{u}_i^{n+1}, \mathbf{u}_e^{n+1})$ by solving the non-linear equation $\mathbf{F} (\mathbf{u}^{n+1}) = \mathbf{G}$ derived from the Backward Euler scheme applied to the Bidomain equations, where
		\begin{eqnarray}\nonumber
			\!\!\!\!\!\mathbf{F}(\mathbf{u}^{n+1}) = \left( \chi C_m \mathcal{M} + \tau \mathcal{A} \right) 
			\begin{bmatrix}
				\mathbf{u}_i^{n+1} \\
				\mathbf{u}_e^{n+1}
			\end{bmatrix}
			+ \tau
			\begin{bmatrix}
				M \mathbf{\Ion}(\mathbf{v}^{n+1}, \mathbf{w}^{n+1}) \\
				-M \mathbf{\Ion}(\mathbf{v}^{n+1}, \mathbf{w}^{n+1}) 
			\end{bmatrix},
			\ \ \ \ \ \ 		\mathbf{G} =  \chi C_m \mathcal{M} 
			\begin{bmatrix}
				\mathbf{u}_i^n \\
				\mathbf{u}_e^n
			\end{bmatrix}
			+ \tau
			\begin{bmatrix}
				\mathbf{0} \\
				-M \mathbf{\Iapp^e} 
			\end{bmatrix}.
		\end{eqnarray}
	\end{itemize}	
	
	The strategy proposed here for the solution of the non-linear system in Step 2 consists in a Newton-Krylov approach,  as in \cite{munteanu2009decoupled, munteanu2009scalable, scacchi2011multilevel}, where the decomposition of the problem is made after the linearization:  a Newton scheme is applied as outer iteration and the Jacobian linear system arising at each Newton step is solved by a Krylov method
	with a dual-primal preconditioner.\\
	
	\indent {\bf c) Newton scheme and properties of the Jacobian bilinear form.}	
	The outer Newton iteration can be summarized as
	\begin{itemize}
		\item[-] choose a starting value $\mathbf{u}^0 = (\mathbf{u}^0_i, \mathbf{u}^0_e)$;
		\item[-] for $k \geq 0$ solve the Jacobian linear system
		\begin{eqnarray}\label{Jacobiansystem}
			\mathbf{JF}^k \mathbf{s}^k = - \mathbf{F} (\mathbf{u}^k)
		\end{eqnarray}
		until a Newton stopping criterion is satisfied, where $\mathbf{s}^k = (\mathbf{s}^k_i, \mathbf{s}^k_e )$ is the Newton correction at step $k$ and $\mathbf{JF}^k$ is the Jacobian of $\mathbf{F}$ computed in $\mathbf{u}^k$.
		\item[-] update: $\mathbf{u}^{k+1} = \mathbf{u}^k + \mathbf{s}^k$.
	\end{itemize}
	For our theoretical purposes, we need to associate to problem (\ref{Jacobiansystem}) the bilinear form 
	\begin{equation*}
		a (\skk, \phi) := \chi C_m \innerprod{\sikk\!-\!\sekk}
		{\varphi_i\!-\!\varphi_e}+\tau \bilformi{\sikk}{\varphi_i}+
		\tau \bilforme{\sekk}{\varphi_e} 
		+\tau \innerprod{\sumdidv \left(\silkk\!-\!\selkk \right)\psi_l}{\varphi_i\!-\!\varphi_e}\!.
	\end{equation*}
	for all $ \skk = \left( \sikk, \sekk \right) \in V_h \times V_h$ and $\phi = \left( \varphi_i, \varphi_e \right) \in V_h \times V_h$, being $\psi_l$ the $l$-th nodal basis function, where $a_{i,e}(\cdot,\cdot)$ are the bilinear forms associated with the diffusion terms and $\innerprod{\cdot}{\cdot}$ denotes the usual $L^2$-inner product.
	As in \cite{munteanu2009decoupled}, it is possible to show that this bilinear form associated to the Jacobian linear system is continuous and coercive with respect to the following norm defined $\forall u = (u_i, u_e) \in V_h \times V_h$
	\begin{equation*}
		\normtau{u}^2 := (1 + \tau) \norm{u_i - u_e}{\Omega}^2 + \tau \bilformi{u_i}{u_i} + \tau \bilforme{u_e}{u_e}.
	\end{equation*}
	\begin{lemma}
		Assume that 
		\begin{equation*}
			\chi C_m + \tau \pIon \geq c > 0, \qquad c \in \mathbb{R}^+,
		\end{equation*}
		holds $\forall l = 1, \dots, N_h$ and for all $k$.
		Then the bilinear form $a(\cdot,\cdot)$ is continuous and coercive with respect to the norm $\normtau{\cdot}$.
	\end{lemma}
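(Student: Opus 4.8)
The plan is to prove both properties termwise, after combining the two zeroth-order contributions of $a(\cdot,\cdot)$. Writing $\skk=(\sikk,\sekk)$ and $\phi=(\varphi_i,\varphi_e)$, I would first use the mass-lumping of $M$ to express both the capacitive term $\chi C_m\innerprod{\sikk-\sekk}{\varphi_i-\varphi_e}$ and the reaction term $\tau\innerprod{\sumdidv(\silkk-\selkk)\psi_l}{\varphi_i-\varphi_e}$ as diagonal nodal sums against the positive lumping weights $m_l$. The two then fuse into the single weighted product $\sumLNnod{m_l\left[\chi C_m+\tau\tfrac{\partial I_{\text{ion}}}{\partial v_l}\right](\silkk-\selkk)(\varphi_{i,l}-\varphi_{e,l})}$, which is exactly the object on which the hypothesis $\chi C_m+\tau\pIon\geq c>0$ acts (evaluated at the current linearization state).

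For coercivity I would set $\phi=\skk$. The fused zeroth-order term becomes $\sumLNnod{m_l[\chi C_m+\tau\partial I_{\text{ion}}/\partial v_l](\silkk-\selkk)^2}\geq c\,\norm{\sikk-\sekk}{\Omega}^2$ by the hypothesis and positivity of the $m_l$, while the diffusion terms $\tau\bilformi{\sikk}{\sikk}$ and $\tau\bilforme{\sekk}{\sekk}$ already appear verbatim and nonnegatively in $\normtau{\skk}^2$. Comparing with $\normtau{\skk}^2=(1+\tau)\norm{\sikk-\sekk}{\Omega}^2+\tau\bilformi{\sikk}{\sikk}+\tau\bilforme{\sekk}{\sekk}$ then yields $a(\skk,\skk)\geq\min\!\left(\tfrac{c}{1+\tau},1\right)\normtau{\skk}^2$, a strictly positive lower bound for any finite $\tau$.

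For continuity I would estimate each of the four terms by $\normtau{\skk}\,\normtau{\phi}$ up to a constant. The two diffusion terms follow from the Cauchy--Schwarz inequality for the symmetric positive semidefinite forms $a_{i,e}$, e.g. $\tau\bilformi{\sikk}{\varphi_i}\leq[\tau\bilformi{\sikk}{\sikk}]^{1/2}[\tau\bilformi{\varphi_i}{\varphi_i}]^{1/2}\leq\normtau{\skk}\,\normtau{\phi}$, each factor being dominated by the respective norm. The capacitive and reaction terms are handled by Cauchy--Schwarz in the lumped $L^2$ product together with $\norm{\sikk-\sekk}{\Omega}\leq(1+\tau)^{-1/2}\normtau{\skk}$ and a uniform bound $|\partial I_{\text{ion}}/\partial v_l|\leq L$; since $\tau/(1+\tau)\leq1$ and $\chi C_m/(1+\tau)\leq\chi C_m$, the $\tau$-dependence is absorbed and the continuity constant is $\tau$-independent.

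The main obstacle is the reaction term: being neither sign-definite nor symmetric like the diffusion forms, it can only be controlled by fusing it with the capacitive term at the nodal level through mass-lumping and then invoking the sign hypothesis. This fusion also forces a uniform upper bound on $|\partial I_{\text{ion}}/\partial v_l|$ in the continuity estimate; for the Rogers--McCulloch nonlinearity $\partial I_{\text{ion}}/\partial v$ is a quadratic in $v$ plus $\eta_1 w$, so such a bound is available once the linearization state is confined to the physiological range, which I would record as a standing assumption. A secondary check is that the capacitive and reaction terms are lumped consistently, so that their fusion is exact rather than merely spectrally equivalent.
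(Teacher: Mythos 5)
Your proof is correct: the paper itself gives no proof of this lemma, deferring to \cite{munteanu2009decoupled}, and your argument --- fusing the lumped capacitive and reaction terms into a single nodal sum, invoking the sign hypothesis for coercivity with constant $\min\left(c/(1+\tau),1\right)$, and using Cauchy--Schwarz together with a uniform bound on $\partial I_{\text{ion}}/\partial v$ for continuity --- is essentially the argument of that reference. Your standing assumption of a physiological bound on the derivative is indeed needed beyond the stated hypothesis (which supplies only the lower bound) and corresponds precisely to the constants $K_M$, $K_m$ that the paper introduces without comment in the subsequent ellipticity lemma, as does your check that the exactness of the nodal fusion rests on the paper's consistent use of the lumped mass matrix.
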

	
	\begin{remark}
		\emph{We do observe that the above hypothesis of non-negativity is always satisfied for any time step $\tau \leq 0.37$ ms, using the Rogers-McCulloch ionic model. Indeed, numerical computations of $\chi C_m + \tau \frac{\partial I_\text{ion}}{\partial v}$ validate this assumption (see Fig. (\ref{hypplot})).} 
	\end{remark}
	
	\begin{figure}[!h]
		\centering
		\includegraphics[scale=.6]{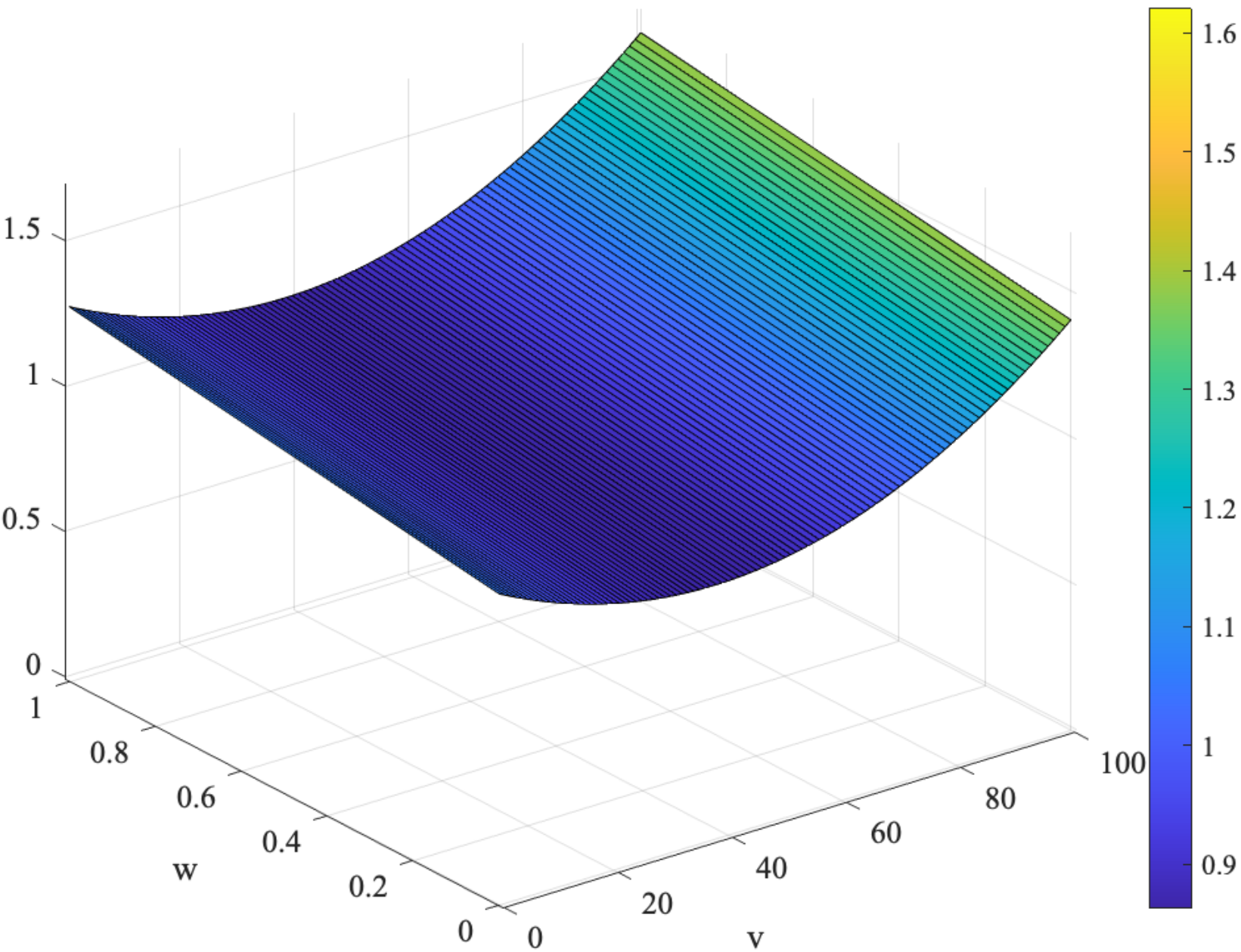}
		\caption{Surface plot of $\chi C_m + \tau \frac{\partial I_\text{ion}}{\partial v}$, with $C_m = 1 \frac{mF}{cm^3}$, $\chi = 1$ and $\tau=0.05$ ms, which are values usually employed in numerical experiments.}
		\label{hypplot}
	\end{figure}
	
	As an immediate consequence of the continuity and coercivity of the bilinear form $a (\cdot,\cdot)$, the following Lemma holds.
	We drop the index $k$ from now on, unless an explicit ambiguity occurs.
	\begin{lemma}\label{ellipbound}
		Assuming that the conductivity coefficients are constant in space, the bilinear form $a (\cdot, \cdot)$ satisfies the bounds
		\begin{equation*}
			\begin{aligned}
				a (s,s) \leq \left( \chi C_m + \tau K_M \right) \norm{s_i - s_e}{\Omega}^2 + \tau \sigma^i_M \seminormHone{s_i}{\Omega}^2 + \tau \sigma^e_M \seminormHone{s_e}{\Omega}^2, \\
				a (s, s) \geq \left( \chi C_m + \tau K_m \right) \norm{s_i - s_e}{\Omega}^2 + \tau \sigma^i_m \seminormHone{s_i}{\Omega}^2 + \tau \sigma^e_m \seminormHone{s_e}{\Omega}^2,
			\end{aligned}
		\end{equation*}
		where 
		\begin{equation*}
			\sigma^{i,e}_M = \max_{\bullet = \left\{ l, t, n \right\}} \sigma^{i,e}_\bullet, 		\qquad \qquad  \sigma^{i,e}_m = \min_{\bullet = \left\{ l, t, n \right\}} \sigma^{i,e}_\bullet,
		\end{equation*}
		and $K_M$, $K_m$ independent from the subdomain diameter $H$ and the mesh size $h$.
	\end{lemma}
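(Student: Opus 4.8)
The plan is to test the Jacobian bilinear form against $\phi = s$ and split the resulting expression into a pure diffusion contribution and a combined mass/reaction contribution, estimating each in turn. Setting $\varphi_i = s_i$, $\varphi_e = s_e$ and dropping the Newton index $k$, the bilinear form becomes
\begin{equation*}
a(s,s) = \chi C_m \norm{s_i - s_e}{\Omega}^2 + \tau \bilformi{s_i}{s_i} + \tau \bilforme{s_e}{s_e} + \tau \innerprod{\sumdidve (s_{i,l} - s_{e,l}) \psi_l}{s_i - s_e},
\end{equation*}
so that the first and last terms are driven solely by the transmembrane correction $s_i - s_e$, while the two middle terms are the diffusion energies.

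For the diffusion terms I would use the constant-in-space assumption: the conductivity tensors $D_{i,e}$ are then symmetric positive definite with eigenvalues exactly equal to the coefficients $\sigma_\bullet^{i,e}$ and orthonormal eigenvectors $\mathbf{a}_\bullet$. Hence their Rayleigh quotient is pinched between $\sigma^{i,e}_m$ and $\sigma^{i,e}_M$, giving pointwise $\sigma^{i,e}_m |\nabla s_{i,e}|^2 \le (\nabla s_{i,e})^T D_{i,e} \nabla s_{i,e} \le \sigma^{i,e}_M |\nabla s_{i,e}|^2$. Integrating over $\Omega$ yields
\begin{equation*}
\sigma^{i,e}_m \seminormHone{s_{i,e}}{\Omega}^2 \le a_{i,e}(s_{i,e},s_{i,e}) \le \sigma^{i,e}_M \seminormHone{s_{i,e}}{\Omega}^2,
\end{equation*}
which, after multiplication by $\tau$, supplies the two $H^1$-seminorm terms appearing in both the upper and the lower bound.

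For the combined mass/reaction term I would set
\begin{equation*}
K_M := \max_{l = 1, \dots, N_h} \frac{\partial \Ion}{\partial v_l}(v), \qquad K_m := \min_{l = 1, \dots, N_h} \frac{\partial \Ion}{\partial v_l}(v),
\end{equation*}
and exploit the mass-lumping. Since the lumped mass matrix is diagonal with positive entries $m_l$, the pairing $\innerprod{\sumdidve (s_{i,l} - s_{e,l}) \psi_l}{s_i - s_e}$ collapses to the weighted nodal sum $\sum_l \frac{\partial \Ion}{\partial v_l}(v)(s_{i,l} - s_{e,l})^2 m_l$, whereas $\norm{s_i - s_e}{\Omega}^2 = \sum_l (s_{i,l} - s_{e,l})^2 m_l$. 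Bounding the scalar weights above by $K_M$ and below by $K_m$ then gives
\begin{equation*}
K_m \norm{s_i - s_e}{\Omega}^2 \le \innerprod{\sumdidve (s_{i,l} - s_{e,l}) \psi_l}{s_i - s_e} \le K_M \norm{s_i - s_e}{\Omega}^2,
\end{equation*}
and adding the mass term $\chi C_m \norm{s_i - s_e}{\Omega}^2$ produces the factor $(\chi C_m + \tau K_m)$ in the lower bound and $(\chi C_m + \tau K_M)$ in the upper bound. Assembling this with the diffusion estimate yields the claimed two-sided bound.

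I expect the only point needing genuine care to be the mass/reaction term: it is precisely the mass-lumping that diagonalizes the pairing and turns it into a weighted sum of squared nodal differences, so that the same scalar weights $\frac{\partial \Ion}{\partial v_l}(v)$ multiply exactly the quantities $(s_{i,l} - s_{e,l})^2 m_l$ that assemble $\norm{s_i - s_e}{\Omega}^2$; without lumping the off-diagonal cross terms arising from $\psi_l \psi_m$ would obstruct this clean comparison. Finally, because $K_M$ and $K_m$ are pointwise extrema of the ionic-current derivative evaluated at the (physiologically bounded) nodal states, they are controlled independently of the subdomain diameter $H$ and the mesh size $h$, as asserted.
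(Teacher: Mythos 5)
Your proof is correct and is essentially the argument the paper itself relies on: the paper states this lemma without an explicit proof, presenting it as an immediate consequence of the continuity and coercivity result following \cite{munteanu2009decoupled}, and your two-step splitting --- pointwise Rayleigh-quotient bounds $\sigma^{i,e}_m |\nabla s_{i,e}|^2 \le (\nabla s_{i,e})^T D_{i,e} \nabla s_{i,e} \le \sigma^{i,e}_M |\nabla s_{i,e}|^2$ for the constant-coefficient diffusion energies, plus nodal two-sided bounds on $\partial \Ion / \partial v_l$ for the mass-lumped reaction pairing --- is exactly the standard route, including your correct observation that lumping is what kills the off-diagonal $\psi_l\psi_m$ cross terms. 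The one small point worth recording is that the lemma's bounds are stated in the exact $L^2$-norm while your reaction estimate naturally produces the lumped norm $\sum_l m_l (s_{i,l}-s_{e,l})^2$; for $Q_1$ elements these are spectrally equivalent with $h$-independent constants, so the discrepancy is absorbed into $K_M$ and $K_m$ (taking care with the sign of $K_m$, which may be negative, when rescaling), consistent with the paper's claim that these constants are independent of $H$ and $h$.
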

	
	\begin{remark}
		\emph{This result is extensible to the case of conductivity coefficients almost constant over each subdomain.}
	\end{remark}

	\section{Dual-primal preconditioners for Newton-Krylov solvers} \label{ddprecond}
	
	\subsection{ Non-overlapping dual-primal spaces} 
	Let $\Omega_j$, $j = 1, \dots, N$, be a decomposition of the cardiac domain $\Omega$, into non-overlapping subdomains (or substructures).  This decomposition forms a partition of $\Omega$, such that $\overline{\Omega} = \cup_{j=1}^{N} \overline{\Omega}_j$, $ \Omega_j \cap \Omega_k = \emptyset $ if $ j \neq k$ and the intersection of the boundaries $ \pO_j \cap \pO_k $ is either empty, a vertex, an edge or a face. 
	Each subdomain is a union of shape-regular conforming finite elements.
	The interface $\Gamma$ is the set of points that belong to at least two subdomains,
	\begin{equation*}
		\Gamma := \bigcup\limits_{j \neq k} \pO_j \cap \pO_k, 	\qquad \qquad 	\Gammaj = \pO_j \cap \Gamma.
	\end{equation*} 
	We assume that subdomains are shape regular and have a typical diameter of size $H_j$ whereas the finite elements are of diameter $h$; we denote by $H = \max_j H_j$. 
	Let $W^{(j)} = V_h(\Omega_j) \times V_h(\Omega_j)$ be the associated local finite element spaces. We partition $W^{(j)}$ into the interior part $W_I^{(j)}$ and the finite element trace space $W_\Gamma^{(j)}$, such that
	Note that we consider variables on the Neumann boundaries $\pO_N$ as interior to a subdomain. We introduce the product spaces 
	\begin{equation*}
		W = \prod_{j=1}^{N} W^{(j)}, \qquad \qquad W_\Gamma :=  \prod_{j=1}^{N} W_\Gamma^{(j)}.
	\end{equation*}
	Therefore we define $\widehat{W} \subset W$ as the subspace of functions of $W$, which are continuous in all interface variables between subdomains and similarly we denote by $\widehat{W}_\Gamma \subset W_\Gamma$, the subspace formed by the continuous elements of $W_\Gamma$. 
	In dual-primal methods, we iterate in the space $W$ while requiring continuity constraints (also called \textit{primal constraints}) to hold throughout the iterations. Primal constraints guarantee that each subdomain problem is invertible and that a good convergence bound can be obtained.
	We denote by $\widetilde{W}$ the space of finite element functions in $W$, which are continuous in all primal variables; clearly we have $\widehat{W} \subset \widetilde{W} \subset W$ and likewise $\widehat{W}_\Gamma \subset \widetilde{W}_\Gamma \subset W_\Gamma$.  
	Let $W_\Pi^{(j)} \subset W_\Gamma^{(j)}$ be the primal subspace of functions which are continuous across the interface and that will be subassembled between the subdomains sharing $\Gamma^{(j)}$. 
	Additionally, let \mbox{$W_\Delta^{(j)} \subset 	W_\Gamma^{(j)}$} contain the finite element functions (called dual) which can be 
	discontinuous across the interface and vanish at the primal degrees of freedom. 
	
	We then introduce the product subspaces 
	$W_\Pi = \prod_{j=1}^{N} W_\Pi^{(j)}$ and $W_\Delta = \prod_{j=1}^{N} W_\Delta^{(j)}$, from which $W_\Gamma = W_\Pi \oplus W_\Delta$.
	Using this notation, we can decompose $\widetilde{W}_\Gamma$ into a primal subspace $\widehat{W}_\Pi$ which has continuous elements only and a dual subspace $W_\Delta$ which contains finite element functions which are not continuous, i.e. we have $\widetilde{W}_\Gamma = \widehat{W}_\Pi \oplus W_\Delta$. 
	In this work, we will denote with subscripts $I$, $\Delta$ and $\Pi$ the interior, dual and primal variables, respectively. 
	
	In the non-overlapping framework, the global system matrix (see in this application Eq. (\ref{Jacobiansystem})) is never formed explicitly, but a local version with the same structure is assembled on each subdomain, by restricting the integration set from $\Omega$ to $\Omega_j$ by defining the local bilinear forms 
	\begin{equation*}
		\begin{split}
			a^{(j)} (s, \phi) = \chi C_m \innerprod{s_i - s_e}{\varphi_i - \varphi_e}_{|\Omega_j} + \tau \bilformiloc{s_i}{\varphi_i} + \tau \bilformeloc{s_e}{\varphi_e}\\
			\qquad \qquad \qquad + \tau \innerprod{\sumdidve \left( s_{i,l} - s_{e,l} \right) \varphi_l}{\varphi_i - \varphi_e}_{|\Omega_j},
		\end{split}
	\end{equation*}
	where $\innerprod{\cdot\ }{\cdot}_{|\Omega_j}$ denotes the restriction of the $L^2$-inner product on the $j$-th subdomain. 
	These definitions are admissible here, as the proposed theory allows constant non-negative distribution of the conductivity coefficients among all subdomains, with large jumps aligned to the interfaces.
	
	The reordering of the degrees of freedom leads to a reordered system matrix: assuming that the system (\ref{Jacobiansystem}) can be written as $\mathcal{K} u = f$, then we will have
	\begin{equation*}
		\mathcal{K}^{(j)} = 
		\begin{bmatrix}
			K_{II}^{(j)} 					& K_{I\Gamma}^{(j)} \\
			K_{I \Gamma}^{(j) T}  	& K_{\Gamma \Gamma}^{(j)}
		\end{bmatrix},
		\qquad
		\mathcal{K} = 
		\begin{bmatrix}
			K_{II} 					& K_{I\Gamma} \\
			K_{I \Gamma}^T 	& K_{\Gamma \Gamma}
		\end{bmatrix}.
	\end{equation*}
	As in classical iterative substructuring, we apply the so called {\it static condensation}, which consists in eliminating the interior degrees of freedom, thus reducing the problem to one on the interface $\Gamma = \cup_{j=1}^N \partial \Omega_j \backslash \partial \Omega$. The local Schur complement systems are
	\begin{equation*}
		S_{\Gamma}^{(j)} = K_{\Gamma \Gamma}^{(j)} - K_{I \Gamma}^{(j) T} K_{II}^{(j) -1}  	 K_{I\Gamma}^{(j)}.
	\end{equation*}
	By defining the unassembled Schur complement matrix $S_\Gamma = \text{diag } \left[ S_\Gamma^{(1)}, \dots, S_\Gamma^{(N)} \right]$, we obtain the global Schur complement matrix $\widehat{S}_\Gamma = R_\Gamma^T S_\Gamma R_\Gamma^T$, where $R_\Gamma$ is the direct sum of local restriction operators $R_\Gamma^{(j)}$ returning the local interface components. 
	Thus, instead of solving system (\ref{Jacobiansystem}), we solve the Schur complement system 
	\begin{equation}\label{schursys}
		\widehat{S}_\Gamma u_\Gamma = \widehat{f}_\Gamma, 
	\end{equation}
	where $\widehat{f}_\Gamma$ is retrieved from the right-hand-side of \ref{Jacobiansystem}.
	Once this problem is solved, the solution $u_\Gamma$ on the interface is used to recover the solution on the internal degrees of freedom
	$u_I = K_{II}^{-1} \left( f_I  - K_{I \Gamma} u_\Gamma \right)$.
	The Schur complement matrix $\widehat{S}_\Gamma$ of the Jacobian Bidomain system (\ref{Jacobiansystem}) is symmetric, positive semidefinite, hence it is possible to apply the Preconditioned Conjugate Gradient (PCG) method.
	
	We define the Jacobian Bidomain local discrete harmonic extension operators as follows:
	\begin{equation*}
		\mathcal{H}_j : W_\Gamma^{(j)} \longrightarrow W^{(j)}, \qquad \mathcal{H}_j w_\Gamma^{(j)} = 
		\begin{dcases*}
			- K_{II}^{(j) -1}  	 K_{I\Gamma}^{(j)}w_\Gamma^{(j)}		\qquad		\text{on } W^{(j)}_I	\\
			w_\Gamma^{(j)}													\qquad	\qquad \qquad \qquad		\text{on } W^{(j)}_\Gamma
		\end{dcases*}.
	\end{equation*}
	We note that the local discrete harmonic extension of a constant vector is the vector itself. 
	From now on, we will use the component-wise notation 
	\begin{equation*}
		\mathcal{H}_j w_\Gamma^{(j)} = \left( \mathcal{H}_j^i w_\Gamma^{(j)}, \mathcal{H}_j^e w_\Gamma^{(j)} \right),
	\end{equation*}
	where the superscripts $i, e$ denote the usual intra- and extracellular components.
	As for standard elliptic problems (see \cite{toselli2006domain}), the Schur bilinear form can be defined through the action of the Schur matrix and the Jacobian bilinear form 
	\begin{equation*}
		a^{(j)} (\mathcal{H}_j u_\Gamma^{(j)} , \mathcal{H}_j v_\Gamma^{(j)} ) =  v_\Gamma^{(j)} S_\Gamma^{(j)} u_\Gamma^{(j)} = s^{(j)} (u_\Gamma^{(j)},v_\Gamma^{(j)}).
	\end{equation*}
	From the definition of $S_\Gamma^{(j)}$, it follows immediately that the bilinear form $s^{(j)}(\cdot, \cdot)$ is symmetric and coercive. Thanks to Lemma \ref{ellipbound}, it is possible to bound the energies related to the local Schur complements, 
	\begin{equation}\label{schurminform}
		s^{(j)} (u_\Gamma^{(j)},u_\Gamma^{(j)}) = \min_{v^{(j)}_{| \pO_j \cap \Gamma} =  u_\Gamma^{(j)} } a^{(j)} (\mathcal{H}_j u_\Gamma^{(j)} , \mathcal{H}_j u_\Gamma^{(j)} ),
	\end{equation}
	which allows us to work with discrete harmonic extensions
	instead of functions defined only on $\Gamma$. 
	
	\subsection{Restriction operators and scaling}
	We define the restriction operators 
	\begin{eqnarray}
		R_\Delta^{(j)}: W_\Delta \rightarrow W_\Delta^{(j)},  		&\qquad R_{\Gamma \Delta}: W_\Gamma \rightarrow W_\Delta, 	\nonumber \\
		R_\Pi^{(j)}: \widehat{W}_\Pi \rightarrow W_\Pi^{(j)},  	 	&\qquad R_{\Gamma \Pi}: W_\Gamma \rightarrow \widehat{W}_\Pi, 	\nonumber
	\end{eqnarray}
	and the direct sums $R_\Delta = \oplus R_\Delta^{(j)}$, $R_\Pi = \oplus R_\Pi^{(j)} $ and $\widetilde{R}_\Gamma = R_{\Gamma \Pi} \oplus R_{\Gamma \Delta}$, which maps $W_\Gamma $ into $\widetilde{W}_\Gamma$. 
	We also need a proper scaling of the dual variables.\\
	
	{\bf $\rho$-scaling.} Originally proposed for Neumann-Neumann methods, the $\rho$-scaling is defined  for the Bidomain model at each node $x \in \Gammaj$ as 
	\begin{equation}\label{pseudoinv}
		\diej (x) = \dfrac{\sigma_M^{{i,e}^{(j)}}}{\sumNx{\sigma_M^{{i,e}^{(k)}}}}, 		\qquad 			\sigma_M^{{i,e}^{(j)}} = \max_{\bullet = \left\{ l, t, n \right\}} \sigma^{{i,e}^{(j)}}_\bullet,
	\end{equation}
	where $\mN_x$ is the set of indices of all subdomains with $x$ in the closure of the subdomain. If $x$ is in the interior of a subdomain, then $\mN_x$ contains only the index of that subdomain. Moreover, $\mN_x$  induces the definition of an equivalence relation that allows the classification of interface degrees of freedom into faces, edges and vertices equivalence classes. \\
	
	{\bf Deluxe scaling.}  Recently introduced in \cite{dohrmann2016bddc} and studied in \cite{da2014isogeometric}, the deluxe scaling computes the average $\bar{w} = \Ed w$ for each face $\mathcal{F}$ or edge $\mathcal{E}$ equivalence class.
	Suppose that $\mathcal{F}$ is shared by subdomains $\Omega_j$ and $\Omega_k$. Let $\SjF$ and $\SkF$ be the principal minors obtained from $\Sj_\Gamma$ and $\Sk_\Gamma$ by removing all the contributions that are not related to the degrees of freedom of the face $\mathcal{F}$.  
	Let $\uFj = \RF u_j$ be the restriction of $u_j$ to the face $\mathcal{F}$ through the restriction operator $\RF$. The deluxe average across $\mathcal{F}$ is then defined as 
	\begin{eqnarray}\nonumber
		\umeanF = \left( \SjF + \SkF \right)^{-1} \left( \SjF \uFj + \SkF \uFk \right).
	\end{eqnarray}
	The action of $ ( \SjF + \SkF )^{-1} $ can be computed by solving a Dirichlet problem over the two subdomains involved, by extending to zero the right-hand side entries that correspond with the interior nodes. 
	
	If we consider an edge $\mathcal{E}$ instead, 
	the deluxe average across $\mathcal{E}$ is defined in a similar manner. 
	Suppose for simplicity that $\mathcal{E}$ is shared by only three subdomains with indices $j_1$, $j_2$ and $j_3$; the extension to more than three subdomains is immediate. 
	Let $\uEj = \RE u_j$ be the restriction of $u_j$ to the edge $\mathcal{E}$ through the restriction operator $\RE$ and define $\SjallE = \SjunoE + \SjdueE + \SjtreE$; the deluxe average across an edge $\mathcal{E}$ is given by
	\begin{eqnarray}\nonumber
		\umeanE = \left( \SjallE  \right)^{-1} \left( \SjunoE \uEjuno + \SjdueE \uEjdue + \SjtreE \uEjtre \right).
	\end{eqnarray}
	
	The relevant equivalence classes, involving the substructure $\Omega_j$, will contribute to the values of $\bar{u}$. These contributions will belong to $\widehat{W}_\Gamma$, after being extended by zero to $\Gamma \backslash \mathcal{F}$ or $\Gamma \backslash \mathcal{E}$; the sum of all contributions will result in $R^T_\ast \bar{u}_\ast$. We then add the contributions from the different equivalence classes to obtain
	\begin{eqnarray}\nonumber
		\bar{u} = \Ed u = u_\Pi + \sum_{\ast = \{ \mathcal{F}, \mathcal{E} \} } R^T_\ast \bar{u}_\ast,
	\end{eqnarray}
	where $\Ed$ is a projection. Its complementary projection is given by
	\begin{eqnarray}\label{PdDeluxe}
		\Pd u := (I - \Ed) u = u_\Delta - \sum_{\ast = \{ \mathcal{F}, \mathcal{E} \} } R^T_\ast \bar{u}_\ast.
	\end{eqnarray}
	For each subdomain $\Omega_j$ we define the scaling matrix
	\begin{equation}\label{deluxescaling}
		D^{(j)} =
		\begin{bmatrix}
			D^{(j)}_{\ast_{k_1}} 	&		&	\\
			& \ddots	&	\\
			&		&D^{(j)}_{\ast_{k_j}}
		\end{bmatrix},
		\qquad
		\ast = \left\{ \mathcal{F}, \mathcal{E} \right\}
	\end{equation}
	with $k_1, \dots, k_j \in \varXi_j^\ast$ set containing the indices of the subdomains that share the face $\mathcal{F}$ or the edge $\mathcal{E}$ and where the diagonal blocks are given by $D^{(j)}_\mathcal{F} = ( \SjF + \SkF )^{-1} \SjF $ or $D^{(j)}_\mathcal{E}=  (\SjunoE + \SjdueE + \SjtreE)^{-1} \SjunoE$.
	
	We can now define the scaled local restriction operators
	\begin{equation*}
		R_{D, \Gamma}^{(j)} = D^{(j)} R_\Gamma^{(j)}, 	\qquad 	\qquad R_{D, \Delta}^{(j)} = R_{\Gamma \Delta}^{(j)} R_{D, \Gamma}^{(j)} ,
	\end{equation*}
	$R_{D, \Delta}$ as direct sum of $R_{D, \Delta}^{(j)}$ and the global scaled operator 	$\widetilde{R}_{D, \Gamma} = R_{\Gamma \Pi} \oplus R_{D, \Delta} R_{\Gamma \Delta}$. \\
	
	
	\subsection{FETI-DP preconditioner}
	The Finite Element Tearing and Interconnecting Dual-Primal preconditioner was first proposed in \cite{farhat2001feti} as an alternative to one-level and two-level FETI. This class of methods is based on the transposition from the Schur problem (\ref{schursys}) on $\widehat{W}_\Gamma$ to a minimization problem on $\widetilde{W}_\Gamma$, with continuity constraints on the dual degrees of freedom: 
	find $w_\Gamma \in \widetilde{W}_\Gamma$ which minimizes
	\begin{equation*}
		\begin{dcases*}
			\frac{1}{2} w_\Gamma^T \widetilde{S}_\Gamma w_\Gamma - w_\Gamma^T \tilde{f}_\Gamma 	\\
			B w_\Gamma = 0
		\end{dcases*},
	\end{equation*}
	where $\tilde{f}_\Gamma = \widetilde{R}_\Gamma^T f_\Gamma$ is given by partially subassembling the Schur complement right-hand side on primal nodes and $B$ is the jump operator with entries $0, \pm 1$. The second part of the system $B w_\Gamma = 0$ holds if and only if $ w_\Gamma \in \widehat{W}$, which means that the columns of $B$ related to primal degrees of freedom are null.
	By introducing a set of Lagrange multipliers $\lambda \in \Lambda = range (B)$, it is possible to formulate the minimization problem as a saddle point system
	\begin{equation*}
		\begin{bmatrix}
			\widetilde{S}_\Gamma 	& B^T		\\
			B									& 0 
		\end{bmatrix}
		\begin{bmatrix}
			w_\Gamma	\\				\lambda	
		\end{bmatrix}
		=
		\begin{bmatrix}
			\tilde{f}_\Gamma 	\\		0
		\end{bmatrix}.
	\end{equation*}
	As $\widetilde{S}_\Gamma$ is invertible on $range (B)$, the degrees of freedom in $\widetilde{W}_\Gamma$ can be eliminated by a block-Cholesky factorization, reducing the above system to a problem only in the Lagrange multipliers unknowns
	\begin{equation}\label{lagrangesystem}
		F \lambda = d, 			\qquad 			\text{where } \quad F = B \widetilde{S}_\Gamma^{-1} B^T, 		\qquad 			d = B \widetilde{S}_\Gamma^{-1}  \tilde{f}_\Gamma.
	\end{equation}
	After the solution $\lambda$ is found, we can retrieve the solution on $\widetilde{W}_\Gamma$ as $w_\Gamma = \widetilde{S}_\Gamma^{-1} \left( \tilde{f}_\Gamma - B^T \lambda \right)$.
	The FETI-DP system (\ref{lagrangesystem}), in our application is symmetric, thus PCG works well. In order to ensure fast convergence, a quasi-optimal preconditioner is given by
	\begin{equation*}
		M^{-1}_\text{FETI-DP} = B_D \widetilde{S}_\Gamma B_D^T,
	\end{equation*}
	where $B_D$ is the scaled jump operator, obtained by applying $D^{(j)} : \Lambda \rightarrow \Lambda$ scaling matrices that act on the space of the Lagrange multipliers, which are given by (\ref{deluxescaling}) if the deluxe scaling is used or by the pseudoinverses (\ref{pseudoinv}) if the standard $\rho$-scaling is used.
	
	\subsection{BDDC preconditioner}
	BDDC is a two-level preconditioner for the Schur complement system $\widehat{S}_\Gamma u_\Gamma = \widehat{f}_\Gamma$.
	If we partition the degrees of freedom of the interface $\Gamma$ into those internal ($I$) and those dual ($\Delta$), the matrix $\mathcal{K}^{(j)}$ from the problem $\mathcal{K} u = f$ can be written as
	\begin{equation*}
		\mathcal{K}^{(j)} = 
		\begin{bmatrix}
			K_{II}^{(j)} 					& K_{I\Gamma}^{(j)} \\
			K_{I \Gamma}^{(j) T}  	& K_{\Gamma \Gamma}^{(j)}
		\end{bmatrix}
		=
		\begin{bmatrix}
			K_{II}^{(j)} 					& K_{I \Delta}^{(j)} 			   & K_{I \Pi}^{(j)} \\
			K_{I \Delta}^{(j) T}  		& K_{\Delta \Delta}^{(j)} 		& K_{\Delta \Pi}^{(j)}	\\
			K_{I \Pi}^{(j) T}			  & K_{\Delta \Pi}^{(j) T}			& K_{\Pi \Pi}^{(j)}
		\end{bmatrix}.
	\end{equation*} 
	It is possible to define the BDDC preconditioner using the restriction operators as 
	\begin{equation*}
		M^{-1}_\text{BDDC} = \widetilde{R}_{D, \Gamma}^T \widetilde{S}_\Gamma^{-1}  \widetilde{R}_{D, \Gamma}, 	\qquad 		\widetilde{S}_\Gamma = \widetilde{R}_\Gamma S_\Gamma \widetilde{R}_\Gamma^T,
	\end{equation*}
	where the action of the inverse of $\widetilde{S}_\Gamma$ can be evaluated with a block-Cholesky elimination procedure
	\begin{equation*}
		\widetilde{S}_\Gamma^{-1} = \widetilde{R}_{\Gamma \Delta }^T \left( \sum_{j=1}^{N} 
		\begin{bmatrix}
			0 		&R_\Delta^{(j) T}
		\end{bmatrix}
		\begin{bmatrix}
			K_{II}^{(j)} 					& K_{I \Delta}^{(j)} 			  \\
			K_{I \Delta}^{(j) T}  		& K_{\Delta \Delta}^{(j)}
		\end{bmatrix}^{-1}
		\begin{bmatrix}
			0 	\\		R_\Delta^{(j)}
		\end{bmatrix}
		\right) \widetilde{R}_{\Gamma \Delta } + \varPhi S_{\Pi \Pi}^{-1} \varPhi .
	\end{equation*}
	In this way, the first term is the sum of local solvers on each substructure $\Omega_j$, while the latter is a coarse solver for the primal variables where
	\begin{equation*}
		\varPhi = R_{\Gamma \Pi}^T - R_{\Gamma \Delta}^T \sum_{j=1}^N 
		\begin{bmatrix}
			0 		&R_\Delta^{(j) T}
		\end{bmatrix}
		\begin{bmatrix}
			K_{II}^{(j)} 					& K_{I \Delta}^{(j)} 			  \\
			K_{I \Delta}^{(j) T}  		& K_{\Delta \Delta}^{(j)}
		\end{bmatrix}^{-1}
		\begin{bmatrix}
			K_{I \Pi}^{(j)} 	\\		R_{\Delta \Pi}^{(j)}
		\end{bmatrix}
		R_\Pi^{(j)} ,
	\end{equation*}
	\begin{equation*}
		S_{\Pi \Pi } = \sum_{j=1}^N  R_\Pi^{(j) T} 
		\left( K_{\Pi \Pi}^{(j)} - 
		\begin{bmatrix}
			K_{I \Pi}^{(j) T}		&K_{\Delta \Pi}^{(j) T}   
		\end{bmatrix}
		\begin{bmatrix}
			K_{II}^{(j)} 					& K_{I \Delta}^{(j)} 			  \\
			K_{I \Delta}^{(j) T} 		& K_{\Delta \Delta}^{(j)}
		\end{bmatrix}^{-1}
		\begin{bmatrix}
			K_{I \Pi}^{(j)} 	\\		R_{\Delta \Pi}^{(j)}
		\end{bmatrix}
		\right) R_\Pi^{(j)},
	\end{equation*}
	are the matrix which maps the primal degrees of freedom to the interface variables and the primal problem respectively.

	
	\section{Convergence rate estimate} \label{convrate}
	It has been proven that FETI-DP and BDDC methods are spectrally equivalent \cite{li2006feti}. In this perspective, we are able to prove a convergence rate estimate for the preconditioned operator, which holds for both methods when the same coarse space is chosen.
	We observe that, as in most of the convergence bounds for FETI-DP and BDDC operator, also in this application the condition number is independent of the number of subdomains.
	We first recall some useful technical results that will be employed in the proof of the convergence rate estimate. These results can be found in the appendix A of \cite{toselli2006domain}. 
	\begin{theorem}[Trace theorem]\label{tracethm}
		Let $\Omega_j$ be a polyhedral domain and define the discrete harmonic extension of the Laplacian operator $\harmext{j}{u_\Gamma}$ on $\Omega_j$ as
		\begin{equation*}
			u = \harmext{j}{u_\Gamma} \Leftrightarrow 
			\begin{dcases*}
				- \Delta u = 0	\qquad  \text{in } \Omega_j \\
				u = u_\Gamma    \quad \qquad \text{on } \Gammaj \\
				u = 0 		\qquad \qquad \text{on } \pO_j \backslash \Gammaj.
			\end{dcases*}.
		\end{equation*} 
		Then,
		\begin{equation*}
			\seminormHhalf{u}{\Gammaj}^2 \sim \seminormHone{\harmext{j}{u_\Gamma}}{\Omega_j}^2,
		\end{equation*}
	\end{theorem}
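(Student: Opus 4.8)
The plan is to prove the asserted equivalence $\seminormHhalf{u}{\Gammaj}^2 \sim \seminormHone{\harmext{j}{u_\Gamma}}{\Omega_j}^2$ as a pair of one-sided inequalities, each resting on one of the two classical pillars of the theory on a polyhedral (Lipschitz) domain: the continuity of the trace operator and the existence of a bounded extension (lifting) operator.

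First I would treat the bound $\seminormHhalf{u}{\Gammaj}^2 \lesssim \seminormHone{\harmext{j}{u_\Gamma}}{\Omega_j}^2$. Since $u = \harmext{j}{u_\Gamma}$ belongs to $H^1(\Omega_j)$ and its trace on $\Gammaj$ equals $u_\Gamma$ (while vanishing on $\partial \Omega_j \setminus \Gammaj$), the standard trace theorem, stating that the trace map $H^1(\Omega_j) \to H^{1/2}(\partial \Omega_j)$ is continuous, applied to $u$ gives immediately $\seminormHhalf{u}{\Gammaj} \le C \seminormHone{u}{\Omega_j}$. This direction does not even use harmonicity: it holds for any $H^1$ representative of the boundary data.

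For the reverse inequality $\seminormHone{\harmext{j}{u_\Gamma}}{\Omega_j}^2 \lesssim \seminormHhalf{u}{\Gammaj}^2$, the crucial fact is the variational characterization of $\harmext{j}{u_\Gamma}$: being the weak solution of $-\Delta u = 0$ with the prescribed Dirichlet data, it is precisely the minimizer of the Dirichlet energy $\int_{\Omega_j} |\nabla v|^2\,dx = \seminormHone{v}{\Omega_j}^2$ over all $v \in H^1(\Omega_j)$ sharing the same boundary trace. The extension theorem supplies a competitor, a bounded lifting $E u_\Gamma$ of the datum with $\seminormHone{E u_\Gamma}{\Omega_j} \le C \seminormHhalf{u}{\Gammaj}$. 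Comparing energies, $\seminormHone{\harmext{j}{u_\Gamma}}{\Omega_j} \le \seminormHone{E u_\Gamma}{\Omega_j} \le C \seminormHhalf{u}{\Gammaj}$, which closes the equivalence.

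The difficulty here is entirely technical rather than conceptual. On a polyhedral subdomain the boundary is only Lipschitz, with edges and corners, so both the trace and the extension theorems must be invoked in their Lipschitz-domain form; moreover, the homogeneous condition on $\partial \Omega_j \setminus \Gammaj$ means one should really control an $H^{1/2}_{00}$-type norm on the open face $\Gammaj$ rather than the plain $H^{1/2}$ seminorm, and this localization has to be handled carefully. Finally, for the equivalence to be useful in the domain decomposition estimates the constants must be independent of the subdomain diameter $H_j$; this is obtained by the usual dilation argument, mapping $\Omega_j$ onto a reference configuration of unit diameter and using shape regularity, so that the hidden constants depend only on the shape of $\Omega_j$. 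These are exactly the points developed in Appendix A of the cited monograph, whose treatment I would follow.
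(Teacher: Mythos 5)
Your proof is correct and coincides with the argument the paper itself relies on: the paper states this result without proof, citing Appendix A of Toselli--Widlund, where precisely your two-sided reasoning is carried out --- continuity of the trace operator on a Lipschitz domain for the bound $\seminormHhalf{u}{\Gammaj}^2 \lesssim \seminormHone{\harmext{j}{u_\Gamma}}{\Omega_j}^2$, and energy minimality of the harmonic extension compared against a bounded $H^{1/2}\to H^1$ lifting for the reverse bound, with diameter-independent constants obtained by dilation to a unit-size reference configuration under shape regularity. Your caveat that the homogeneous condition on $\pO_j \backslash \Gammaj$ strictly requires controlling an $H^{1/2}_{00}$-type norm on $\Gammaj$ rather than the plain seminorm is well taken, but it matches the level of precision of the paper's own statement, which in the convergence analysis is invoked for subdomain boundaries where this localization issue is treated exactly as in the cited appendix.
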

	\begin{proposition}[Poincar\'e-Friedrichs inequality]\label{frieddisug}
		Let $\Omega$ be Lipschitz continuous with diameter $H$. Then, there exists a constant $C_f$, that depends only on the shape of $\Omega$ but not on its size, such that 
		\begin{equation*}
			\norm{u}{\Omega}^2 \leq C_f H^2 |u|^2_{H^1(\Omega)},
		\end{equation*}
		for all $u \in H^1(\Omega)$ with vanishing mean value on $\Omega$. 
	\end{proposition}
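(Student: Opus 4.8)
The plan is to reduce the inequality to a fixed reference configuration by a scaling argument, and then to establish the scale-free estimate on that reference domain by a compactness (contradiction) argument.

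First I would fix a reference domain $\hat\Omega$ of unit diameter having the same shape as $\Omega$, so that $\Omega = H\hat\Omega$ under the dilation $x = H\hat x$ (with $d=3$ the space dimension). Writing $\hat u(\hat x) = u(H\hat x)$, the change of variables yields the scaling relations $\norm{u}{\Omega}^2 = H^d \norm{\hat u}{\hat\Omega}^2$ and $\seminormHone{u}{\Omega}^2 = H^{d-2}\seminormHone{\hat u}{\hat\Omega}^2$, while the mean-value constraint $\int_\Omega u\,dx = 0$ transforms into $\int_{\hat\Omega}\hat u\,d\hat x = 0$. Consequently it suffices to prove the scale-free estimate $\norm{\hat u}{\hat\Omega}^2 \le \hat C\,\seminormHone{\hat u}{\hat\Omega}^2$ on $\hat\Omega$; setting $C_f := \hat C$ and inserting $\seminormHone{\hat u}{\hat\Omega}^2 = H^{-(d-2)}\seminormHone{u}{\Omega}^2$ gives $\norm{u}{\Omega}^2 = H^d\norm{\hat u}{\hat\Omega}^2 \le \hat C\,H^d H^{-(d-2)}\seminormHone{u}{\Omega}^2 = C_f H^2\seminormHone{u}{\Omega}^2$, so the powers $H^d$ and $H^{d-2}$ combine into the single factor $H^2$ and the constant $C_f$ depends only on the shape of $\hat\Omega$, hence only on the shape of $\Omega$ and not on its size.

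It remains to prove the estimate on $\hat\Omega$, which I would do by contradiction using the Rellich--Kondrachov theorem. Suppose no such $\hat C$ exists: then there is a sequence $\{\hat u_n\} \subset H^1(\hat\Omega)$ with $\int_{\hat\Omega}\hat u_n\,d\hat x = 0$, normalized so that $\norm{\hat u_n}{\hat\Omega} = 1$, while $\seminormHone{\hat u_n}{\hat\Omega} \to 0$. This sequence is bounded in $H^1(\hat\Omega)$, so since $\hat\Omega$ is bounded and Lipschitz the compact embedding $H^1(\hat\Omega) \hookrightarrow L^2(\hat\Omega)$ furnishes a subsequence converging strongly in $L^2$ to some $\hat u$. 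Because the seminorms tend to zero, the weak gradients converge to zero, so $\hat u \in H^1(\hat\Omega)$ has vanishing gradient; as $\hat\Omega$ is connected, $\hat u$ is constant. Passing to the limit in the constraint forces $\int_{\hat\Omega}\hat u\,d\hat x = 0$, hence that constant is zero, contradicting $\norm{\hat u}{\hat\Omega} = \lim_n \norm{\hat u_n}{\hat\Omega} = 1$.

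The main obstacle is the compactness step: it is precisely here that the Lipschitz regularity of the boundary enters, guaranteeing both the compact Sobolev embedding and the existence of an extension operator, and that connectedness of $\hat\Omega$ is needed to conclude that a gradient-free function is constant. One must also ensure that the entire construction is carried out on a single reference shape, so that $\hat C$, and therefore $C_f$, genuinely depends only on the geometry of $\hat\Omega$ and is inherited unchanged by every dilated copy $\Omega = H\hat\Omega$; this is exactly what renders the constant independent of the size $H$.
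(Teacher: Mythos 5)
Your proof is correct and follows the standard route: the paper itself does not prove this proposition but cites Appendix A of Toselli--Widlund, where the argument is exactly the one you give --- a dilation to a unit-diameter reference domain to extract the $H^2$ factor, combined with a compactness (Rellich--Kondrachov) contradiction argument on the reference shape to obtain the scale-free constant. You also correctly identify where the Lipschitz regularity and connectedness hypotheses enter, so nothing is missing.
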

	
	We will write with $A \lesssim B$ whenever $A \leq cB$ with $c$ constant independent from the diameter $H$, the mesh size $h$, the time step $\tau$ and the conductivity coefficients; similarly, we will write $A \sim B$ whenever $A \lesssim B$ and $B \lesssim A$. 
	The main result of the paper is the following optimality bound.
	\begin{theorem} 
		If the deluxe scaling is used, the condition number of the FETI-DP and BDDC preconditioned operators satisfy
		\begin{eqnarray}
			cond \ (P^{-1} Q) \leq \max_{\substack{ k = 1,\dots,N \\ \star = i,e}}  \dfrac{\tau \sigma_M^{\star (k)} + H^2 \left( \chi C_m + \tau K_M \right)}{\sigma_m^{\star (k)}} \left( 1 + \log \left( \frac{H}{h} \right) \right)^3,
		\end{eqnarray}
		where	$\sigma^{i,e}_M = \max_{\bullet = \left\{ l, t, n \right\}} \sigma^{i,e}_\bullet$, 	$ \sigma^{i,e}_m = \min_{\bullet = \left\{ l, t, n \right\}} \sigma^{i,e}_\bullet$, $P^{-1}$ denotes the FETI-DP or BDDC preconditioner, $Q$ is the system matrix, and $K_M$ is a constant independent of the subdomain diameter $H$ and mesh size $h$.
	\end{theorem}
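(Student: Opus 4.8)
The plan is to exploit the spectral equivalence between FETI-DP and BDDC (\cite{li2006feti}), so that it suffices to bound the condition number of a single preconditioned operator, say the BDDC one; the FETI-DP bound then follows automatically. By the standard abstract theory of dual-primal methods, this condition number is controlled by the energy norm of the deluxe average operator $\Ed$, equivalently by its complementary projection $\Pd$ of (\ref{PdDeluxe}): one shows that the relevant eigenvalues of $P^{-1}Q$ lie in an interval whose upper endpoint is $\|\Pd\|^2_{\widetilde{S}_\Gamma}$, so the whole argument reduces to establishing a bound of the form $|\Pd w|^2_{\widetilde{S}_\Gamma} \lesssim C(H,h)\, |w|^2_{\widetilde{S}_\Gamma}$, with $C(H,h)$ equal to the right-hand side of the statement. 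Throughout I would work with discrete harmonic extensions and use the variational characterization (\ref{schurminform}) of the local Schur energies, which lets me replace $S_\Gamma^{(j)}$-norms by the local Jacobian bilinear form evaluated on $\mathcal{H}_j$-extensions.

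First I would localize. Using (\ref{PdDeluxe}), $\Pd w$ splits into a sum of contributions associated with the face and edge equivalence classes, so the global estimate follows from bounding, for each face $\mathcal{F} = \pO_j \cap \pO_k$ and each edge $\mathcal{E}$, the deluxe-scaled jumps $\uFj - \umeanF$ and $\uEj - \umeanE$ in the corresponding Schur energy. The crucial algebraic step is the deluxe identity: from $\umeanF = (\SjF + \SkF)^{-1}(\SjF \uFj + \SkF \uFk)$ one obtains $\SjF(\uFj - \umeanF) = \SjF(\SjF + \SkF)^{-1}\SkF(\uFj - \uFk)$, and an analogous expression across an edge. This yields a coefficient-robust bound in which the scaled jump is controlled by the parallel sum of the two face Schur complements applied to the genuine jump $\uFj - \uFk$; it is precisely this structure that removes any dependence on jumps in the conductivity coefficients across the interface and leaves only the local ratio $\sigma_M^{\star (k)}/\sigma_m^{\star (k)}$ in the final constant.

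Next I would convert the Schur-energy bounds into Sobolev estimates. By (\ref{schurminform}) and the Trace theorem (Theorem \ref{tracethm}) applied componentwise to $\harmext{j}{\cdot}$, the local energies are comparable to $H^{1/2}$-seminorms on $\Gammaj$, hence to $H^1$-seminorms on the subdomains. The genuine jumps restricted to faces and edges are then estimated by the discrete Sobolev inequalities (the face and edge lemmas of \cite[Appendix A]{toselli2006domain}), which produce the factors $(1+\log(H/h))$: one from the face estimate and one from the edge estimate, with an additional factor arising from the coupled transmembrane term $\norm{u_i - u_e}{\Omega_j}^2$, which does not split into the two potentials and must be controlled separately on $\mathcal{F}$ and $\mathcal{E}$; together these account for the cube $(1+\log(H/h))^3$. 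The coefficient prefactor comes from Lemma \ref{ellipbound}: the upper bound supplies the numerator, where the reaction/mass term $(\chi C_m + \tau K_M)\norm{s_i - s_e}{\Omega_j}^2$ is converted to an $H^1$-seminorm by the Poincar\'e--Friedrichs inequality (Proposition \ref{frieddisug}), contributing $H^2(\chi C_m + \tau K_M)$ alongside the diffusion term $\tau \sigma_M^{\star (k)}$, while the lower bound supplies the denominator $\tau \sigma_m^{\star (k)}$.

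The main obstacle I expect is the coupling between $u_i$ and $u_e$ through the transmembrane potential. Unlike the standard scalar or block-diagonal elliptic case, the Schur complements $S_\Gamma^{(j)}$ mix the intra- and extracellular components via the mass term, so the deluxe averages and the jump estimates cannot be carried out independently for each field. I would need to verify that the deluxe identity and the ensuing parallel-sum bound remain valid for the coupled block Schur complements, and that the discrete Sobolev estimates apply to the difference $u_i - u_e$ as well as to the individual components; this is the step that both produces the extra logarithmic factor and requires the full strength of Lemma \ref{ellipbound} to keep the constant independent of $H$, $h$ and $\tau$.
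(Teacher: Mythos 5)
Your overall route is the paper's route: spectral equivalence of FETI-DP and BDDC, reduction of the condition number bound to an energy bound for the complementary projection $\Pd$ of (\ref{PdDeluxe}) (this is exactly Lemma \ref{edgelemma}), localization to face and edge equivalence classes, the deluxe algebra with inequalities from the generalized eigenvalue problem $\SjF \phi = \lambda \SkF \phi$, and the conversion of Schur energies to $H^{1/2}$/$H^1$ seminorms via (\ref{schurminform}), Theorem \ref{tracethm}, Proposition \ref{frieddisug} and the discrete Sobolev lemmas of Toselli--Widlund, with Lemma \ref{ellipbound} supplying the coefficient prefactors. Up to that point the sketch is sound and matches the paper.

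There is, however, a genuine gap in how you close the face estimate, and it concerns precisely the source of the cubic logarithm. After applying the parallel-sum identity to the genuine jump $\uFj - \uFk$, you must subtract the face means $\umeanieFj$ and $\umeanieFk$ on each side before Poincar\'e--Friedrichs and the face lemma (TW Lemma 4.26) can be applied; this leaves the residual term $\seminormSjF{ \left( \SjF + \SkF \right)^{-1} \SkF \left( \umeanieFj - \umeanieFk \right) }^2$, which is uncontrolled unless it is bridged through a primal quantity. Your sketch never addresses it, and it cannot be dropped: the paper's Lemma \ref{edgelemma} assumes the primal space contains vertex values \emph{and edge averages} exactly for this purpose, since then $\uiemeanEj = \uiemeanEk$ on a primal edge $\mathcal{E} \subset \partial \mathcal{F}$, and one adds and subtracts this common edge average to reduce the residual to $\seminormSjF{\uiemeanEj - \umeanieFj}^2$, the edge average of the face deviation, which via TW Lemmas 4.16, 4.17, 4.19 and 4.30 carries the full factor $\left(1 + \log (H/h)\right)^3$. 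Your bookkeeping of the logarithms --- one from the face lemma, one from the edge lemma, one from the transmembrane coupling --- is incorrect on all three counts: the face lemma contributes $\left(1 + \log (H/h)\right)^2$, the pure edge contribution only $\left(1 + \log (H/h)\right)$, and the coupling term $\norm{u_i - u_e}{\Omega_j}^2$ contributes no logarithm at all, since Poincar\'e--Friedrichs converts it into the $H^2 \left( \chi C_m + \tau K_M \right)$ prefactor (the block coupling poses no separate difficulty once Lemma \ref{ellipbound} is invoked). As written, your argument either fails to control the face-mean mismatch, or forces you to put face averages into the primal space (in which case that term vanishes, but the coarse space differs from the one in Lemma \ref{edgelemma} and the logarithmic power would drop to two, so the claimed bound would not be the one you derived).
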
	
	
	The core of the proof relies on the following Lemma. 
	
	\begin{lemma}\label{edgelemma}
		Let $\chi C_m + \tau \pIon \geq c > 0$ with $c \in \mathbb{R}^+$. Let the primal set be spanned by the vertex nodal finite element functions and the subdomain edges averages. If the projection operator is scaled by the deluxe scaling, then
		$\forall u \in \widetilde{W}_\Gamma$
		\begin{equation*}
			\seminormS{\Ed u}^2 \lesssim  \max_{\substack{ k = 1,\dots,N \\ \star = i,e}} \frac{\tau \sigma_M^{\star (k)} + H^2 \left( \chi C_m + \tau K_M \right)}{\tau \sigma_m^{ \star (k)}} \left( 1 + \log \frac{H}{h} \right)^3 \seminormS{u}^2,
		\end{equation*} where
			$		\sigma^{i,e}_M = \max_{\bullet = \left\{ l, t, n \right\}} \sigma^{i,e}_\bullet, 	\ \ \ \ \sigma^{i,e}_m = \min_{\bullet = \left\{ l, t, n \right\}} \sigma^{i,e}_\bullet,
			$
		and $K_M$ is a constant independent of the subdomain diameter $H$ and mesh size $h$.
	\end{lemma}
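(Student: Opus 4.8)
The plan is to follow the now-standard route of the dual-primal deluxe theory (as in \cite{dohrmann2016bddc, da2014isogeometric}), reducing the estimate to a bound on the complementary projection $\Pd = I - \Ed$ of (\ref{PdDeluxe}) and then transferring the coupled Bidomain Schur energy to a decoupled Laplacian setting by means of Lemma \ref{ellipbound}. First I would write $\Ed = I - \Pd$ and use the triangle inequality to reduce the claim to a bound on the jump energy $\seminormS{\Pd u}^2 \lesssim \kappa \, \seminormS{u}^2$, where $\kappa$ is the right-hand side factor: since $\Ed u$ is continuous across $\Gamma$ while $\Pd u$ carries the inter-subdomain discontinuities, it suffices to control the latter, and because $S_\Gamma$ is block diagonal this splits subdomain by subdomain as $\seminormS{\Pd u}^2 = \sum_j \seminormSj{(\Pd u)^{(j)}}^2$. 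On each $\Omega_j$ I would then decompose $(\Pd u)^{(j)}$ into its face and edge contributions following (\ref{PdDeluxe}).

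For a face $\mathcal{F}$ shared by $\Omega_j$ and $\Omega_k$, the deluxe average yields the algebraic identity $\uFj - \umeanF = (\SjF + \SkF)^{-1}\SkF(\uFj - \uFk)$, so that the face energy equals $(\uFj - \uFk)^T \bigl(\SjF (\SjF+\SkF)^{-1}\SkF\bigr)(\uFj - \uFk)$, expressed through the parallel sum of $\SjF$ and $\SkF$. The decisive property of the deluxe scaling is that this parallel sum is bounded above by both $\SjF$ and $\SkF$; this absorbs the coefficient jumps and, crucially, keeps the face term free of any Poincar\'e or Sobolev loss, so that after inserting the face cutoff operator it contributes at most a factor $(1+\log(H/h))^2$.

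For an edge $\mathcal{E}$ I would argue analogously with the three-subdomain deluxe average $\umeanE$, but here only the edge average is primal, so after subtracting $\umeanE$ the residual has vanishing mean along the edge. I would apply the edge cutoff together with the discrete Sobolev inequalities on edges (the edge counterparts of Theorem \ref{tracethm} and of the technical lemmas in \cite[Appendix A]{toselli2006domain}) and the Poincar\'e--Friedrichs inequality (Proposition \ref{frieddisug}) to this zero-mean residual; this is the step that generates the extra logarithmic factor which, combined with the face estimate, accounts for the full power $(1+\log(H/h))^3$. The vertex and edge-average primal constraints enter here precisely by supplying the zero-mean conditions that make the Poincar\'e estimates applicable.

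Finally I would reinstate the Bidomain coefficients. Since the deluxe objects $\SjF$ and $\umeanE$ are built from the full coupled Schur complements, Lemma \ref{ellipbound} is used twice: once to bound each local face/edge energy from above by the decoupled energy with the maximal coefficients, producing $\tau\sigma_M^{\star(k)}$ and, after converting the transmembrane $L^2$-term into an $H^1$-seminorm via Proposition \ref{frieddisug}, the factor $H^2(\chi C_m + \tau K_M)$; and once to bound $\seminormS{u}^2$ from below using the minimal coefficients $\tau\sigma_m^{\star(k)}$. Taking the maximum over the subdomains $k$ and the components $\star = i,e$ then assembles the stated ratio. The hard part, I expect, is exactly the interplay between the coupled structure of the Bidomain Schur complement and the deluxe machinery: the averages $\umeanF$ and $\umeanE$ mix the intra- and extracellular components together with the transmembrane mass term, so the clean scalar parallel-sum and cutoff arguments must be carried through at the level of the coupled operators \emph{before} Lemma \ref{ellipbound} decouples them, while verifying that no uncontrolled logarithmic factor beyond the cube is produced.
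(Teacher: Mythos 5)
Your overall skeleton coincides with the paper's proof: reduction to the complementary projection $\Pd$ of (\ref{PdDeluxe}), the subdomain-local splitting $\seminormS{\Pd u}^2 = \sumJN{\seminormSj{R_{\pO_j} \Pd u}^2}$ into face and edge equivalence classes, the parallel-sum inequalities coming from the generalized eigenvalue problem $\SjF \phi = \lambda \SkF \phi$, and the two-sided use of Lemma \ref{ellipbound} together with Proposition \ref{frieddisug} and Theorem \ref{tracethm} to assemble the ratio $\left[ \tau \sigma_M^{\star} + H^2 \left( \chi C_m + \tau K_M \right) \right] / (\tau \sigma_m^{\star})$. The genuine gap is in your face estimate, at precisely the point that distinguishes this lemma from the standard deluxe face bound. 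Since the primal set contains only vertices and edge averages, the two face averages $\umeanieFj$ and $\umeanieFk$ of the subdomains sharing $\mathcal{F}$ do \emph{not} coincide. The paper's add-and-subtract decomposition therefore leaves a third term, $\seminormSjF{\left( \SjF + \SkF \right)^{-1} \SkF \left( \umeanieFj - \umeanieFk \right)}^2$, which vanishes only when face averages are also primal, and which is the sole source of the cubic logarithm: it is handled by inserting the common primal edge average $\uiemeanEj = \uiemeanEk$ for a primal edge $\mathcal{E} \subset \partial \mathcal{F}$ and invoking \cite[Lemmas 4.16, 4.17, 4.19 and 4.30]{toselli2006domain}, which yields the factor $\left( 1 + \log (H/h) \right)^3$. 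Your proposal suppresses this term entirely: the claim that the face contribution is ``free of any Poincar\'e or Sobolev loss'' and costs at most $\left( 1 + \log(H/h) \right)^2$ holds only if the same quantity is subtracted from both traces while each difference retains zero face mean, i.e.\ only if the face averages are primal --- excluded by the hypothesis of the lemma.

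Relatedly, your bookkeeping of the logarithms cannot produce the stated exponent. The face and edge contributions enter \emph{additively} in $\seminormSj{R_{\pO_j} \Pd u}^2$, so a square face bound ``combined with'' an extra logarithm from the edge class does not multiply into a cube. In the paper the edge contribution is in fact the cheap one: because edge averages are primal, all subdomains sharing $\mathcal{E}$ carry the same average, and \cite[Lemmas 4.16, 4.17 and 4.19]{toselli2006domain} give only a single factor $\left( 1 + \log (H/h) \right)$ there, not the extra power you attribute to it. As written, your argument would establish a $\left( 1 + \log (H/h) \right)^2$ bound, which is the estimate one expects for the richer V+E+F primal space (the paper remarks explicitly that the problematic term is zero in that case), not the V+E result claimed in the lemma; to repair the proof you must reinstate the face-average mismatch term and estimate it through the primal edge averages as above.
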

	
	\begin{proof}
		Instead of proving the bound for the projection operator $\Ed$, we prove it for the complementary projection $\Pd$ (\ref{PdDeluxe}). Moreover, it is sufficient to compute only the local bounds, as it holds 	$\seminormS{\Pd u}^2 = \sumJN{ \seminormSj{R_{\pO_j} \Pd u }^2 } $.
		Thus, for all $u \in \widetilde{W}_\Gamma$ 
		\begin{equation*}
			\seminormSj{R_{\pO_j} \Pd u}^2 \leq | \varXi_j^\ast | \sum_{\substack{\ast = \{ \mathcal{F}, \mathcal{E} \}\\\ast \in \varXi_j^\ast}} \seminormSj{ R_\ast^T \left( u^{i,e}_{j, \ast} - \bar{u}^{i,e}_{\ast}  \right)}^2   , 
		\end{equation*}
		where $\varXi_j^\ast$ is the index set containing the indices of the subdomains that share the face $\mathcal{F}$ or the edge $\mathcal{E}$. Let us distinguish between face and edge contributions.
		
		{\bf Face contributions.} Suppose that the face $\mathcal{F}$ is shared by subdomains $\Omega_j$ and $\Omega_k$. Then, by simple algebra, it follows
		\begin{eqnarray} \nonumber
			\uiejF - \umeanFie 
			= \left( \SjF + \SkF \right)^{-1} \SkF \left[ \left( \uiejF - \umeanieFj \right) - \left( \uiekF - \umeanieFk \right)  + \left( \umeanieFj - \umeanieFk \right)\right],
		\end{eqnarray}
		where we add and subtract the mean value $\bar{u}^{i,e}_{\cdot, \mathcal{F}}$ of $u$ over $\mathcal{F}$ on the subdomain $\cdot$. Therefore, by noticing that $\RF \Sj_\Gamma \RF^T = \SjF$, it follows
		\begin{equation*}
			\seminormSj{\RF^T \left(\uiejF - \umeanFie\right)}^2 \leq  2 \seminormSjF{\uiejF - \umeanieFj}^2 + 2 \seminormSkF{\uiekF - \umeanieFk}^2 + \seminormSjF{\left( \SjF + \SkF \right)^{-1} \SkF \left( \umeanieFj - \umeanieFk \right)}^2,
		\end{equation*}
		$\forall \uiejF \in \widetilde{W}_\Gamma$, where we take advantage of two inequalities arising from the generalized eigenvalue problem $\SjF \phi = \lambda \SkF \phi$ and by observing that all eigenvalues are strictly positive.   \\
		
		It is sufficient now to estimate $\seminormSjF{\uiejF - \umeanieFj}^2$ and $\seminormSjF{\left( \SjF + \SkF \right)^{-1} \SkF \left( \umeanieFj - \umeanieFk \right)}^2$; we highlight that, in case also the subdomain faces averages are included in the primal space, the latter is zero. Starting from the first term, we make use of the bilinear form associated to the Schur complement 
		\begin{align*}
			\seminormSjF{\uiejF - \umeanieFj}^2 &\leq \left( \chi C_m + \tau K_M \right) \norm{\harmext{j}{\left( u_{j} - \umeanieFj \right)}}{\Omega_j}^2 + \tau \sum_{\star = i,e}\sigma^{\star}_M \seminormHone{\harmext{j}{\left( u_{j, \mathcal{F}} - \bar{u}_{j, \mathcal{F}}^\star \right)}}{\Omega_j}^2 \\
			&\lesssim \left[ \tau \sigma_M^{i,e} + H^2 \left( \chi C_m + \tau K_M \right) \right] \seminormHhalf{u_{j, \mathcal{F}} - \umeanieFj}{\Gammaj}^2,
		\end{align*}
		applying the ellipticity Lemma \ref{ellipbound} and the Poincar\'e-Friedrichs inequality \ref{frieddisug} combined with the Trace theorem \ref{tracethm}. 
		As we are already taking in consideration the discrete restriction of a function $u_j \in \widetilde{W}_\Gamma$ on the face $\mathcal{F}$, the notations $u_j - \umeanieFj$ and $\interpol{\Theta_{\mathcal{F}} (u_j - \umeanieFj ) }$ are essentially the same.
		Therefore, it is possible to apply \cite[Lemma $4.26$]{toselli2006domain} and the Trace theorem to get
		\begin{equation*}
			\seminormSj{\uiejF - \umeanieFj}^2 \lesssim \left[ \tau \sigma_M^{i,e} + H^2 \left( \chi C_m + \tau K_M \right) \right] \left( 1 + \log \dfrac{H}{h} \right)^2 \seminormHone{ \harmext{j}{u_j} }{\Omega_j}^2.
		\end{equation*}
		Regarding the second term, let $\mathcal{E} \subset \partial \mathcal{F}$ be a primal edge, such that we can add and subtract $\uiemeanEj = \uiemeanEk$. Then, using the same inequalities from the generalized eigenvalue problem,
		\begin{equation*}
			\seminormSjF{\left( \SjF + \SkF \right)^{-1} \SkF \left( \umeanieFj - \umeanieFk \right)}^2 \leq 2 \ \seminormSjF{  \uiemeanEj - \umeanieFj  }^2 + 2 \ \seminormSkF{ \uiemeanEk - \umeanieFk },
		\end{equation*}
		It is sufficient now to esteem the first term on the right-hand side, as we can deal with the other in the same fashion. Combining the result of ellipticity (Lemma \ref{ellipbound}), the Poincar\'e-Friedrichs inequality and the Trace theorem, we get
		\begin{align*}
			\seminormSjF{  \uiemeanEj - \umeanieFj  }^2 &= \bilformj{\harmextie{j}{\left( \uiemeanEj - \umeanieFj \right) }}{\harmextie{j}{\left( \uiemeanEj - \umeanieFj \right)}} \\ 
			&= \bilformj{\harmextie{j}{ \overline{\left( u_j - \umeanieFj \right) }_{j, \mathcal{E}} }}{\harmextie{j}{ \overline{\left( u_j - \umeanieFj \right) }_{j, \mathcal{E}} }} \\
			&\overset{\substack{Lemma \\\ref{ellipbound}}}{\leq} \left( \chi C_m + \tau K_M \right) \norm{\harmext{j}{ \overline{\left( u_j - \umeanieFj \right) }_{j, \mathcal{E}} }}{\Omega_j}^2 \ + 
			\tau \sum_{\star = i,e}\sigma^{\star}_M \seminormHone{\harmext{j}{ \overline{\left( u_j - \umeanieFj \right) }_{j, \mathcal{E}} }}{\Omega_j}^2 \\
			&\lesssim  \sum_{\star = i,e} \left[ \tau \sigma^{\star}_M + H^2 \left( \chi C_m + \tau K_M \right) \right] \seminormHone{\overline{\left( u_j - \umeanieFj \right) }_{j, \mathcal{E}} }{\Omega_j}.
		\end{align*}
		Using \cite[Lemmas $4.16$, $4.17$ and $4.19$]{toselli2006domain} and \cite[Lemma $4.30$]{toselli2006domain}, it follows 
		\begin{equation*}
			\parallel \overline{\left( u_j - \umeanieFj \right) }_{j, \mathcal{E}} \parallel^2 \ \leq C \left( 1 + \log \dfrac{H}{h} \right)^{3} \seminormHhalf{u_j}{\pO_j}^2 .
		\end{equation*}
		This means that
		\begin{equation*}
			\seminormSjF{  \uiemeanEj - \umeanieFj  }^2 \lesssim C \left( 1 + \log \dfrac{H}{h} \right)^3 \sum_{\star = i,e} \left[ \tau \sigma^{\star}_M + H^2 \left( \chi C_m + \tau K_M \right) \right]  \seminormHone{\harmext{j}{u_j}}{\Omega_j}^2 .
		\end{equation*}
		To conclude, the face contribution gives the bound 
		\begin{align*}
			\seminormSj{P_D u}^2 &\lesssim \sum_{\star = i,e}  \sum_{\mathcal{F} \in \varXi_j^{\mathcal{F}}}  \left[ \tau \sigma_M^{\star} + H^2 \left( \chi C_m + \tau K_M \right) \right] \left( 1 + \log \dfrac{H}{h} \right)^3\seminormHone{ \harmext{j}{u_j} }{\Omega_j}^2.  \\
			&\leq \max_{\star = i,e}  \sum_{\mathcal{F} \in \varXi_j^{\mathcal{F}}}  \dfrac{ \tau \sigma_M^{\star} + H^2 \left( \chi C_m + \tau K_M \right) }{\tau \sigma_m^{\star}} \left( 1 + \log \dfrac{H}{h} \right)^3 \seminormSj{u_j}^2 \\
		\end{align*}	
		
		{\bf Edge contributions.} For simplicity, suppose that an edge $\mathcal{E}$ is shared only by three substructures, each with indexes $j_1$, $j_2$ and $j_3$. The extension to the case of more subdomains is then similar. Define $\SjallE := \SjunoE + \SjdueE + \SjtreE$.
		Then, the average operator is given by
		\begin{equation*}
			\uiemeanE := \left( \SjallE \right)^{-1} \left( \SjunoE \uEjuno + \SjdueE \uEjdue + \SjtreE \uEjtre \right).
		\end{equation*}
		Proceeding in the same fashion as for the face contribution, it follows
		\begin{equation*}
			\uEjuno - \uiemeanE = \left( \SjallE \right)^{-1} \left[ \left( \SjdueE + \SjtreE \right) \uEjuno - \SjdueE \uEjdue - \SjtreE \uEjtre \right],
		\end{equation*}
		which leads to
		\begin{equation*}
			\seminormSjuno{\RE^T \left(\uEjuno - \uiemeanE \right)}^2 \leq 3 \uEjuno^T \SjunoE \uEjuno \ + 3 \uEjdue^T \SjdueE \uEjdue \ + \uEjtre^T  \SjtreE \uEjtre ,
		\end{equation*} 
		where we use analogous inequalities as in the face case. 
		
		Since we have included the edge averages into the primal space, we have the same average value for the three subdomains.
		By adding and subtracting the mean value over the edges $\uiemeanEjuno$, we can get the 
		estimate for the edges by using \cite[Lemmas $4.16$, $4.17$ and $4.19$]{toselli2006domain}:
		\begin{equation*}
			\uEjuno^T \SjunoE \uEjuno \leq \left[ \tau \sigma_M^{i,e} + H^2 \left( \chi C_m + \tau K_M \right) \right] \left( 1 + \log \dfrac{H}{h} \right) \seminormHone{\harmext{j_1}{\uEjuno}}{\Omega_j}^2.
		\end{equation*}
		In conclusion, the edge estimate gives
		\begin{eqnarray}\nonumber
			\seminormSj{P_D u}^2 \leq \max_{\star = i,e}  \sum_{\mathcal{E} \in \varXi_j^\mathcal{E}} \dfrac{ \tau \sigma_M^{\star} + H^2 \left( \chi C_m + \tau K_M \right) }{\tau \sigma_m^\star} \left( 1 + \log \dfrac{H}{h} \right)  \seminormSjuno{u_{j}}^2,
		\end{eqnarray}
		where the index $j$ collects all contributions from the subdomains that share the edge $\mathcal{E}$.
	\end{proof}

	\section{Parallel numerical results}	\label{results}
	We report here the results of several parallel numerical tests which confirm our theoretical estimates and study the performance of the proposed preconditioners with respect to the discretization parameters. 
	
	The weak scaling tests (with fixed local problem size per processor while the total problem size increases with the processor count) are performed on the supercomputer Galileo from Cineca centre, a Linux Infiniband cluster equipped with 1084 nodes, each with 36 2.30 GHz Intel Xeon E5-2697 v4 cores and 128 GB/node, for a total of 39024 cores. Instead, the strong scaling tests (with fixed total problem size while the local problem size per processor decreases with the inverse of the processor count) are computed on cluster Indaco at the University of Milan, a Linux Infiniband cluster with 16 nodes, each carrying 2 processors Intel Xeon E5-2683 V4 2.1 GHz with 16 cores each, for a total amount of 512 cores. The optimality tests are carried out on the cluster Eos at University of Pavia, a Linux Infiniband cluster with 21 nodes, each carrying 2 processors Intel Xeon Gold 6130 2.1 GHz with 16 cores each, for a total of 672 cores. Our C code is based on PETSc library (\cite{balay2019petsc}) from Argonne National Laboratory. 
	
	In tests 1, 2 and 4 dual-primal preconditioners are applied with the standard rho-scaling, as test 3 shows an almost computational equivalence between rho- and deluxe scaling for this application model, as concerns for non-linear and linear iterations numbers per time step. 
	
	\begin{figure}[!h]
		\begin{center}
			\begin{multicols}{3}
				t = 10 ms 	\vspace*{1mm} \\	\includegraphics[scale=.18]{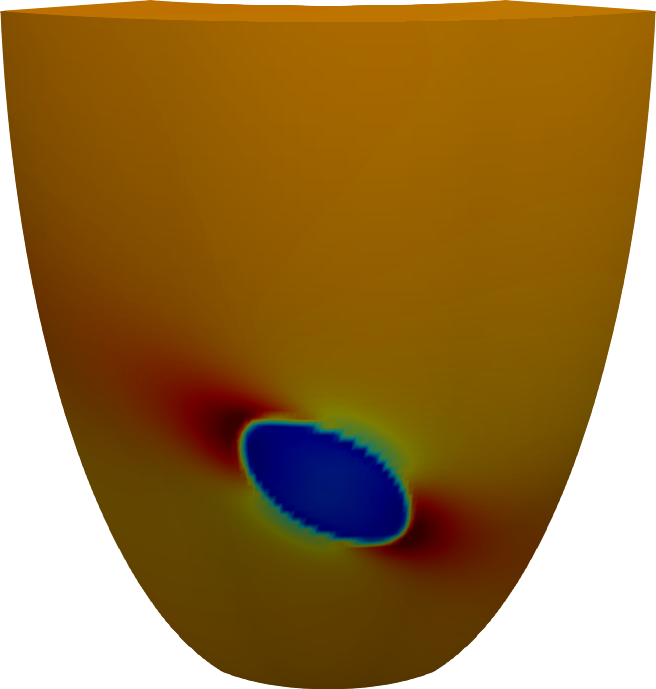} \\	\vspace*{1mm}
				t = 25 ms 	\vspace*{1mm} \\	\includegraphics[scale=.18]{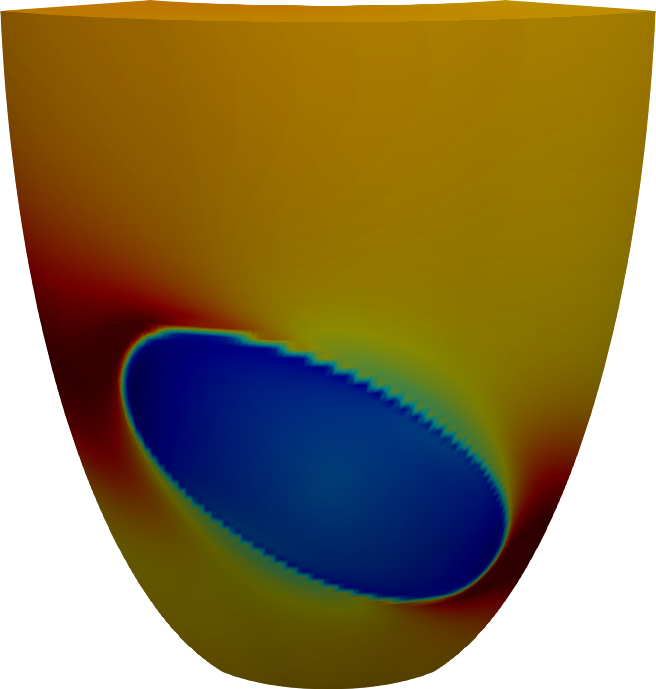} \\	\vspace*{1mm}
				t = 40 ms 	\vspace*{1mm} \\	\includegraphics[scale=.18]{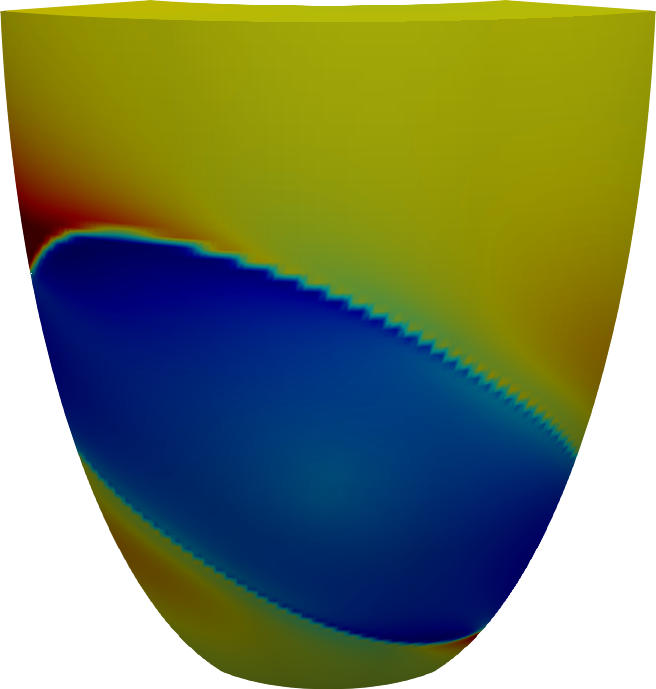} \\
				\columnbreak
				t = 15 ms 	\vspace*{1mm} \\	\includegraphics[scale=.18]{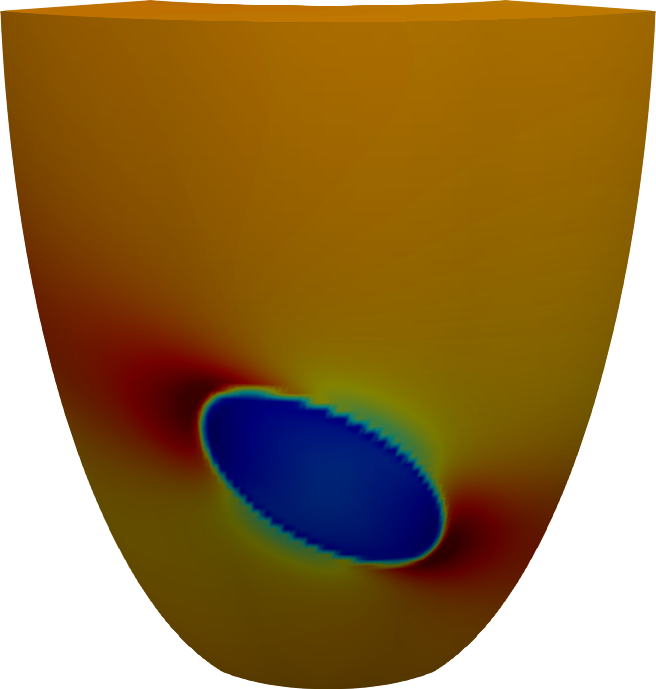} \\	\vspace*{1mm}
				t = 30 ms 	\vspace*{1mm} \\	\includegraphics[scale=.18]{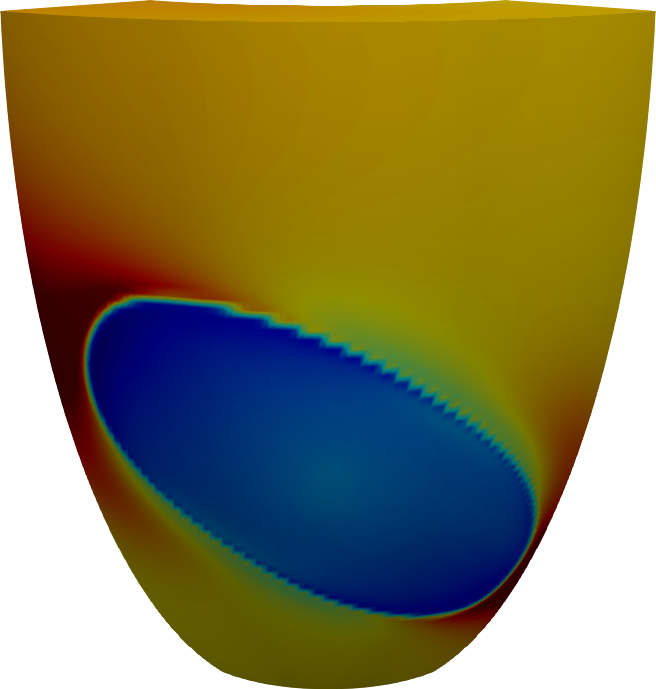} \\	\vspace*{1mm}
				t = 45 ms 	\vspace*{1mm} \\	\includegraphics[scale=.18]{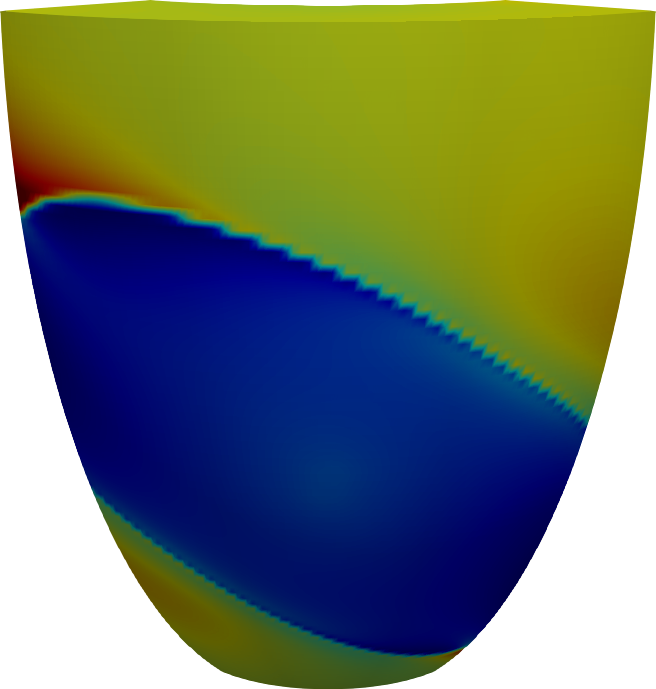} \\
				\columnbreak
				t = 20 ms 	\vspace*{1mm} \\	\includegraphics[scale=.18]{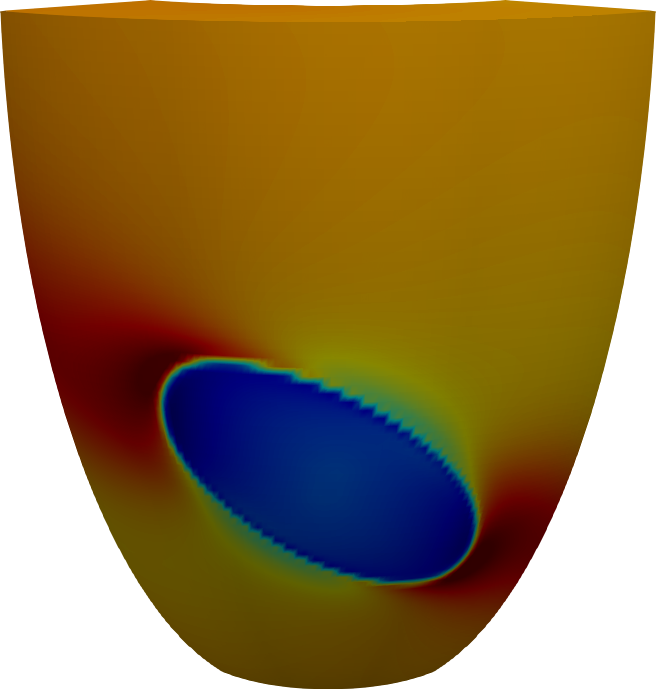} \\	\vspace*{1mm}
				t = 35 ms 	\vspace*{1mm} \\	\includegraphics[scale=.18]{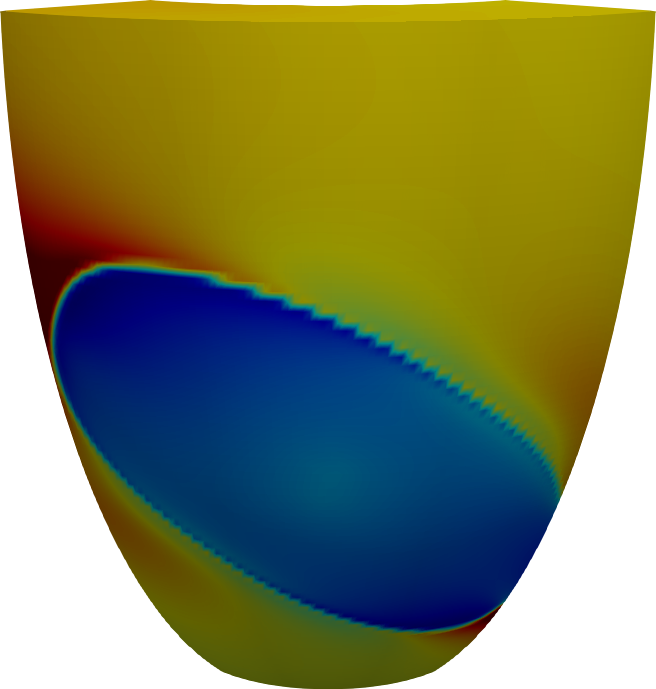} \\	\vspace*{1mm}
				t = 50 ms 	\vspace*{1mm} \\	\includegraphics[scale=.18]{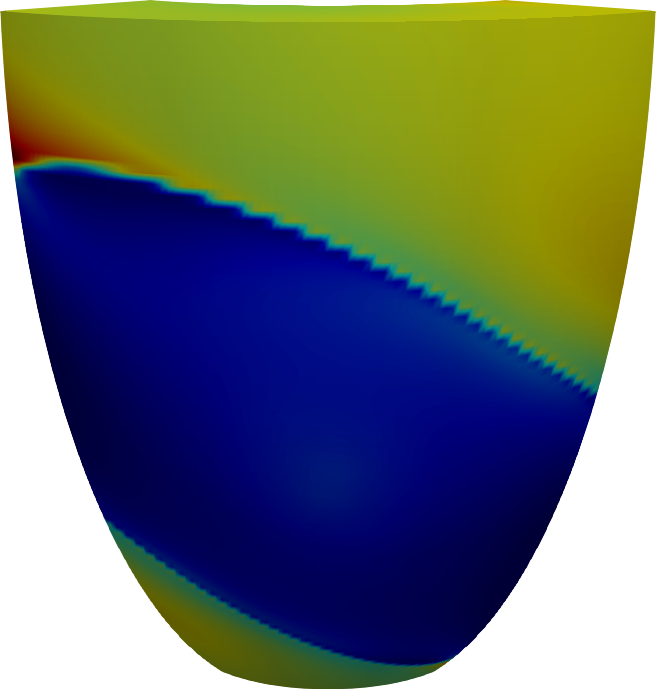}
			\end{multicols}
			\includegraphics[scale=.3]{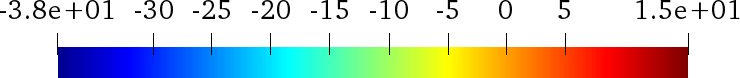}
		\end{center}
		\caption{Snapshots (every 5 ms) of extra-cellular potential $u_e$ time evolution. For each time frame, we report the epicardial view of a portion of the left ventricle, modeled as a truncated ellipsoid. }
		\label{fig_bido_snapshots_ue}
	\end{figure}
	\begin{figure}[!h]
		\begin{center}
			\begin{multicols}{3}
				t = 10 ms 	\vspace*{1mm} \\	\includegraphics[scale=.18]{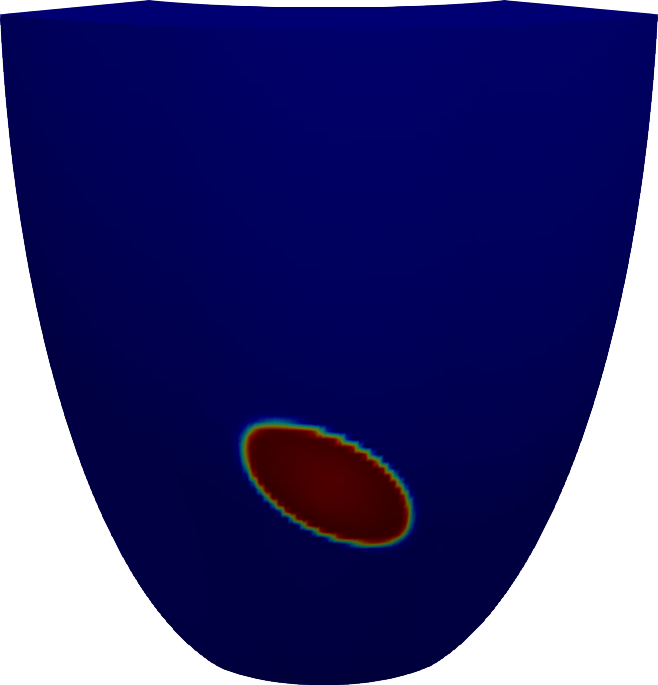} \\	\vspace*{1mm}
				t = 25 ms 	\vspace*{1mm} \\	\includegraphics[scale=.18]{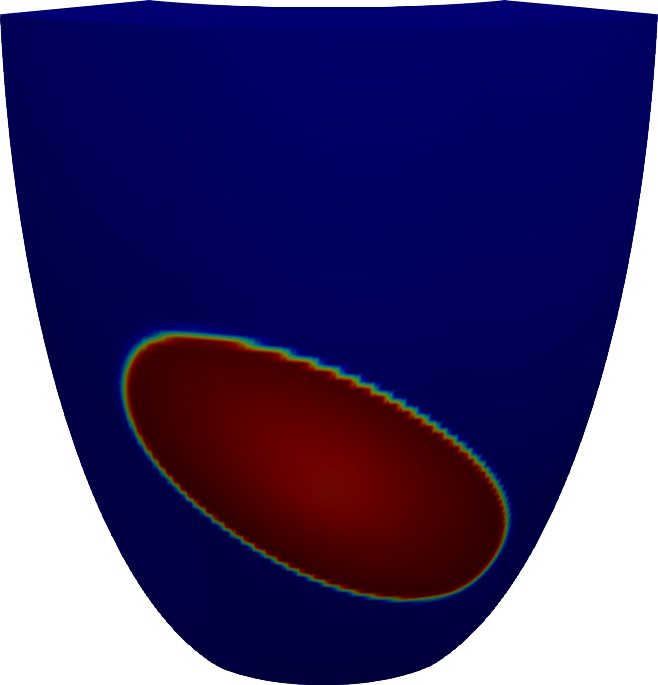} \\	\vspace*{1mm}
				t = 40 ms 	\vspace*{1mm} \\	\includegraphics[scale=.18]{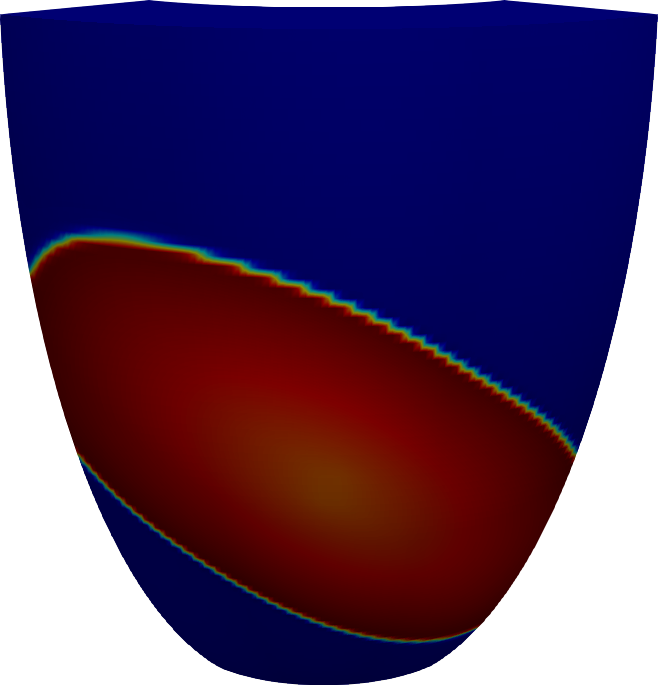} \\
				\columnbreak
				t = 15 ms 	\vspace*{1mm} \\	\includegraphics[scale=.18]{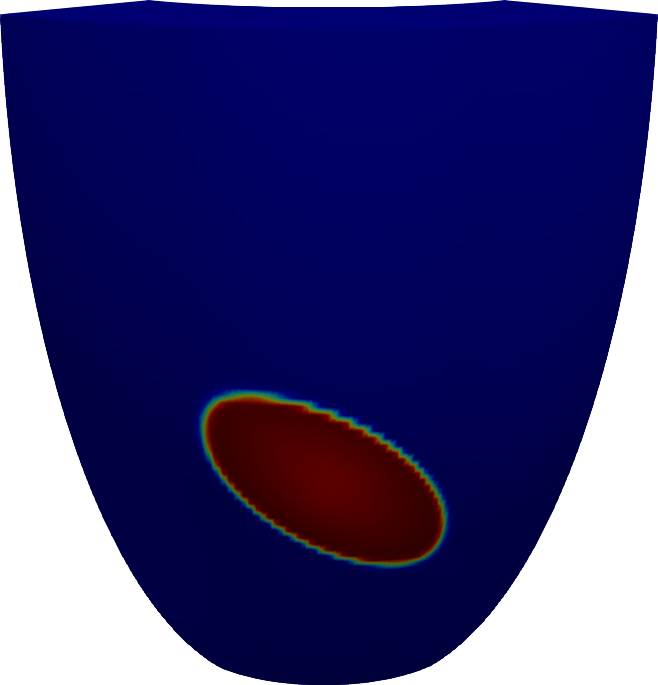} \\	\vspace*{1mm}
				t = 30 ms 	\vspace*{1mm} \\	\includegraphics[scale=.18]{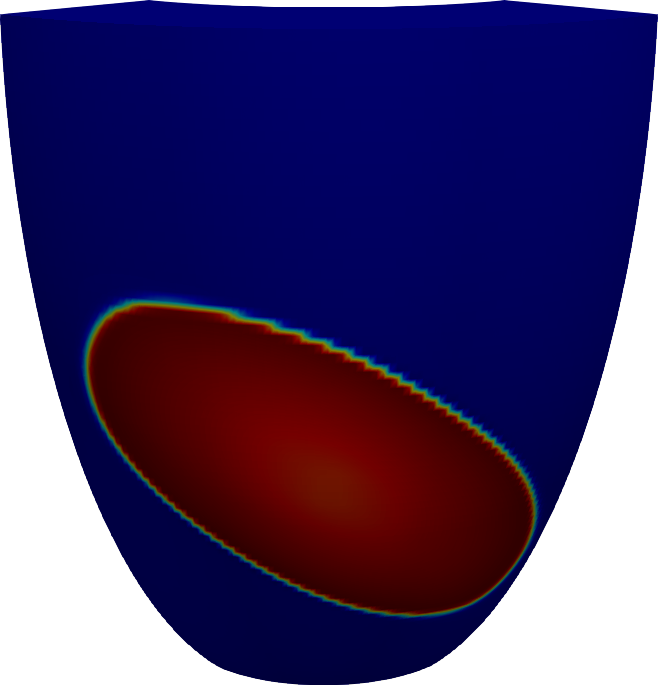} \\	\vspace*{1mm}
				t = 45 ms 	\vspace*{1mm} \\	\includegraphics[scale=.18]{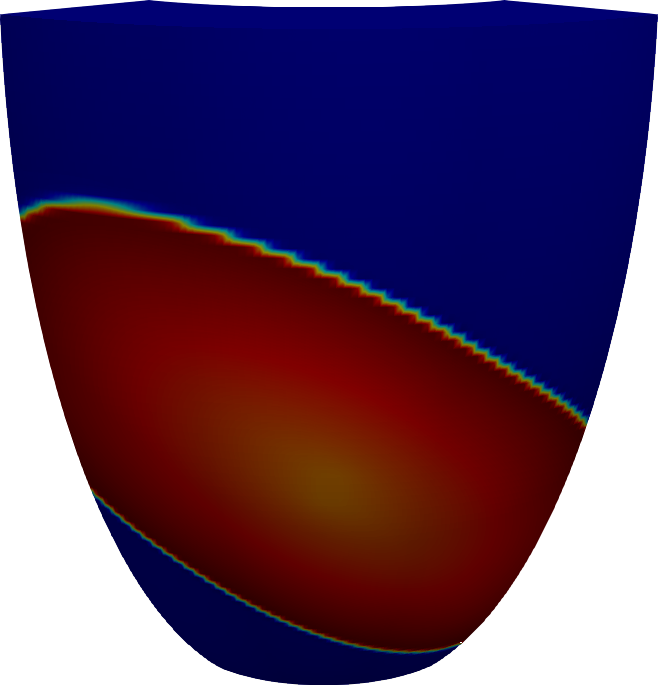} \\
				\columnbreak
				t = 20 ms 	\vspace*{1mm} \\	\includegraphics[scale=.18]{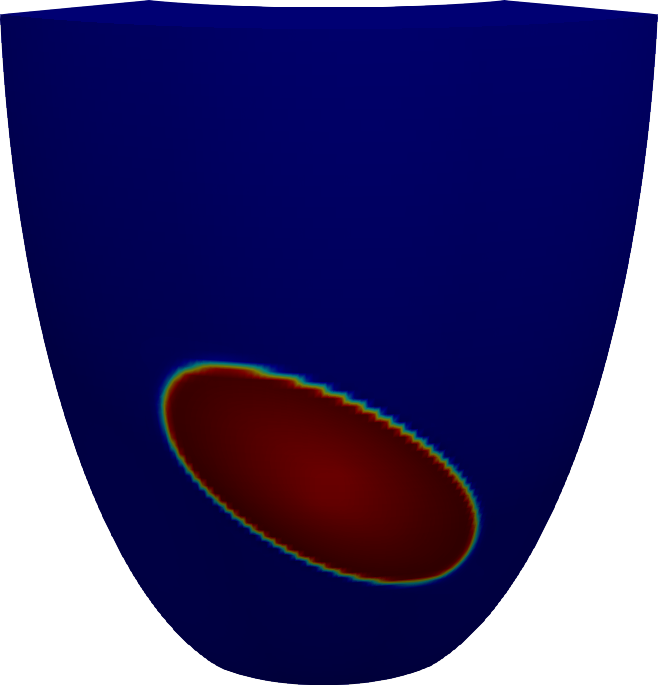} \\	\vspace*{1mm}
				t = 35 ms 	\vspace*{1mm} \\	\includegraphics[scale=.18]{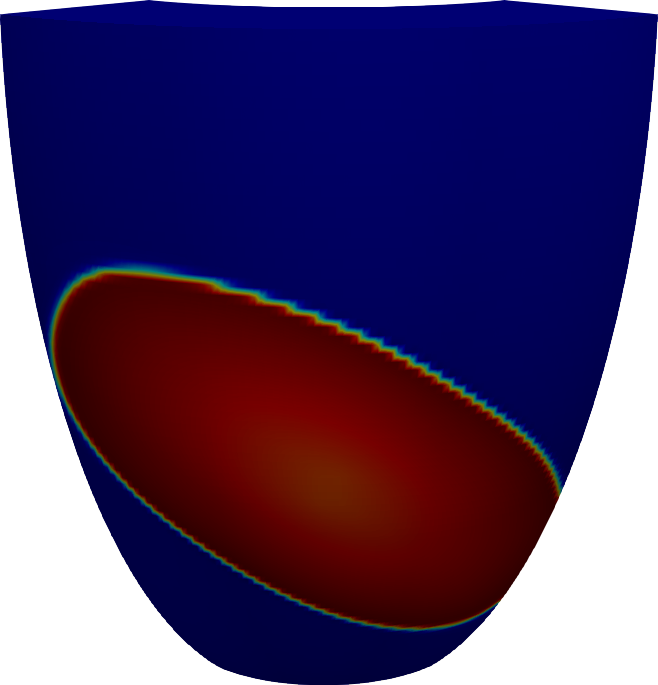} \\	\vspace*{1mm}
				t = 50 ms 	\vspace*{1mm} \\	\includegraphics[scale=.18]{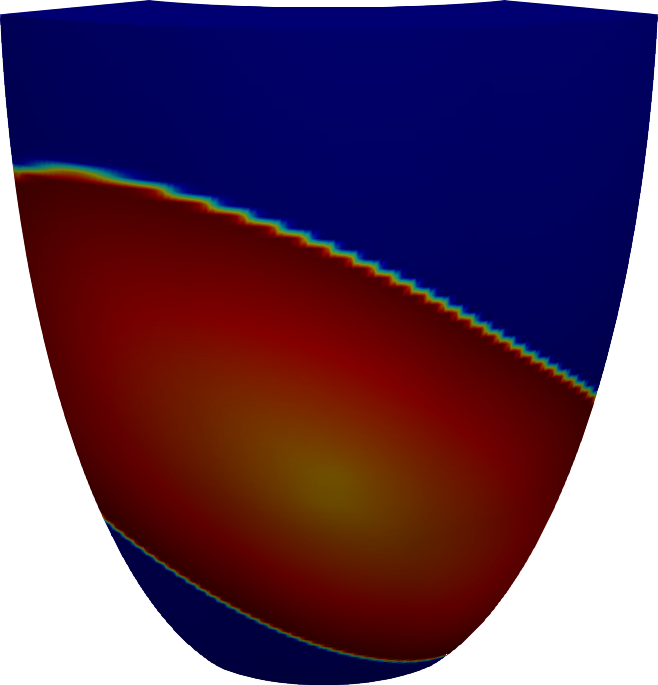}
			\end{multicols}
			\includegraphics[scale=.3]{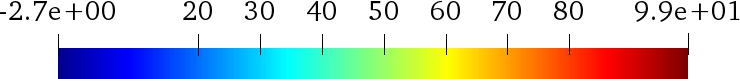}
		\end{center}
		\caption{Snapshots (every 5 ms) of transmembrane potential $v$ time evolution. For each time frame, we report the epicardial view of a portion of the left ventricle, modeled as a truncated ellipsoid. }
		\label{fig_bido_snapshots_v}
	\end{figure}
	
	All our numerical experiments are carried both on a thin slab and on an idealized left ventricular geometry, modeled as a truncated ellipsoid. 
	The latter is described in ellipsoidal coordinates by the parametric equations
	\begin{eqnarray}
		\begin{dcases}
			\mathbf{x} = a (r) \cos \theta \cos \varphi,		&\theta_\text{min} \leq \theta \leq \theta_\text{max},		\nonumber \\
			\mathbf{y} = b (r) \cos \theta \sin \varphi, 		&\varphi_\text{min} \leq \varphi \leq \varphi_\text{max}, 	\nonumber  \\
			\mathbf{z} = c (r) \sin \varphi ,	\qquad			 &0 \leq r \leq 1,		\nonumber 
		\end{dcases}
	\end{eqnarray}
	where $a (r)  = a_1 + r (a_2 - a_1)$, $b (r)  = b_1 + r (b_2 - b_1)$ and $c (r)  = c_1 +r (c_2 - c_1)$ with $a_{1,2}$, $b_{1,2}$ and $c_{1,2}$ given coefficients defining the main axes of the ellipsoid.  
	
	The fibers rotate intramurally linearly with the depth for a total amount of $120^o$ proceeding counterclockwise from epicardium ($r = 1$, outer surface of the truncated ellipsoid) to endocardium ($r=0$, inner surface). 
	Regarding the physiological coefficients in Table \ref{physiolcoeff}, we refer to the original paper \cite{rogers1994collocation} for the parameters of the ionic membrane model, while we refer to \cite{franzone2014mathematical} for the Bidomain and Monodomain parameters.
	
	\begin{table}[H]
		\centering
		\begin{tabular}{*{5}{r}}
			\hline
			\multicolumn{2}{r}{Bidomain conductivity coeff.}	& &\multicolumn{2}{c}{Ionic parameters}	\\
			\cline{1-2} \cline{4-5} 
			$\sigma_l^i$		  & $3 \times 10^{-3} \Omega^{-1} \text{ cm}^{-1}$ 				&  &G 				 &$1.2$ $\Omega^{-1}$ cm$^{-2}$	\\
			$\sigma_t^i$		 & $3.1525 \times 10^{-4} \Omega^{-1} \text{ cm}^{-1}$		& &$\eta_1$	  	&$4.4$ $\Omega^{-1}$ cm$^{-1}$	\\
			$\sigma_n^i$   		& $3.1525 \times 10^{-5} \Omega^{-1} \text{ cm}^{-1}$	    &  &$\eta_2$  	 &$0.012$	 								\\
			$\sigma_l^e$		& $2 \times 10^{-3} \Omega^{-1} \text{ cm}^{-1}$ 				&  &$v_{th}$	 &13 mV  \\
			$\sigma_t^e$	    & $1.3514 \times 10^{-3} \Omega^{-1} \text{ cm}^{-1}$		&  &$v_p$		  &100 mV \\
			$\sigma_n^e$	   & $6.757 \times 10^{-4} \Omega^{-1} \text{ cm}^{-1}$			&	&$C_m$ 		&$1$ mF/cm$^2$ 		\\
			\hline
		\end{tabular}
		\caption{Conductivity coefficients for the Bidomain model and physiological parameters for the Rogers-McCulloch ionic model.}
	\label{physiolcoeff}
	\end{table}
	
	The external stimulus of $\Iapp = 100$ mA/cm$^3$, needed for the potential to start propagating, is applied for $1$ ms to the surface of the domain representing the endocardium. Instead, if a slab geometry is considered, the stimulus is applied in one corner of the domain, over a spheric volume of radius $0.1$ cm. \\ Figures \ref{fig_bido_snapshots_ue} and \ref{fig_bido_snapshots_v} show the time evolution of the extra-cellular and transmembrane potentials respectively, from the epicardial view of a portion of the idealized left ventricle when the external stimulus $\Iapp$ is applied at an epicardial location.
	We consider insulating boundary conditions, resting initial conditions, and a fixed time step size $\tau = 0.05$ ms.
	
	In order to test the efficiency of our solver on parallel architectures, we also compute the parallel speedup $S_N = \frac{T_1}{T_N}$, the ratio between the runtime $T_1$ on 1 processor and the average runtime $T_N$ on $N$ processors.

	We use the default non-linear solver (SNES) in the PETSc library \cite{balay2019petsc}, which consists in a Newton method with cubic backtracking linesearch. We adopt the default SNES convergence test as stopping criterion, based on the comparison of the $L^2$-norm of the non-linear function at the current iterate and at the current step with specified tolerances.
	Since the linear system arising from the discretization of the Jacobian problem at each Newton step is symmetric, we solve it with the Preconditioned Conjugate Gradient (PCG) method, with the BDDC or FETI-DP preconditioners from the PETSc library or with the {\it Boomer} Algebraic MultiGrid (bAMG) preconditioner from the Hypre library.  
	We use the default Hypre parameters strong threshold = 0.25, number of smoothing levels = 25 and  number of levels of aggressive coarsening = 0.
	In the strong scaling tests, when testing the performance of the proposed solver against two different ionic models (Rogers-McCulloch and Luo-Rudy phase 1), the Generalized Minimal Residual (GMRES) method is applied. The convergence criteria of the linear solver is based on the decreasing of the residual norm (default from PETSc). 
	
	All parameters can be found in Table \ref{tolerances}.\\
	
	\begin{table}[!h]
		\label{tolerances}
		\centering
		\begin{tabular}{*{4}{r}}
			\hline
			KSP     & $r_{tol} = 1e^{-08}$     & $a_{tol} = 1e^{-10}$     & $d_{tol}= 1e^{+04}$        \\
			SNES    & $r_{tol} = 1e^{-04}$     & $a_{tol} = 1e^{-08}$     & $s_{tol}= 1e^{-08}$        \\
			\hline
		\end{tabular}
	\caption{PETSc SNES and KSP tolerances. $r_{tol}$ is the relative convergence tolerance, $a_{tol}$ is the absolute convergence tolerance, $d_{tol}$ is the KSP divergence convergence tolerance and $s_{tol}$ is the convergence tolerance related to the solution change between Newton steps. }
	\end{table}

	\paragraph{\bf Test 1: weak scaling.}
	The first set of tests we report here is a weak scaling test on both slab and ellipsoidal domain, performed on Galileo cluster. For both cases, we fix the local mesh size to $16\cdot 16\cdot 16$ and we increase the number of subdomains from $32$ to $2048$, thus resulting in an increasing slab geometry and in an increasing portion of ellipsoid. 
	From Tables \ref{table_bido_weak_slab} and \ref{table_bido_weak_ell}, it is evident how the dual-primal algorithms have a better performance than bAMG: the average number of linear iteration per Newton iteration (lit) is clearly lower and does not increase with the number of subdomains, except for BDDC on the slab domain, where the linear iterations increase unexpectedly. Moreover, the reported average CPU times (in seconds) per Newton step (see also Fig. \ref{fig_bido_weak_time}) are slightly better for the BDDC and FETI-DP preconditioners, except for FETI-DP on the slab domain and 2048 processors. 
	In the harder ellipsoidal tests, both BDDC and FET-DP are scalable and outperform bAMG when the number of processors increases past 128, indicating lower computational complexity and interprocessor communications. 
	
	\begin{table}[h]
		\centering
		\begin{tabular}{*{16}{r}}
			\hline
			\multirow{2}{*}{procs}	&& \multirow{2}{*}{mesh}				& \multirow{2}{*}{dofs}			&& \multicolumn{3}{c}{bAMG}	&& \multicolumn{3}{c}{BDDC}	&& \multicolumn{3}{c}{FETI-DP}\\
			&& 				& 		&& nit 	& lit	& time		&& nit	& lit	& time		&& nit	& lit	& time\\
			\cline{1-1} \cline{3-4} \cline{6-8} \cline{10-12} \cline{14-16}
			32		&& $64\cdot 64\cdot 32$  	 & 278,850		&& 1.25	& 106	& 4.9		&& 1.0	& 22	& 6.1		&& 1.25	& 10	& 6.0\\
			64		&& $128\cdot 64\cdot 32$	 & 553,410		&& 1.25	& 132	& 6.8		&& 1.0	& 27	& 6.2		&& 1.25	& 11	& 6.0\\
			128		&& $128\cdot 128\cdot 32$	& 1,098,306	&& 1.25	& 180	& 9.4		&& 1.0	& 32	& 7.6		&& 1.25	& 10	& 7.4\\
			256		&& $256\cdot 128\cdot 32$	& 2,188,098	&& 1.25	& 237	& 15.2		&& 1.0	& 39	& 7.2		&& 1.25	& 10	& 7.9\\
			512		&& $256\cdot 256\cdot 32$	& 4,359,234	&& 1.25	&318 	& 20.0		&& 1.0	& 48	& 10.1		&& 1.25	& 10	& 11.1\\
			1024	&& $512\cdot 256\cdot 32$	& 8,701,506	&& 1.25	&405	& 29.6		&& 1.0	& 63	& 13.8		&& 1.25	& 10	& 18.7\\
			2048	&& $512\cdot 512\cdot 32$	& 17,369,154	&& 1.25	&536	& 40.3		&& 1.0	& 78	& 33.5		&& 1.25	& 10	& 63.2\\
			\hline
		\end{tabular}	
		\caption{Weak scaling test for the Bidomain decoupled solver on the cluster Galileo.  Slab domain, local mesh $16\cdot 16\cdot 16$ elements. Simulations of 2 ms of cardiac activation with $dt = 0.05$ ms (40 time steps). Comparison of Newton-Krylov solvers with boomerAMG (bAMG), BDDC and FETI-DP preconditioners. Average Newton iterations per time step (nit); average conjugate gradient iterations per Newton iteration (lit); average CPU solution time per time step (time) in seconds.}
	\label{table_bido_weak_slab}
		\vspace{5mm}
		\centering
		\begin{tabular}{*{16}{r}}
			\hline
			\multirow{2}{*}{procs}	&& \multirow{2}{*}{mesh}				& \multirow{2}{*}{dofs}			&& \multicolumn{3}{c}{bAMG}	&& \multicolumn{3}{c}{BDDC}	&& \multicolumn{3}{c}{FETI-DP}\\
			&& 				& 		&& nit 	& lit	& time		&& nit	& lit	& time		&& nit	& lit	& time\\
			\cline{1-1} \cline{3-4} \cline{6-8} \cline{10-12} \cline{14-16}
			32	&& $64\cdot 32\cdot 64$  	& 278,850	&& 1.0	    & 86	& 3.3		&& 1.0	& 30	& 5.4		&& 1.0	& 20	& 4.7\\
			64	&& $64\cdot 64\cdot 64$		& 549,250	&& 1.07	    & 124	& 6.0		&& 1.07	& 37	& 6.2		&& 1.07	& 20	& 6.5\\
			128	&& $64\cdot 128\cdot 64$	& 1,090,050 && 1.20	& 207	& 11.3		&& 1.20	& 26	& 7.5		&& 1.2	& 19	& 6.6\\
			256	&& $64\cdot 256\cdot 64$	& 2,171,650 && 1.42	& 348	& 22.2		&& 1.42	& 25	& 8.7		&& 1.42	& 17	& 10.7\\
			512	&& $128\cdot 256\cdot 64$	& 4,309,890 && 1.42	& 335	& 21.3		&& 1.42	& 27	& 10.5		&& 1.42	& 18	& 11.4\\
			1024 && $256\cdot 256\cdot 64$	& 8,586,370 && \multicolumn{3}{c}{\small out of memory}	&& 1.42	& 28	& 12.5		&& 1.42	& 19	& 11.0\\
			2048 && $512\cdot 256\cdot 64$	& 17,139,330 && \multicolumn{3}{c}{\small out of memory}	&& 1.42	& 28	& 26.6		&& 1.42	& 19	& 21.4\\
			\hline
		\end{tabular}
		\caption{Weak scaling test for the Bidomain decoupled solver on the cluster Galileo.  Ellipsoidal domain, local mesh $16\cdot 16\cdot 16$ elements. Simulations of 2 ms of cardiac activation with $dt = 0.05$ ms (40 time steps). Comparison of Newton-Krylov solvers with boomerAMG (bAMG), BDDC and FETI-DP preconditioners. Average Newton iterations per time step (nit); average conjugate gradient iterations per Newton iteration (lit); average CPU solution time per time step (time) in seconds.}
	\label{table_bido_weak_ell}
	\end{table}
	
	\begin{figure}[!h]
		\centering
		\includegraphics[scale=.45]{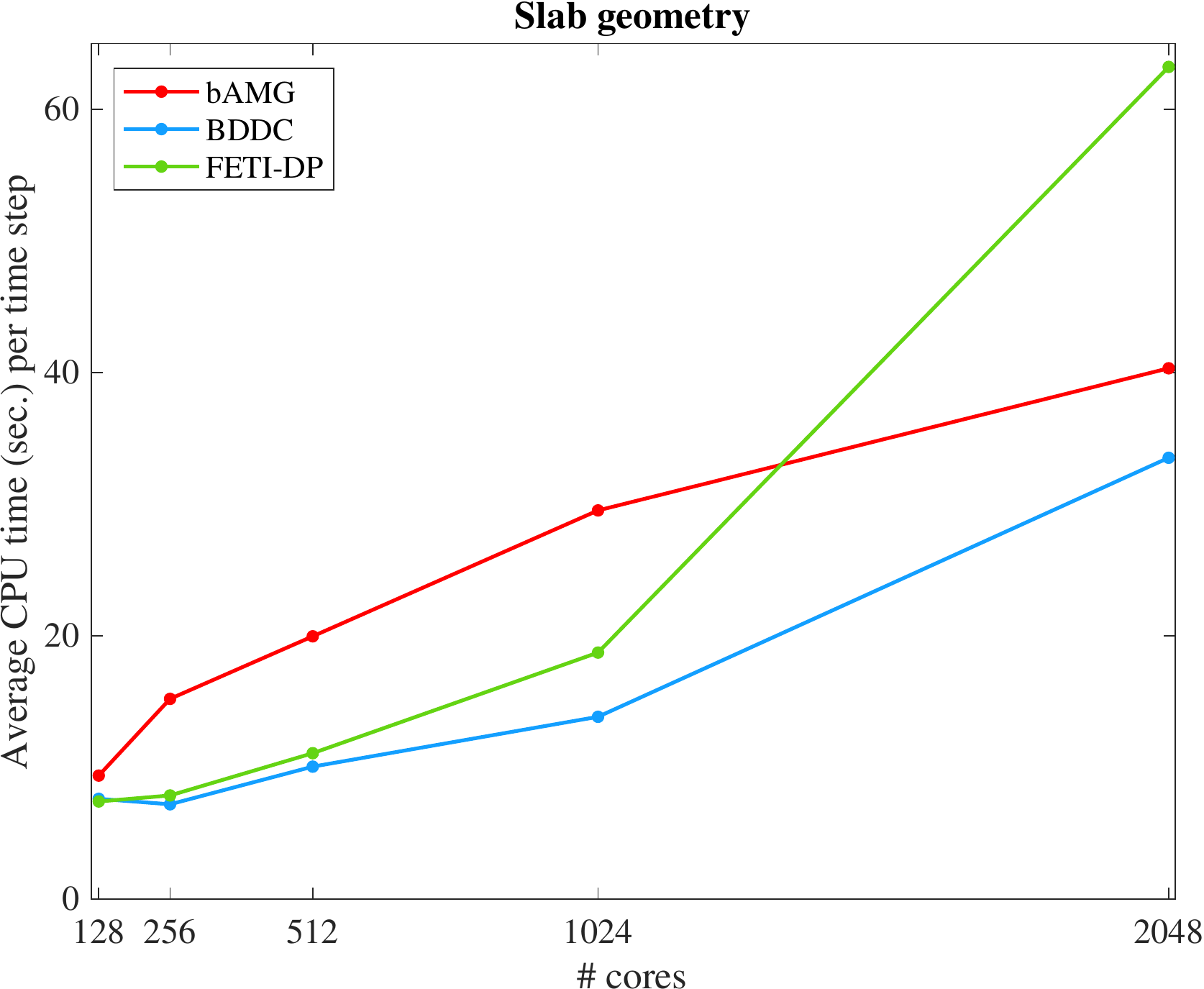}
		\ \ \ \ \  
		\includegraphics[scale=.45]{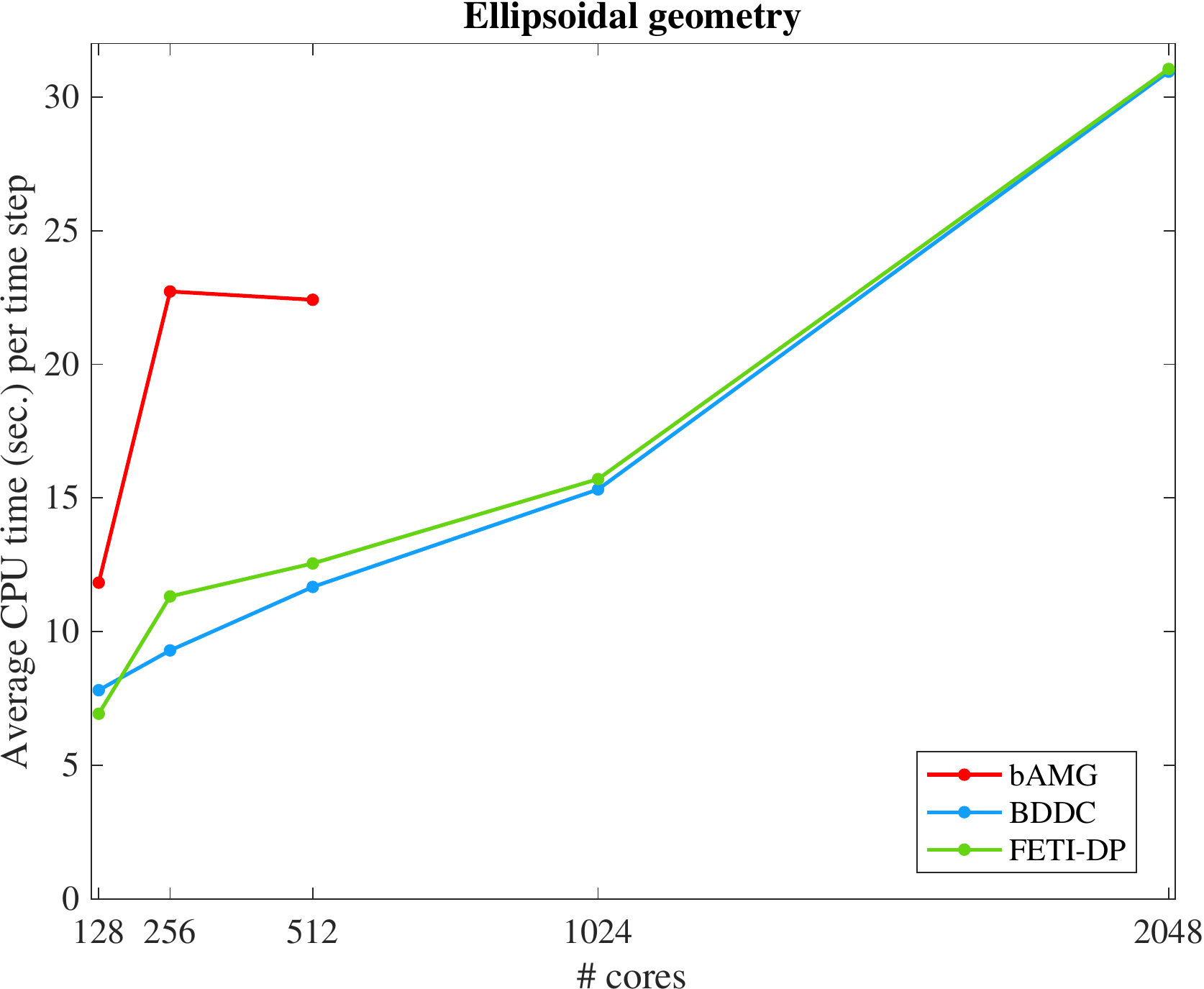}
		\caption{Weak scaling test on the cluster Galileo. Slab (left) and ellipsoidal (right) domains, local mesh $16\cdot 16\cdot 16$ elements. Simulations of 2 ms of cardiac activation with $dt = 0.05$ ms (40 time steps). Comparison of average CPU time per time step, in seconds.}
		\label{fig_bido_weak_time}
	\end{figure}

	\paragraph{\bf Test 2: strong scaling.} 
	We now perform a strong scaling test for the two geometries on Indaco cluster.
	For the thin slab geometry, we fix the global mesh to $192 \cdot 192 \cdot 32$ elements and we increase the number of subdomains. We fix the global mesh to $ 128 \cdot 128 \cdot 64$ elements for the portion of ellipsoid instead.
	We observe from Tables \ref{table_bido_strong_slab_indaco} and \ref{table_bido_strong_ell_indaco}  that, as the local number of dofs decrease, the preconditioner with the better balance in term of average linear iterations and CPU time per time step is FETI-DP.
	In both cases, BDDC and FETI-DP preconditioners outperforms the ideal speedup, while bAMG is sub-optimal (see Fig. \ref{fig_bido_speedup}).  
	Moreover we compare the performance of the Newton-Krylov solver with BDDC preconditioner using the Rogers-McCulloch (RMC) and Luo-Rudy phase 1 (LR1) ionic models in Tables \ref{table_bido_strong_slab_comparison_indaco} and \ref{table_bido_strong_ell_comparison_indaco}. In this case the Jacobian linear system is solved with the GMRES method.   \\
	By increasing the complexity of the ionic current, we observe an increasing in the average number of Newton iterations from 1-2 per time step using the RMC model to 2-3 per time step with the LR1 model. On the other hand, the average numbers of linear iterations per time step for the two ionic models are comparable, indicating that our dual-primal solver retains its good convergence properties even for more complex ionic models. As a consequence, the CPU times for the LR1 model increase due to the increase of nonlinear iterations, but the associated parallel speedups of the two models are comparable.  
	
	\begin{figure}[!h]
		\centering
		\includegraphics[scale=.45]{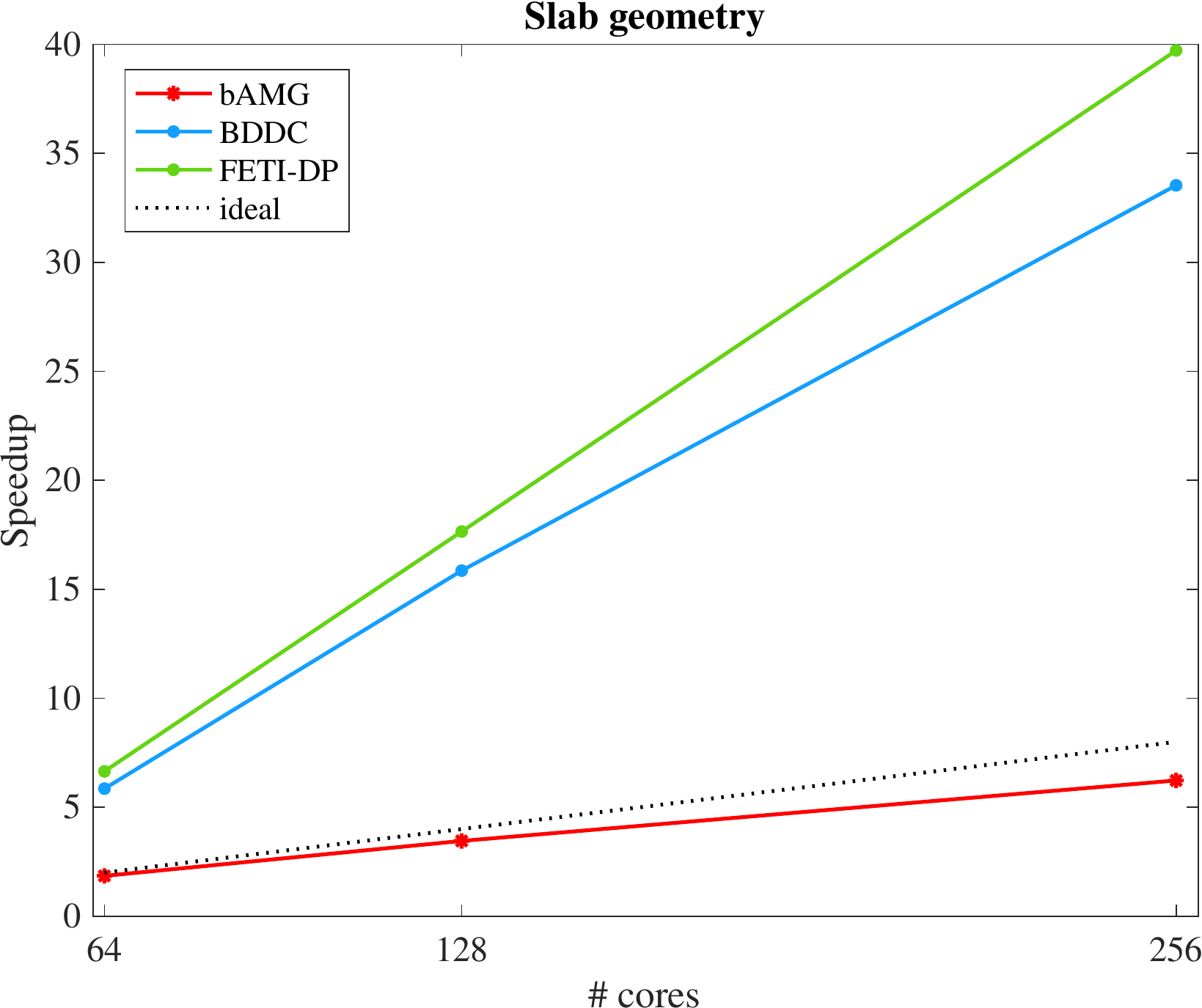}
		\ \ \ \ \  
		\includegraphics[scale=.45]{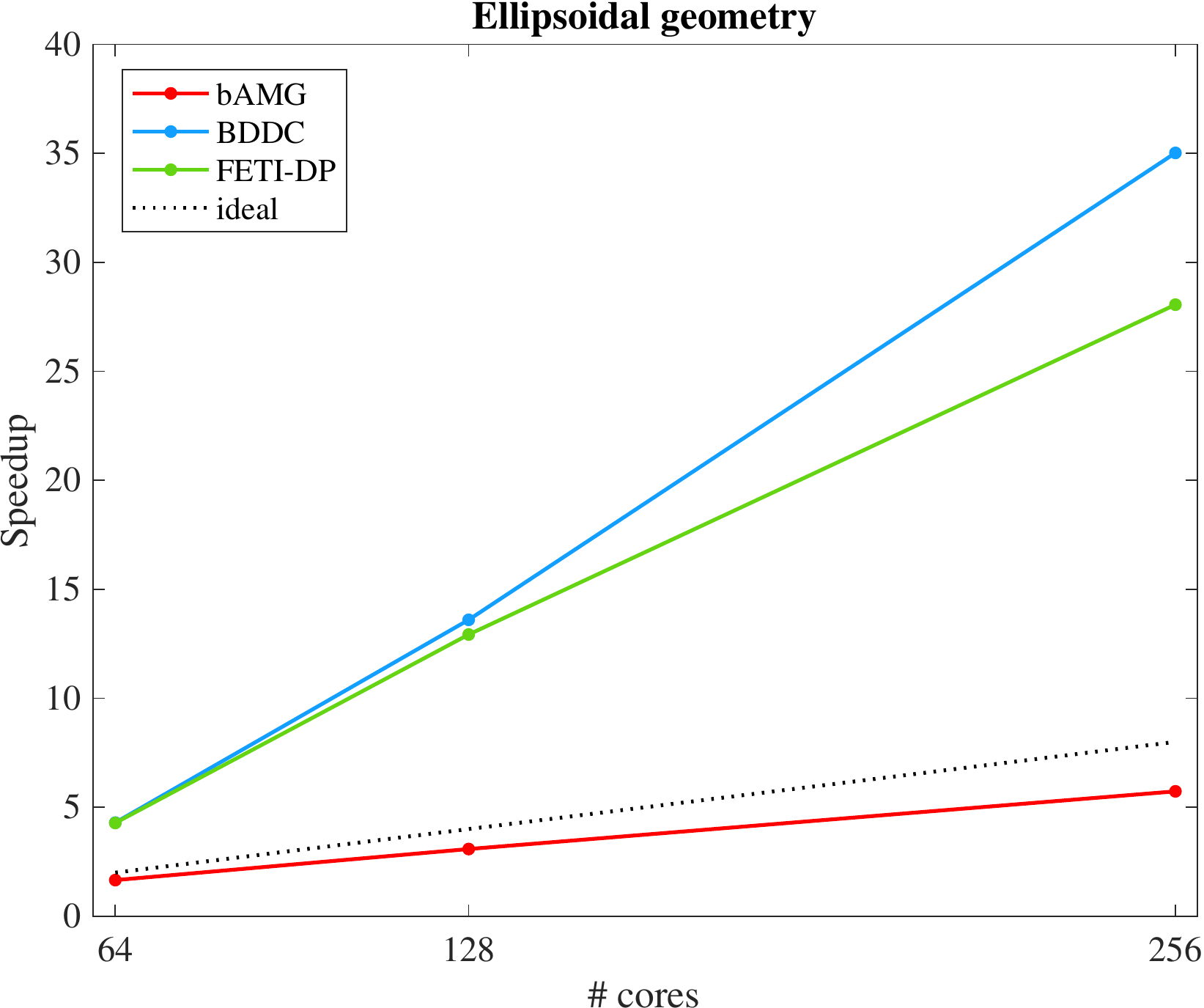}
		\caption{Strong scaling test on the cluster Indaco. Left: slab domain with global mesh $192\cdot 192\cdot 32$. Right: ellipsoidal domain with global mesh $128\cdot 128\cdot 64$. Simulations of 2 ms of cardiac activation with $dt = 0.05$ ms (40 time steps). Comparison of actual parallel speedup computed with respect to 32 cores (ideal speedup dotted). Performed on Indaco cluster.}
		\label{fig_bido_speedup}
	\end{figure}
	
	\begin{table}[h]
		\centering
		\begin{tabular}{*{16}{r}}
			\hline
			\multirow{2}{*}{procs} 		& \multicolumn{5}{c}{bAMG}     	& \multicolumn{5}{c}{BDDC}     	& \multicolumn{5}{c}{FETI-DP}\\
			& nit   & lit   & time	& $S_{32}$  & $S_{64}$		& nit   & lit   & time	& $S_{32}$	& $S_{64}$	& nit   & lit 	& time	& $S_{32}$  & $S_{64}$\\
			\hline
			32     	& 1.25	& 250	& 116.0	& -	   & -	    & 1.0   &27 	&348.2 	& -	    & -		& 1.25	& 11	& 352.2 & -  & -\\
			64      & 1.25	& 252	& 62.7	& 1.8  & -	    & 1.22   &32 	&59.5 	& 5.8   & -		& 1.25	& 17	& 53.0 & 6.6 & -  \\
			128     & 1.25	& 252	& 33.6	& 3.5  & 1.8	& 1.22	   &37 	&21.9 	& 15.8  & 2.7	& 1.25	& 21	& 19.9	& 17.6 & 2.6 \\
			256     & 1.25	& 252	& 18.6	& 6.2  & 3.4    & 1.22   &22 	&10.4 	& 33.5  & 5.7	& 1.25	& 13	& 8.9	& 39.7 & 5.9 \\
			\hline
		\end{tabular}
		\caption{Strong scaling test for the Bidomain decoupled solver on the cluster Indaco.  Slab domain, global mesh $192\cdot 192\cdot 32$ elements, 2,458,434 dofs.	Simulations of 2 ms of cardiac activation with $dt = 0.05$ ms (40 time steps). Comparison of Newton-Krylov solvers with boomerAMG (bAMG), BDDC and FETI-DP preconditioners. Average Newton iterations per time step (nit); average conjugate gradient iterations per Newton iteration (lit); average CPU solution time per time step (time) in seconds; parallel speedup with respect to 32 ($S_{32}$) and 64 ($S_{64}$) processors.}
		\label{table_bido_strong_slab_indaco}
		\vspace{5mm}
		%
		%
		\centering
		\begin{tabular}{*{16}{r}}
			\hline
			\multirow{2}{*}{procs} 		& \multicolumn{5}{c}{bAMG}     	& \multicolumn{5}{c}{BDDC}     	& \multicolumn{5}{c}{FETI-DP}\\
			& nit   & lit   & time	& $S_{32}$  & $S_{64}$		& nit   & lit   & time	& $S_{32}$	& $S_{64}$	& nit   & lit 	& time	& $S_{32}$  & $S_{64}$\\
			\hline
			32     	& 1.92	& 311	& 188.4	& -		& -	    & 1.92	& 36	& 571.8	& -	    & -		& 1.92	& 14	& 558.2	& -     & - \\
			64      & 1.92	& 310	& 113.4	& 1.7   & - 	& 1.92	& 30	& 129.1	& 4.4   & - 	& 1.92	& 19	& 129.7	& 4.3   & - \\
			128     & 1.92	& 310	& 60.5	& 3.1   & 1.9	& 1.92	& 40	& 40.2	& 14.2  & 3.2	& 1.92	& 24	& 42.4	& 13.2  & 3.1 \\
			256     & 1.92	& 311	& 32.2	& 5.8   & 3.1	& 1.92	& 23	& 15.1	& 37.9  & 8.5	& 1.92	& 14	& 19.0	& 29.4  & 6.8 \\
			\hline
		\end{tabular}
		\caption{Strong scaling test for the Bidomain decoupled solver on the cluster Indaco.  Ellipsoidal domain, global mesh $128\cdot 128\cdot 64$ elements, 2,163,330 dofs.	Simulations of 2 ms of cardiac activation with $dt = 0.05$ ms (40 time steps). Comparison of Newton-Krylov solvers with boomerAMG (bAMG), BDDC and FETI-DP preconditioners. Average Newton iterations per time step (nit); average conjugate gradient iterations per Newton iteration (lit); average CPU solution time per time step (time) in seconds; parallel speedup with respect to 32 ($S_{32}$) and 64 ($S_{64}$) processors.}
	\label{table_bido_strong_ell_indaco}
	\end{table}

	\begin{table}[!h]
		\centering
		\begin{tabular}{*{15}{r}}
			\hline
			\multirow{2}{*}{procs}  		&& \multicolumn{5}{c}{RMC}     	&& \multicolumn{5}{c}{LR1}     	\\
			&& nit   & lit   & time	    & $S_{32}$    & $S_{64}$		&& nit   & lit   & time	& $S_{32}$    & $S_{64}$		\\
			\cline{1-1} \cline{3-7} \cline{9-13} 
			32     	&& 1.25	& 16.97	& 220.25	& -		    & -	        && 2.85	& 16.97 	& 502.25 	& -	        & -		\\
			64      && 1.25	& 19.92	& 62.07	    & 3.55      & -	        && 2.85 & 19.57 	& 140.92 	& 3.56      & -		 \\
			128     && 1.25	& 15.3	& 19.2	    & 11.47     & 3.23	    && 2.85	& 15.0 	    & 43.9 	    & 11.44     & 3.21		 \\
			256     && 1.25	& 17.45	& 5.8	    & 37.97     & 10.7      && 2.85 & 29.5 	    & 17.0 	    & 38.08     & 10.68		 \\
			\hline
		\end{tabular}
			\caption{Strong scaling test for the Bidomain decoupled solver on the cluster Indaco.  Slab domain, global mesh $192\cdot 192\cdot 32$ elements, 2,458,434 dofs.	Simulations of 2 ms of cardiac activation with $dt = 0.05$ ms (40 time steps). Comparison of Newton-Krylov solvers with BDDC preconditioner using Rogers-McCulloch (RMC) and Luo-Rudy phase 1 (LR1) ionic models. Average Newton iterations per time step (nit); average conjugate gradient iterations per Newton iteration (lit); average CPU solution time per time step (time) in seconds; parallel speedup ($S_N$) computed with respect to $N=32$ and $N=64$ processors.}
		\label{table_bido_strong_slab_comparison_indaco}
		\vspace{5mm}
		\centering
		\begin{tabular}{*{15}{r}}
			\hline
			\multirow{2}{*}{procs}  		&& \multicolumn{5}{c}{RMC}     	&& \multicolumn{5}{c}{LR1}     	\\
			&& nit   & lit   & time	    & $S_{32}$    & $S_{64}$		&& nit   & lit   & time	& $S_{32}$    & $S_{64}$		\\
			\cline{1-1} \cline{3-7} \cline{9-13} 
			32     	&& 2	& 21.1	& 436.5	& -	            & -		    && 3.95	& 20.5 	& 862.25 	& -		            & -	\\
			64      && 2	& 26.9	& 99.3	& 4.39          & -	        && 3.95 & 25.9 	& 194.87 	& 4.43              & -		 \\
			128     && 2	& 21.4	& 27.27	& 16.0          & 3.64	    && 3.95	& 20.8 	& 53.47 	& 16.12             & 3.64		 \\
			256     && 2	& 30.0	& 8.17	& 53.42         & 12.15     && 3.95 & 29.5 	& 16.08 	& 53.62             & 12.11		 \\
			\hline
		\end{tabular}
		\caption{Strong scaling test for the Bidomain decoupled solver on the cluster Indaco.  Ellipsoidal domain, global mesh $128\cdot 128\cdot 64$ elements, 2,163,330 dofs.	Simulations of 2 ms of cardiac activation with $dt = 0.05$ ms (40 time steps). Comparison of Newton-Krylov solvers with BDDC preconditioner using Rogers-McCulloch (RMC) and Luo-Rudy phase 1 (LR1) ionic models. Average Newton iterations per time step (nit); average conjugate gradient iterations per Newton iteration (lit); average CPU solution time per time step (time) in seconds; parallel speedup ($S_N$) computed with respect to $N=32$ and $N=64$ processors.}
	\label{table_bido_strong_ell_comparison_indaco}
	\end{table}

	\paragraph{\bf Test 3: optimality tests.}
	Tables \ref{table_bido_opt_slab} and \ref{table_bido_opt_ell} report the results of optimality tests, for both slab and ellipsoid geometries, carried on Eos cluster. We fix the number of processors (subdomains) to $4 \cdot 4 \cdot 4$ and we increase the local size $H/h$ from 8 to 24, thus reducing the finite element size $h$. 
	We focus only on the behavior of the BDDC preconditioner, as FETI-DP has been proven to be spectrally equivalent. We consider both scalings ($\rho$-scaling on top, deluxe scaling at the bottom of each table) and we test the solver for increasing primal spaces: V includes only vertex constraints, V+E includes vertex and edge constraints, and V+E+F includes vertex, edge and face constraints. 
	We consider a time interval of 2 ms during the cardiac activation phase. The time step is $dt = 0.05$ ms, for a total amount of 40 time steps. 
	Similar results hold for both geometries. Despite an higher average CPU time when using the deluxe scaling, all the other parameters are quite similar between the two scalings. 
	We observe almost linear dependence of the condition number if the coarsest primal space (i.e. V) is chosen  (see also Figures \ref{fig_bido_opt_slab}, \ref{fig_bido_opt_ell} bottom), while we obtain quasi-optimality if we enrich the primal space by adding edges (V+E) and faces (V+E+F). \\
	
	\begin{table}[h]
		\centering
		\begin{tabular}{*{16}{r}}
			\hline
			\multicolumn{16}{c}{$\rho$-scaling}	\\
			\hline
			\multirow{2}{*}{H/h}	&& \multicolumn{4}{c}{V} 	&& \multicolumn{4}{c}{V+E} 		&& \multicolumn{4}{c}{V+E+F}  \\
			\cline{3-6} \cline{8-11} \cline{13-16}
			&& nlit 	& lit		&time	&cond		&& nlit 	& lit		&time	&cond		&& nlit 	& lit		&time	&cond	\\
			\cline{1-1} \cline{3-6} \cline{8-11} \cline{13-16}
			4			&&1.24 &26 &1.7 &8.4 	 	            &&1.24 &11 &0.9 &1.9 	    &&1.24 &9 &0.9 &1.7 \\
			8			&&1.21 &47 &3.4 &24.1 	                &&1.24 &14 &1.4 &2.6 	    &&1.24 &12 &1.4 &2.5 \\
			12			&&1.04 &66 &10.3 &42.9 	                &&1.17 &18 &6.4 &3.2 	    &&1.21 &15 &4.5 &3.2 \\
			16 			&& \multicolumn{4}{c}{out of memory}    &&1.0 &20 &11.9 &3.7 	    &&1.0 &20 &11.6 &3.7 \\
			20			&& \multicolumn{4}{c}{out of memory}	&&1.0 &22 &34.2 &4.2 	    &&1.0 &20 &32.5 &4.2 \\
			24			&& \multicolumn{4}{c}{out of memory}	&&1.0 &23 &83.1 &4.6 	    &&1.0 &21 &80.5 &4.5 \\
			\hline
			\multicolumn{16}{c}{deluxe scaling}	\\
			\hline
			\multirow{2}{*}{H/h}	&& \multicolumn{4}{c}{V} 	&& \multicolumn{4}{c}{V+E} 		&& \multicolumn{4}{c}{V+E+F}  \\
			\cline{3-6} \cline{8-11} \cline{13-16}
			&& nlit 	& lit		&time	&cond		&& nlit 	& lit		&time	&cond		&& nlit 	& lit		&time	&cond	\\
			\cline{1-1} \cline{3-6} \cline{8-11} \cline{13-16}
			4			&&1.24 &26 &1.9 &8.4 	 	    &&1.24 &11 &0.9 &1.9 	 	    &&1.24 &9 &1.0 &1.7 \\
			8			&&1.24 &47 &4.1 &24.0 	        &&1.24 &14 &1.7 &2.6 	 	    &&1.24 &12 &1.8 &2.5 \\
			12			&&1.07 &65 &14.7 &42.7 	        &&1.17 &18 &5.5 &3.2 	 	    &&1.21 &15 &7.8 &3.2 \\
			16 			&&1.0 &80 &30.0 &63.7 	        &&1.0 &20 &19.1 &3.7 		    &&1.0 &20 &21.4 &3.7 \\
			20			&&1.0 &90 &93.8 &86.3 	        &&1.0 &22 &73.9 &4.2 		    &&1.0 &20 &70.0 &4.2 \\
			24			&&1.0 &99 &211.9 &110.1 	    &&1.0 &24 &205.8 &4.5 	        &&1.0 &21 &247.3 &4.5 \\
			\hline
		\end{tabular}
	\caption{Optimality test on Eos cluster for PCG - BDDC.
	Slab domain, $4 \cdot 4 \cdot 4$ subdomains, increasing local size from $4 \cdot 4 \cdot 4$ to $24 \cdot 24 \cdot 24$. Comparison between different scaling and different primal sets (V = vertices, E = edges, F = faces). Average non-linear iterations (nlit), average number of linear iteration, average CPU time in seconds and average condition number per time step.}	
\label{table_bido_opt_slab}
	\end{table}
	
	\begin{figure}[!h]
		\centering
		\includegraphics[scale=.45]{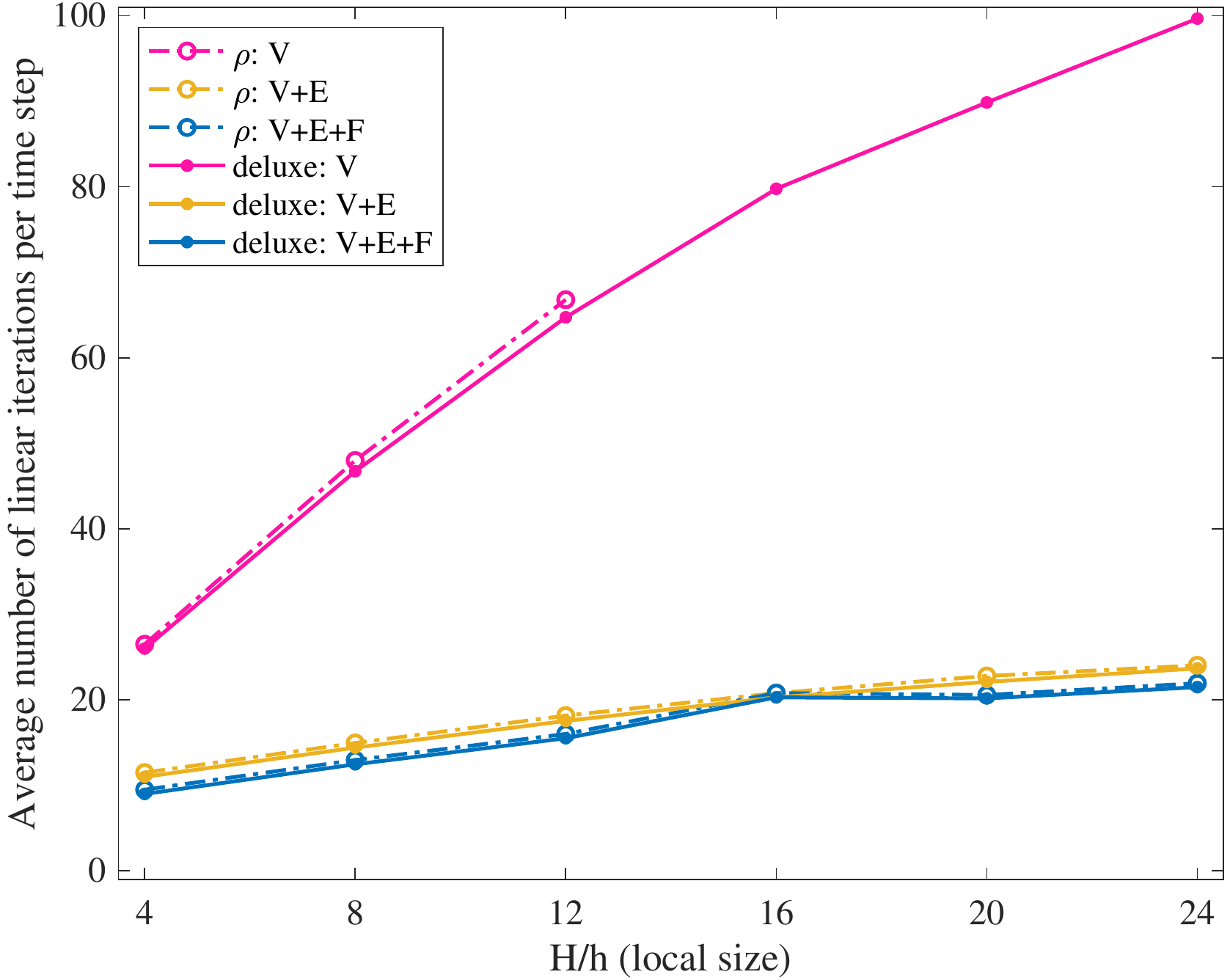}
		\ \ \ \ \  
		\includegraphics[scale=.45]{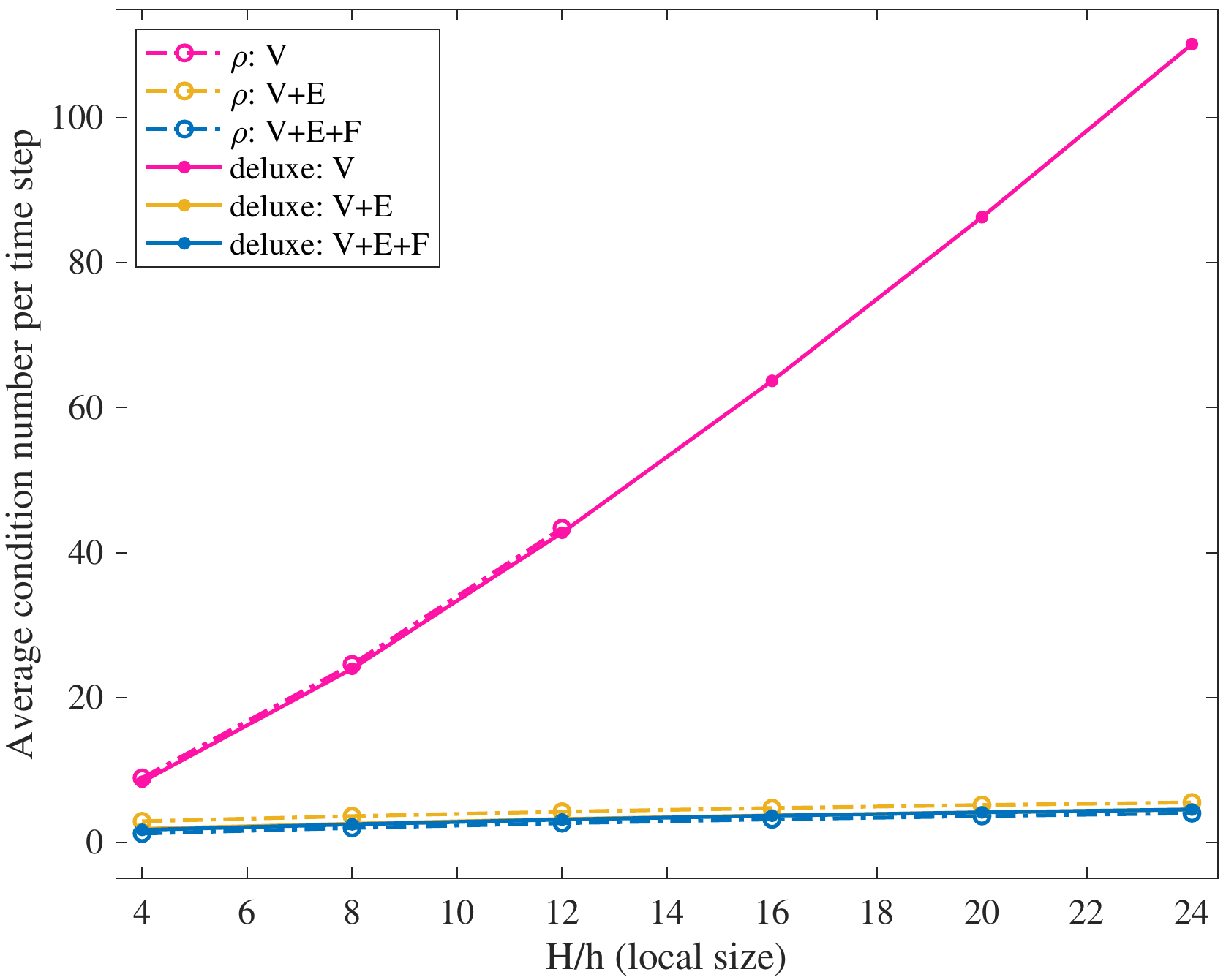}
		\caption{Optimality test on Eos cluster for PCG - BDDC.
			Slab domain, $4 \cdot 4 \cdot 4$ subdomains, increasing local size from $4 \cdot 4 \cdot 4$ to $24 \cdot 24 \cdot 24$. Comparison between different scaling (dash-dotted $\rho$-scaling, continuous {\em deluxe} scaling) and different primal sets (V = vertices, E = edges, F = faces). Average number of linear iterations (left) and average condition numbers (right) per time step.}
		\label{fig_bido_opt_slab}
	\end{figure}
	
	\begin{table}[h]
		\centering
		\begin{tabular}{*{16}{r}}
			\hline
			\multicolumn{16}{c}{$\rho$-scaling}	\\
			\hline
			\multirow{2}{*}{H/h}	&& \multicolumn{4}{c}{V} 	&& \multicolumn{4}{c}{V+E} 		&& \multicolumn{4}{c}{V+E+F}  \\
			\cline{3-6} \cline{8-11} \cline{13-16}
			&& nlit 	& lit		&time	&cond		&& nlit 	& lit		&time	&cond		&& nlit 	& lit		&time	&cond	\\
			\cline{1-1} \cline{3-6} \cline{8-11} \cline{13-16}
			8			&&2.0 &50 	    &5.5 	&30.1 	  &&2.0 &17 &2.4 &4.3 	 	&&2.0 &16 &2.4   &4.0 \\
			12			&&2.0 &66 	    &11.8   &54.6 	  &&2.0 &19 &5.6 &5.4 	    &&2.0 &18 &5.6   &4.9 \\
			16 			&&2.0 &80 	    &33.9   &80.6 	  &&2.0 &21 &16.9 &6.3 	    &&2.0 &21 &16.9 &5.7 \\
			20			&&2.0 &91 	    &90.3   &108.2    &&2.0 &22 &44.7 &6.9 	    &&2.0 &21 &44.9 &6.3 \\
			24			&&1.46 &100     &206.3  &137.2 	  &&1.46 &24 &109.3 &7.5 	&&1.46 &23 &84.0 &6.8 \\
			\hline
			\multicolumn{16}{c}{deluxe scaling}	\\
			\hline
			\multirow{2}{*}{H/h}	&& \multicolumn{4}{c}{V} 	&& \multicolumn{4}{c}{V+E} 		&& \multicolumn{4}{c}{V+E+F}  \\
			\cline{3-6} \cline{8-11} \cline{13-16}
			&& nlit 	& lit		&time	&cond		&& nlit 	& lit		&time	&cond		&& nlit 	& lit		&time	&cond	\\
			\cline{1-1} \cline{3-6} \cline{8-11} \cline{13-16}
			8			&&2.0 &49 &6.7 	&31.0 	 	    &&2.0 &17 &3.1 &4.3 	 	&&2.0 &16 &3.0 &4.0 \\
			12			&&2.0 &54 &27.2 	&54.6 	    &&2.0 &19 &9.6 &5.4 	    &&2.0 &18 &9.6 &4.9 \\
			16 			&&2.0 &79 &54.1 	&80.6 	    &&2.0 &21 &32.1 &6.2 	    &&2.0 &21 &32.7 &5.7 \\
			20			&&2.0 &90 &142.4  &108.2 	    &&2.0 &22 &125.8 &7.0 	    &&2.0 &21 &111.1 &6.3 \\
			24			&&1.46 &99 &329.3  &137.1 	    &&1.46 &24 &236.7 &7.5 	    &&1.46 &22 &247.1 &6.8 \\
			\hline
		\end{tabular}
	\caption{Optimality test on Eos cluster for PCG - BDDC.
	Ellipsoidal domain, $4 \cdot 4 \cdot 4$ subdomains,  increasing local size from $8 \cdot 8 \cdot 8$ to $24 \cdot 24 \cdot 24$. Comparison between different scaling and different primal sets (V = vertices, E = edges, F = faces). Average non-linear iterations (nlit), average number of linear iteration, average CPU time in seconds and average condition number per time step.}	
\label{table_bido_opt_ell}
	\end{table}
	
	\begin{figure}[!h]
		\centering
		\includegraphics[scale=.45]{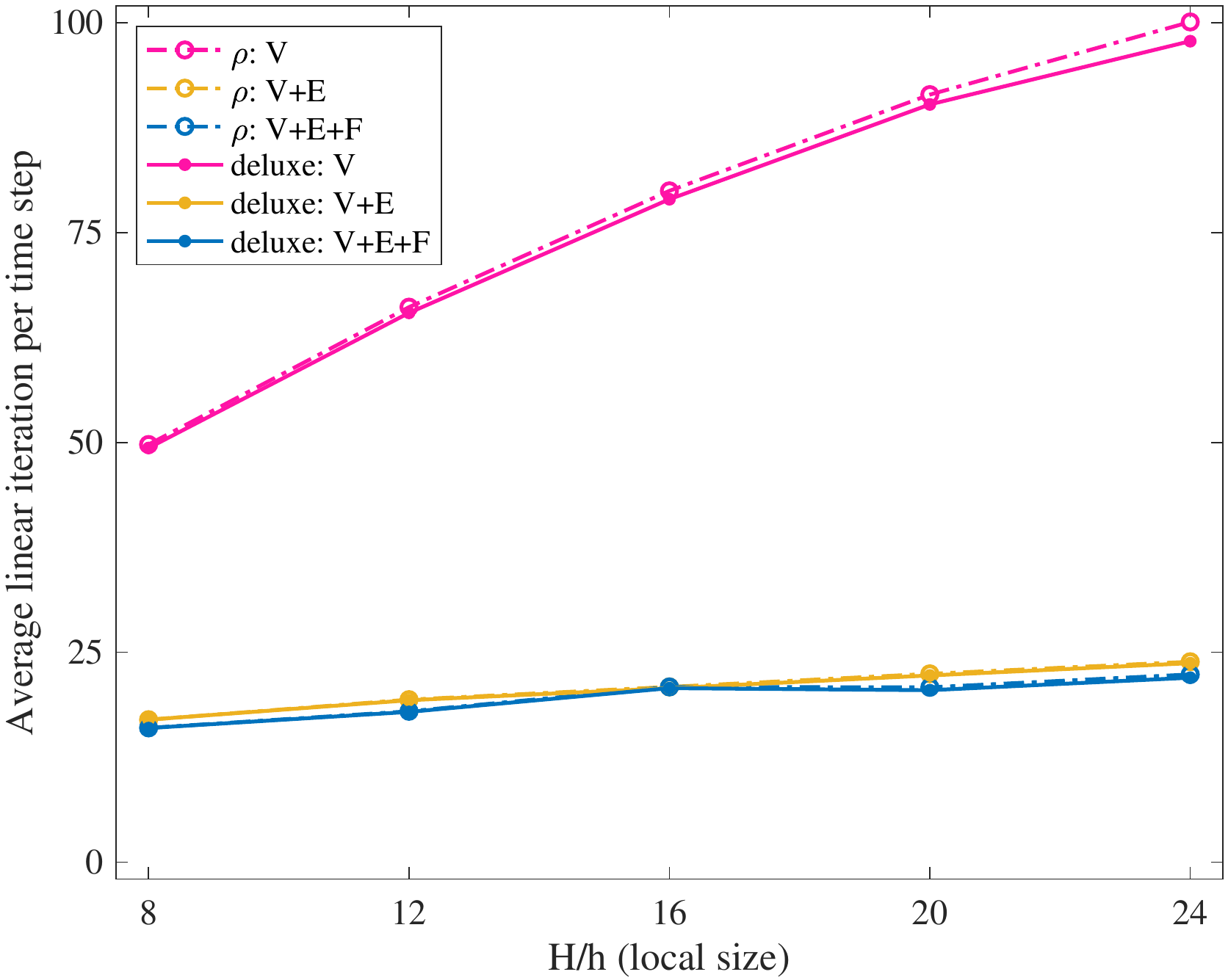}
		\ \ \ \ \  
		\includegraphics[scale=.45]{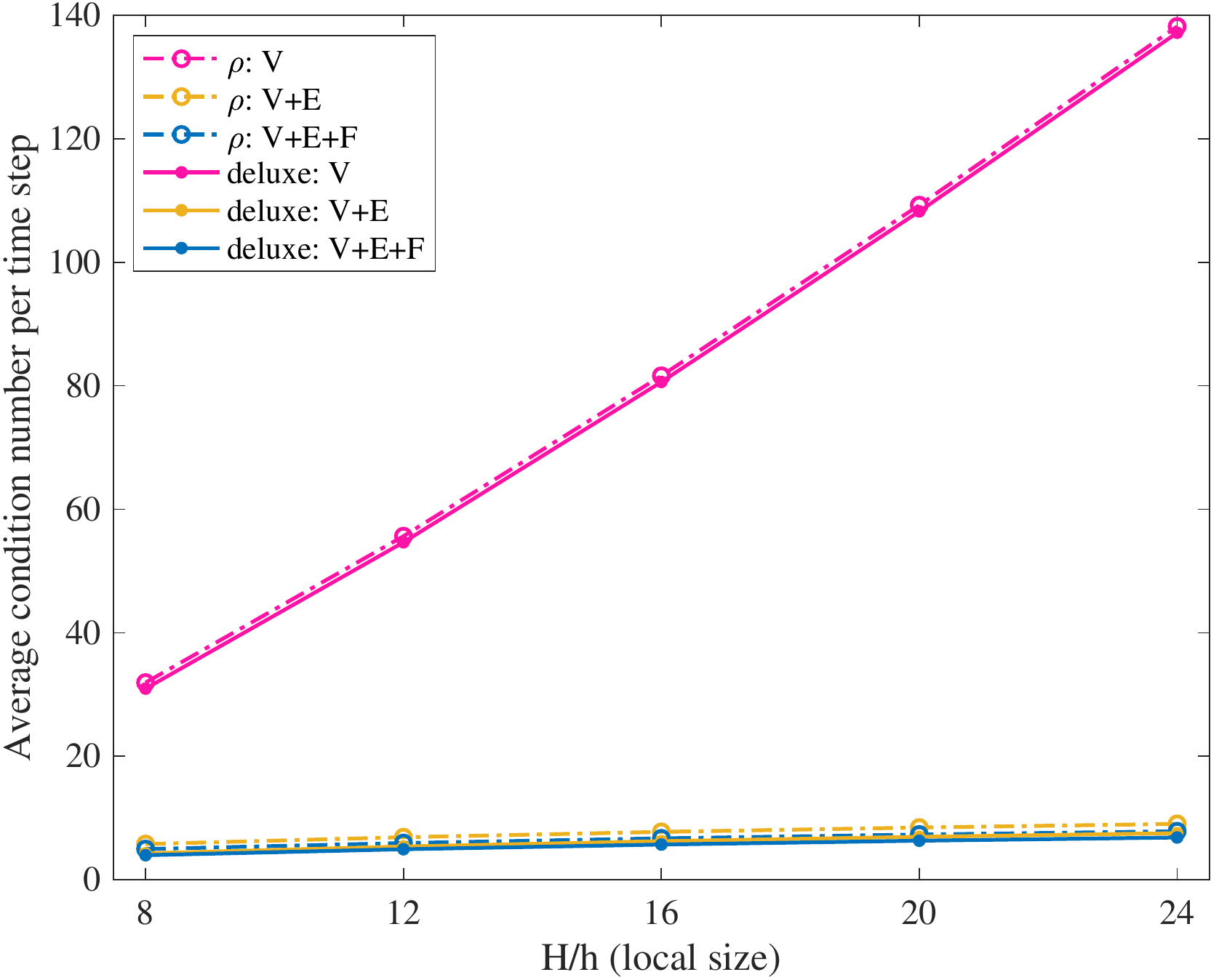}
		\caption{Optimality test on Eos cluster for PCG - BDDC with different scaling (dash-dotted $\rho$-scaling, continuous {\em deluxe} scaling) and primal sets (V = vertices, E = edges, F = faces). Ellipsoidal domain, $4 \cdot 4 \cdot 4$ subdomains, increasing local size from $8 \cdot 8 \cdot 8$ to $24 \cdot 24 \cdot 24$.  Average number of linear iterations (left) and average condition numbers (right) per time step.}
		\label{fig_bido_opt_ell}
	\end{figure}

	\paragraph{\bf Test 4: whole beat (activation - recovery) simulations.} 
	In this last set of tests (performed on the Indaco cluster), we compare the performance of our dual-primal and multigrid preconditioners during a whole beat, i.e. during a complete activation - recovery interval over the computational domain.
	We fix the number of subdomains to $128=8 \cdot 8 \cdot 2$ and the global mesh size to $192 \cdot 96 \cdot 24$, obtaining local problems with 8,450 dofs.  
	We consider either a portion of ellipsoid, defined by $\varphi_{\min} = -\pi/2$, $\varphi_{\max} = 0$, $\theta_{\min} = -3/8 \pi$ and $\theta_{\max} = \pi/8$, or a slab of dimensions $1.92 \times 1.92 \times 0.48$ cm$^3$. The ellipsoid test is on a time interval of $[0,170]$ ms, for a total of 3400 time steps, while the slab test is on the time interval $[0, 120]$ ms, for a total of 2400 time steps. Both time intervals are
	enough to complete the activation and recovery phases over the whole domains, given the short action potential duration of the RMC ionic model considered.
	In Figures \ref{fig_bido_full_ell_indaco} and \ref{fig_bido_full_slab_indaco} we report the trend of the average number of linear iteration per time step during the simulation. The number of iterations remains bounded and almost constant during the test. Moreover, we notice a huge difference between the multigrid  and the dual-primal preconditioners, with a reduction of more than $90\%$ for the latter. 
	If we focus on the trend of the dual-primal preconditioners' average number of linear iterations (Figures \ref{fig_bido_full_ell_indaco} and \ref{fig_bido_full_slab_indaco}, on the right), we see that on both domains FETI-DP is affected by the different phases of the action potential: there is an initial peak during the activation phase, followed by an increase in the number of linear iterations as the potential propagates in the tissue and by a slow decrease as wider portions of tissue return to resting. Similar behavior can be observed for the BDDC preconditioner on the slab domain: there is an initial peak corresponding to the activation phase, followed by a constant period, as the tissue turn to resting. This trend is not visible for BDDC on the ellipsoidal domain, due to the complexity of the geometry.
	We also observe a better performance of the dual-primal preconditioners in terms of average CPU time per time step (see Table \ref{table_bido_full_indaco}). 
	
	\begin{table}[!h]
		\centering
		\begin{tabular}{*{16}{c}}
			\cline{2-16}
			& \multirow{2}{*}{procs}	&& \multirow{2}{*}{dofs} && \multicolumn{3}{c}{bAMG} 	&& \multicolumn{3}{c}{BDDC} 		&& \multicolumn{3}{c}{FETI-DP}  \\
			&&&  && nlit 	& lit		&time	&& nlit 	& lit		&time		&& nlit 	& lit		&time	\\
			\cline{1-2} \cline{4-4} 	\cline{6-8} \cline{10-12} \cline{14-16}
			slab		& 128 		&& 8,450	&& 1.4 	  &185 	  &11.28 	&&1.4 	&19  	& 8.02	&&1.4 	&12		&7.62 \\
			ellipsoid 	& 128		&& 8,450 	&& 1.97   &328 	  &13.24	&& 1.97 & 30 	& 8.85	&& 1.97 & 21	& 8.05 	\\
			\hline
		\end{tabular}
	\caption{Whole beat simulation on time interval $[0,170]$ ms, 3400 time steps for the ellipsoidal domain and time interval $[0,120]$ ms, 2400 time steps for the slab. Fixed number of subdomains $8 \cdot 8 \cdot 2$ and fixed global mesh $ 192 \cdot 96 \cdot 24$. Comparison of average Newton steps, average linear iterations and average CPU time (in sec.) per time step.}	
\label{table_bido_full_indaco}
	\end{table}
	
	\begin{figure}[!h]
		\centering
		\includegraphics[scale=.45]{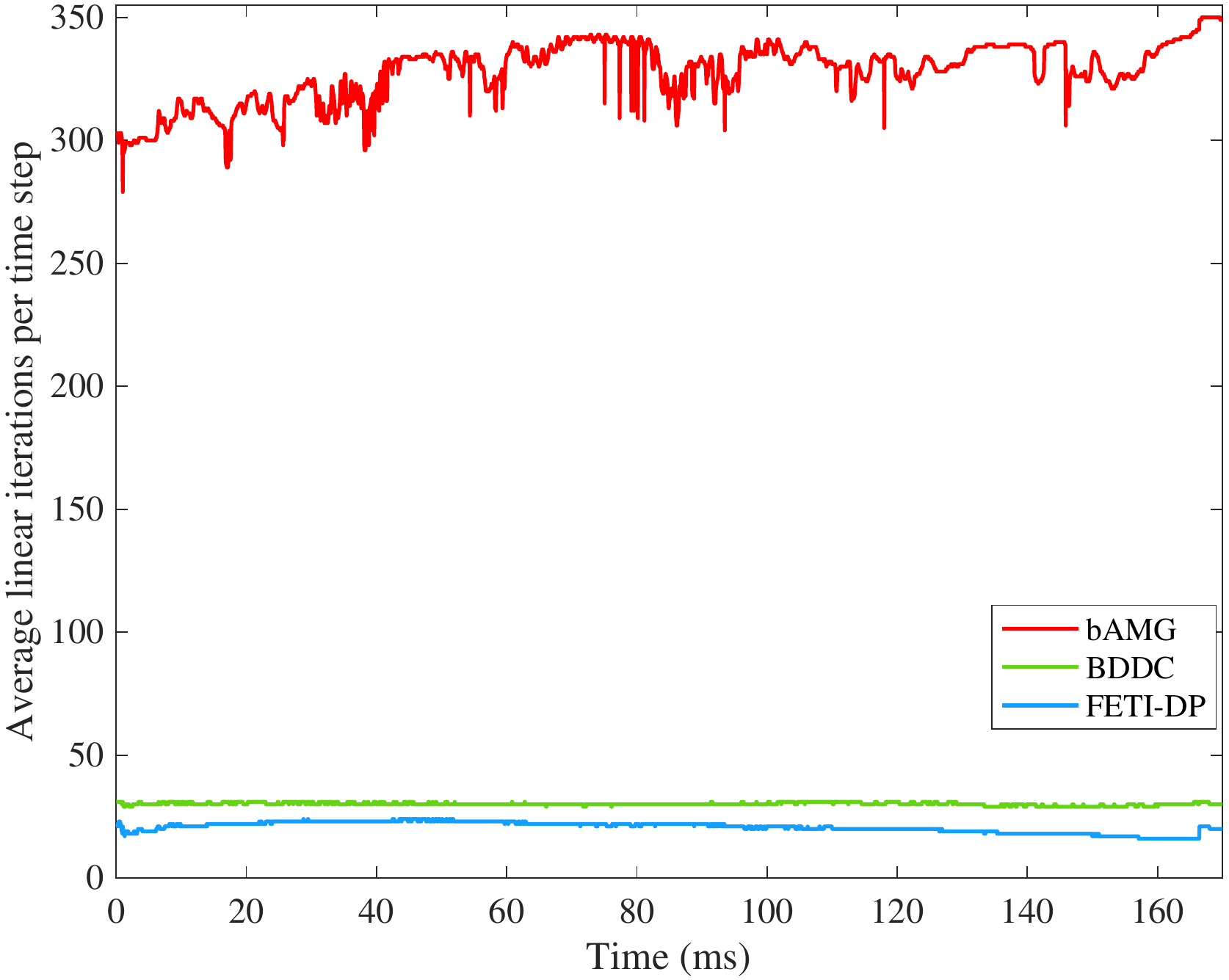}
		\ \ \ \ \  
		\includegraphics[scale=.45]{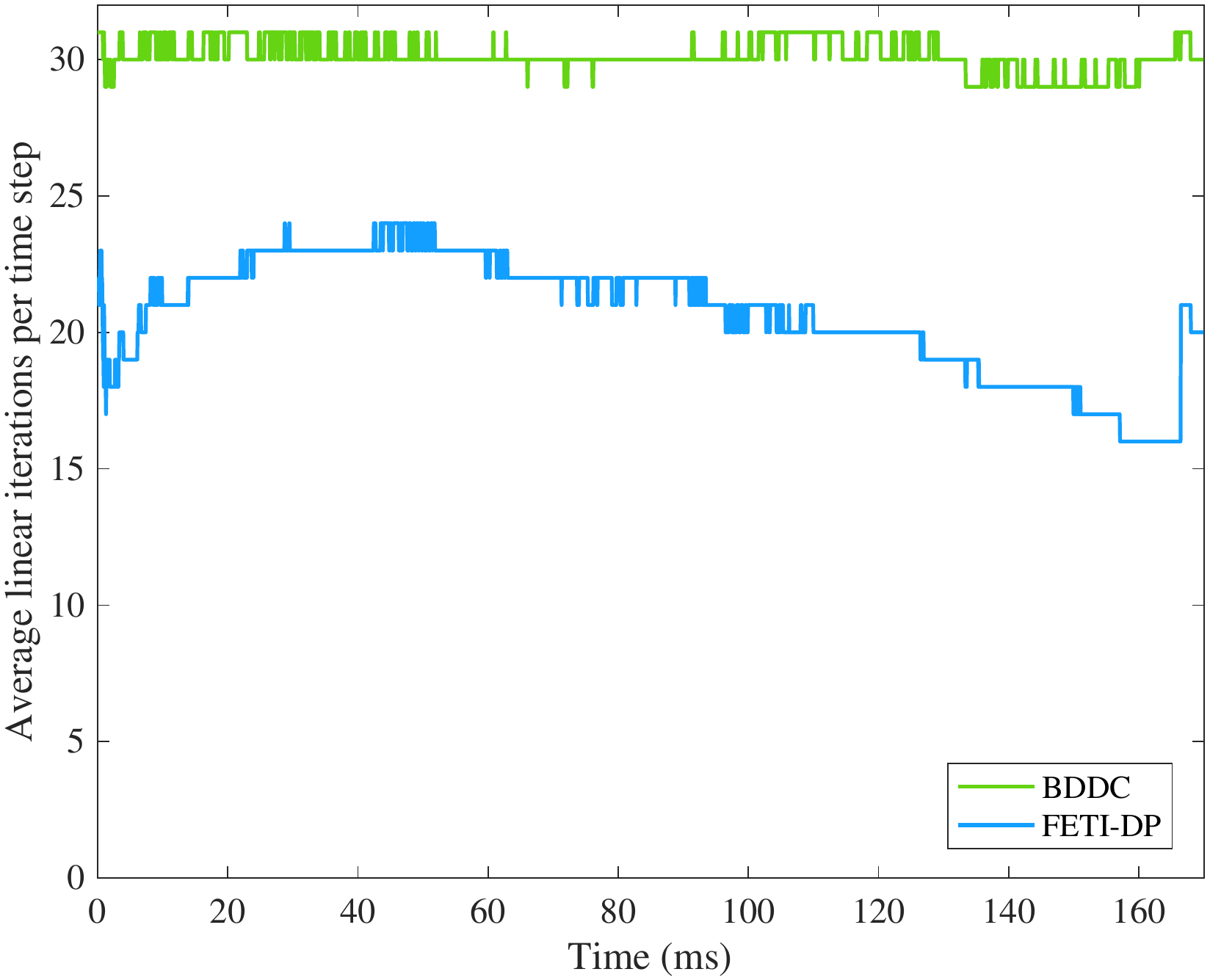}
		\caption{Whole beat simulation on ellipsoidal domain, time interval $[0,170]$ms, 3400 time steps. Fixed number of subdomains $8 \cdot 8 \cdot 2$ and fixed global mesh $ 192 \cdot 96 \cdot 24$. Average number of linear iterations per time step (left), zoom over dual-primal preconditioner (right).}
		\label{fig_bido_full_ell_indaco}
	\end{figure}
	
	\begin{figure}[!h]
		\centering
		\includegraphics[scale=.45]{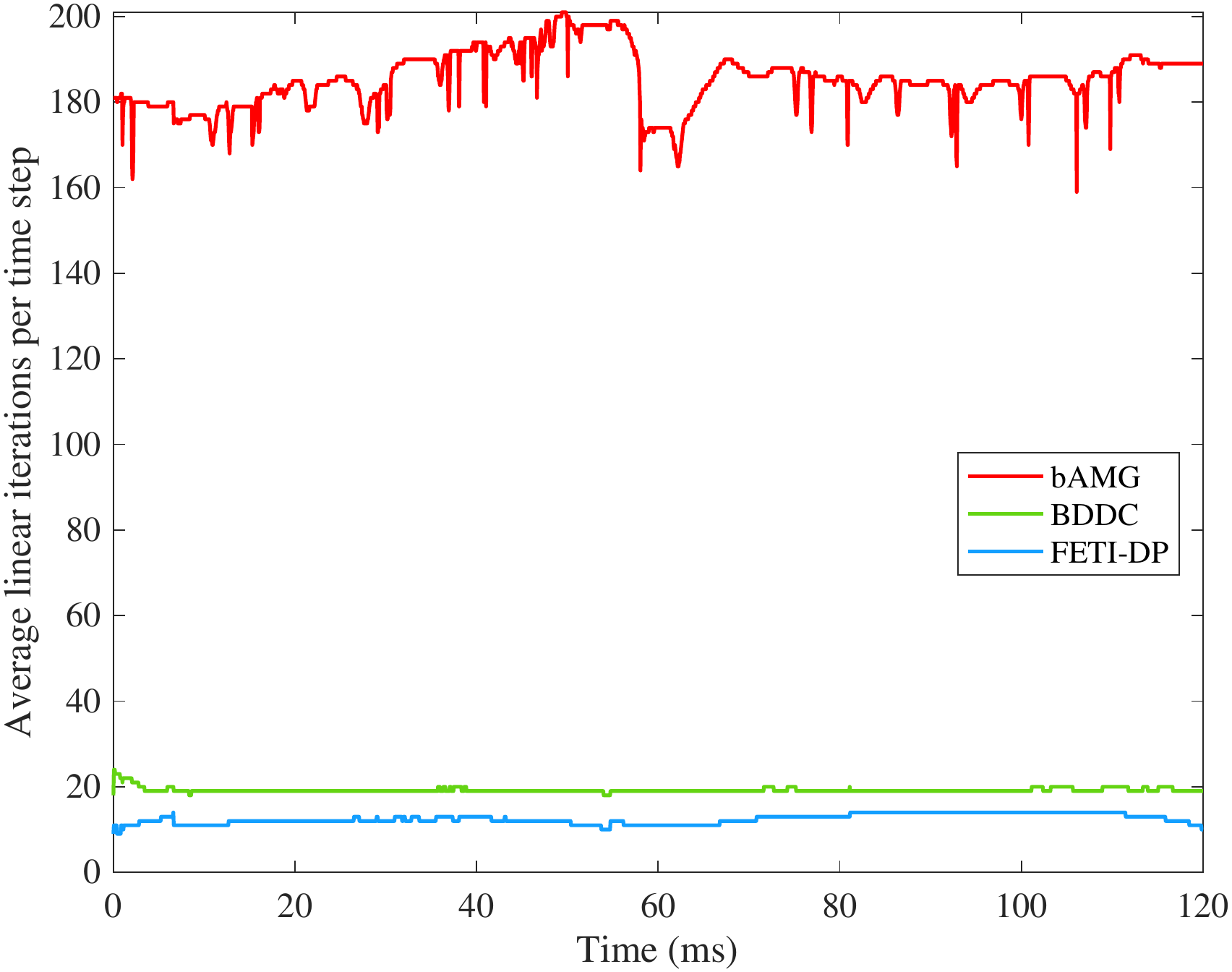}
		\ \ 
		\includegraphics[scale=.45]{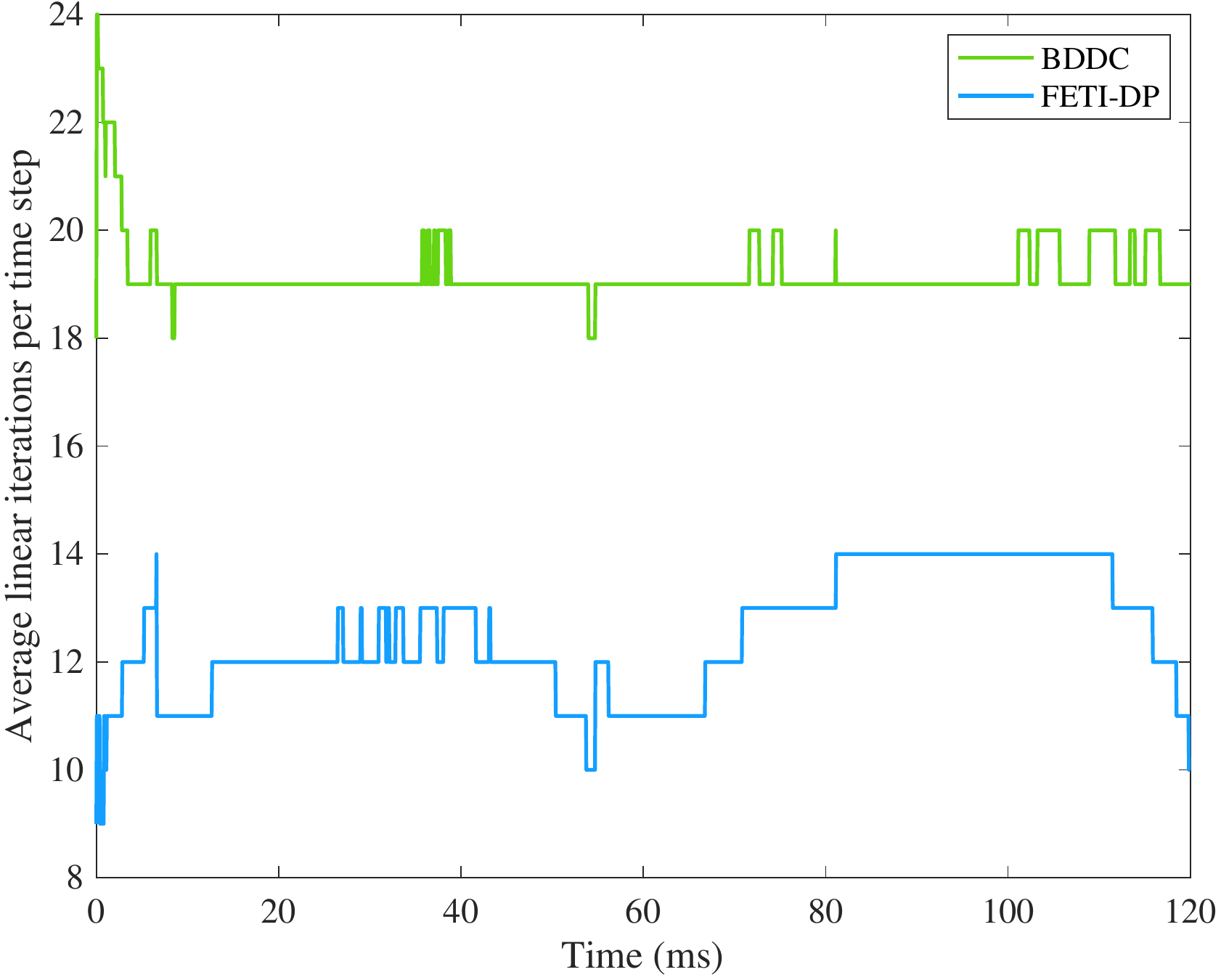}
		\caption{Whole beat simulation on slab domain, time interval $[0,120]$ms, 2400 time steps. Fixed number of subdomains $8 \cdot 8 \cdot 2$ and fixed global mesh $ 192 \cdot 96 \cdot 24$. Average number of linear iterations per time step (left), zoom over dual-primal preconditioner (right).}
		\label{fig_bido_full_slab_indaco}
	\end{figure}

	\section{Conclusions}				
	We have constructed dual-primal preconditioners for fully implicit discretizations of the Bidomain system, which are solved through a decoupling strategy. We have proved a 
	convergence bound of the preconditioned FETI-DP and BDDC Bidomain operators with deluxe scaling. Parallel numerical tests validate the bound and show the efficiency and robustness of the solver, thus enlarging the class of methods available for the efficient and accurate numerical solution of this complex biophysical reaction-diffusion model. 
	Additional research is needed in order to asses the performance of the proposed dual-primal solvers for more realistic ionic models and with respect to optimized multigrid solvers.

\end{document}